\newtheorem{theorem}{Theorem}[section]
\newtheorem{corollary}{Corollary}
\newtheorem{lemma}[theorem]{Lemma}
\newtheorem{proposition}{Proposition}[section]
\newtheorem{definition}[theorem]{Definition}
\newtheorem{remark}{Remark}
\newtheorem{example}{Example}
\newcommand{\eN}{\mathbbm{N}}
\newcommand{\N}{\mathbbm{N}}
\newcommand{\eR}{\mathbbm{R}}
\newcommand{\R}{\mathbbm{R}}
\newcommand{\eZ}{\mathbbm{Z}}
\newcommand{\Cp}[1]{\mathbf{C}^{#1}}
\newcommand{\Lp}[1]{\mathbf{L}^{#1}}
\newcommand{\Wp}[1]{\mathbf{W}^{#1}}
\newcommand{\Lip}{\mathbf{Lip}}
\newcommand{\Cc}[1]{\mathbf{C}_c^{#1}}
\newcommand{\BV}{\mathbf{BV}}
\newcommand{\Lploc}[1]{\mathbf{L}^{#1}_{loc}}
\DeclareMathOperator{\sign}{sign}
\newcommand{\eps}{\varepsilon}
\newcommand{\norma}[1]{{\left\|#1\right\|}}
\newcommand{\norm}[1]{{\left\|#1\right\|}}
\newcommand{\sgn}[1]{\mathrm{sign}\left(#1\right)}
\newcommand{\rie}{\rho_i^\eps}
\newcommand{\rhe}{\rho^\eps_{h}}
\newcommand{\rje}{\rho_j^\eps}
\begin{document}

\title[Well-posedness for a monotone solver for traffic junctions]{Well-posedness for a monotone solver\\ for traffic junctions}

\author[B.~Andreianov]{Boris~P.~Andreianov}
\address{Laboratoire de Math\'ematiques et Physique Th\'eorique CNRS UMR7350\\
Universit\'e de Tours\\
Parc de Grandmont\\
37200 Tours
France}
\email{Boris.Andreianov@lmpt.univ-tours.fr}

\author[G.~M.~Coclite]{Giuseppe Maria Coclite}
\address{Dipartimento di Matematica\\
Universit\`a degli Studi di Bari\\
Via Orabona 4\\
70125 Bari\\
Italy}
\email{giuseppemaria.coclite@uniba.it}

\author[C.~Donadello]{Carlotta Donadello}
\address{Laboratoire de Math\'ematiques CNRS UMR6623\\
Universit\'e de Franche-Comt\'e\\
16 route de Gray\\
25030 Besan{\c c}on Cedex\\
France}
\email{carlotta.donadello@univ-fcomte.fr}
\thanks{The second author is member of the Gruppo Nazionale per l'Analisi Matematica, la Probabilit\`a e le loro Applicazioni (GNAMPA) of the Istituto Nazionale di Alta Matematica (INdAM). \\
The work on this paper was supported by the French ANR project CoToCoLa.\\
 The third author also acknowlegdes support from the Universit\'e de Franche-Comt\'e (projet support EC 2014) and from the R{\'e}gion Franche-Comt\'e (projet mobilit\'e sortante a.a. 2015-16). 
 She would like to thank the Mathematics' Department of the University of Padova for the warm hospitality during the a.y. 2015-16. }

\subjclass[2010]{90B20, 35L65}

\keywords{Traffic model, networks, conservation laws.}

\maketitle

\begin{abstract}

In this paper we aim at proving well-posedness of solutions obtained as vanishing viscosity limits for the Cauchy problem on a traffic junction where $m$ incoming and $n$ outgoing roads meet. The traffic on each road is governed by a scalar conservation law $ \rho_{h,t} + f_h(\rho_h)_x  = 0$, for $h\in \{1,\ldots, m+n\}$. Our proof relies upon the complete description of the set of road-wise constant solutions and its properties, which is of some interest on its own. Then we introduce a family of Kruzhkov-type adapted entropies at the junction and state a definition of admissible solution in the same spirit as in \cite{diehl, ColomboGoatinConstraint, scontrainte, AC_transmission, germes}.
\end{abstract}

\section{Introduction}
We consider a junction consisting of $m$ incoming and $n$ outgoing roads. Incoming roads are parametrized by $x\in\eR_-$ while outgoing road by $x\in\eR_+$ in such a way that the junction is always located at  $x=0$.

We describe the evolution of traffic on each road by a scalar conservation law of the form
\begin{equation}\label{eq:basic}
  \rho_{h,t} + f_h(\rho_h)_x  = 0, \qquad\text{for } h= 1,\ldots,\, m+n,
\end{equation}
where $\rho_h$ is the density of vehicles and $f_h$ is the flux on the $h$-th road. For notational simplicity we call $\Omega_h$ the spatial domain of the density $\rho_h$. Everywhere in the paper we use the index $i$ for the $m$ incoming roads and $j$ for the $n$ outgoing roads (then $\Omega_i =\eR_-$ for all $i =1,\ldots, m$ and $\Omega_j =\eR_+$ for all $j =m+1,\ldots, m+n$), see Figure~\ref{fig:junction}.
The fluxes $f_h$, $h= 1 , \ldots, m+n$, differ in general as each road may have different maximal capacities and speed limitations. However, we assume that each flux $f_h$ is bell-shaped (unimodal), Lipschitz and non-degenerate nonlinear i.e. it satisfies the conditions
\begin{enumerate}
  \item[\textbf{(F)}]  for all $h$, $f_h \in \Lip\left( [0,R]; \eR_+ \right)$ with $\|f_h'\|_{\infty}\leq L_h$, $f_h(0) = 0 = f_h(R)$,\\
   and there exists $\bar\rho_h \in \left]0,R\right[$ such that $f_h'(\rho)~(\bar\rho_h-\rho)>0$ for a.e.~$\rho \in [0,R]$,
    \item[\textbf{(NLD)}]   for all $h$,  $f'_h$ is not constant on any non-trivial subinterval of $[0,R]$.
\
\end{enumerate}
The fundamental postulate of our approach is that any physically relevant solution of the problem has to satisfy, as minimal requirement, the conservation of the total density at the junction. The intuitive way to express this condition is to say that for a.e. $t\in\eR_+$
\begin{equation}\label{conservation}
  \sum_{i=1}^m f_i\left(\rho_i(t,0^-)\right)  = \sum_{j=m+1}^{m+n} f_j\left(\rho_j(t,0^+)\right).
\end{equation}
\begin{figure}\label{fig:junction}
  \includegraphics[width=.5\textwidth]{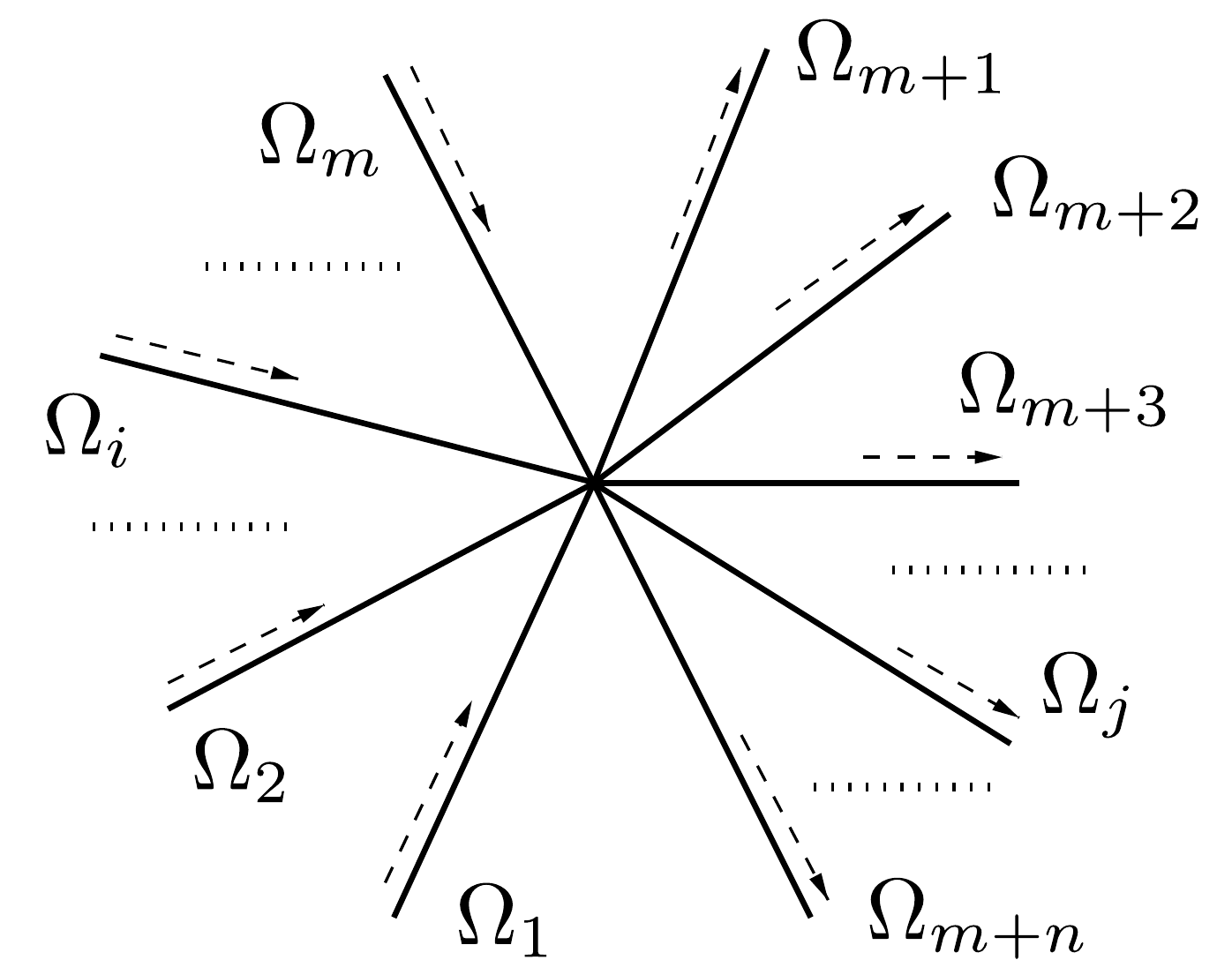}
  \caption{A junction consisting of $m$ incoming and $n$ outgoing roads.}
\end{figure}

Garavello and the second author, in \cite{coclitegaravello}, considered the Cauchy problem at the junction and established the existence of weak solutions obtained as limit of vanishing viscosity approximations. In \cite{coclitegaravellopiccoli}, uniqueness for such solutions was only proved in the special case $m=n$ and $f_h=f_{h'}$ for all $h,h'\in\{1,\ldots,m+n\}$. The present paper naturally completes those results as we obtain the uniqueness of the vanishing viscosity limit for any number of roads. Our approach relies upon a partial generalization of the recent results on scalar conservation laws with discontinuous flux obtained by the first authors and his collaborators, see in particular \cite{AC_transmission,germes}.
 Let us mention in passing that a large part of the concepts and results of \cite{AC_transmission,germes} can be generalized to conservation laws on networks. However,  a systematic generalization of the theory of $\Lp1$-dissipative germs is beyond the scope of the present paper: we focus on characterization of 
  solutions to the concrete problem \eqref{eq:basic} originating from the vanishing viscosity regularization of \cite{coclitegaravello}, and on well-posedness in this framework. Our presentation is essentially self-contained. Let us only mention that in~\cite{ImbertMonneauZidani,ImbertMonneau}, the authors provide general results, indirectly exploiting some insight from \cite{germes}, for a junction whose traffic is described by Hamilton-Jacobi equations.

\begin{remark}\label{rem:linkstoDFSCL}
  For readers acquainted with the discontinuous-flux theory, let us indicate that we characterize the admissibility of solutions at the junction in terms of the ``vanishing viscosity germ'' $\mathcal{G}_{VV}$ (cf. \cite{germes,AKR-VV}) which is introduced under the form that was put forward in \cite{AC_transmission} (Definition~\ref{definitiongerm}). Note that we give three equivalent definitions of admissible solutions, different definitions being useful for different purposes (meaning of the junction admissibility condition, proof of uniqueness, proof of existence).

We provide the interpretation of $\mathcal{G}_{VV}$ in terms of Oleinik-like inequalities of \cite{diehl} (Lemma~\ref{Oleinik}). We prove that this germ is $\Lp1$-dissipative, complete and maximal (Lemma~\ref{lem:L1D}, Lemma~\ref{complete} and Lemma~\ref{maximal}, respectively). We prove the suitable Kato inequality \eqref{eq:Kato} which leads to the $\Lp1$-contraction property of the admissible solutions (Proposition~\ref{thm:uniqueness}), stability and uniqueness.

To justify existence and the relation to the vanishing viscosity regularization of \cite{coclitegaravello}, following \cite{AudussePerthame05,BKT09,germes} we introduce a family of adapted entropies at the junction (Definition~\ref{adaptedentropysolution}). We put forward the Godunov finite volume scheme inspired by \cite{AC_transmission} and justify its convergence and existence of admissible solutions.  In addition, we link the definition of (a part of) the germ $\mathcal G_{VV}$ to the existence of vanishing viscosity profiles (Corollary~\ref{cor:profiles}) and identify the admissible solutions with vanishing viscosity limits (Theorem~\ref{VVsolareGentropysol}).
\end{remark}

A second important remark is that our uniqueness result is by no means a result on the uniqueness \emph{tout court} of solutions of the Cauchy problem on a traffic junction. It is well known in the literature that different Riemann Solvers can be used at junctions, depending on the physical situation one aims at describing, see \cite{garavello2006traffic, coclitegaravellopiccoli, HoldenRisebro95}, the recent survey \cite{BressanCanicEtAl} and references therein.
{Let us point out that the definitions and results of Section~\ref{sec:formulations} (starting from \S~\ref{ssec:Riemann}) and Section~\ref{sec:well-posed} can be adapted in a straightforward way to the study of solutions corresponding to Riemann Solvers at the junction which verify the order-preservation property (increasing the Riemann datum on any of the roads results in pointwise increase of the solution on the whole network) and the Lipschitz continuity properties of the corresponding Godunov fluxes, cf. the last paragraph of Remark~\ref{rem:junctionGodunov}. However, the order-preservation property of Riemann solvers at junctions is not satisfied by most of the models proposed in the literature.}

\subsection{Preliminaries}\label{sec:Prelim}

We assume that the reader is acquainted with the notion of entropy solution to scalar conservation laws introduced by Kruzhkov \cite{Kruzkov}. This notion
is suitable for describing admissibility of solutions to \eqref{eq:basic} away from the junction.
 But we recall, first, the formulation of the Bardos-LeRoux-N\'ed\'elec boundary condition for conservation laws in terms of
  the Godunov numerical flux, which will be instrumental for the definition of admissible solutions at the junction and for the existence proof.  Second, we recall that entropy solutions of non linearly degenerate scalar conservation laws admit boundary traces in the strong $\Lp1$ sense.

\subsubsection{Godunov's flux}\label{ssec:Godunov}
Let $u$ be  the entropy solution to the scalar conservation law with Lipschitz continuous flux
\begin{equation}\label{sclaw}
  u_t + f(u)_x =0, \qquad (t,x)\in\eR_+\times\eR
\end{equation}
corresponding to the Riemann initial condition
\begin{equation}
  u_0(x) =
  \begin{cases}
    a, & \quad \text{if } x<0, \\
    b, & \quad \text{if } x>0.
  \end{cases}
\end{equation}
One calls Godunov flux the function which associates to the couple $(a,b)$ the value $f(u(t,0^-))=f(u(t,0^+))$ (the two values are equal due to the Rankine-Hugoniot condition). The analytical expression, see for example~\cite{HoldenRisebro}, is given by
\begin{equation}
  \label{godunovflux}
G(a,b) =
\begin{cases}
  \min_{s\in[a,b]} f(s)\quad\text{if }a\leq b ,\\
  \max_{s\in[b,a]} f(s)\quad\text{if }a\geq b .
\end{cases}
\end{equation}
In the sequel, we denote by $\partial_{a} G$, resp. $\partial_b G$, the partial derivative of the Godunov flux $G$ with respect to the first, resp. to the second argument.

The Godunov flux can be used for convergent numerical approximation of \eqref{sclaw} by an explicit finite difference / finite volume scheme. This follows from the fact that $G$ satisfies the following two basic properties, shared with several other numerical fluxes as for example Rusanov and Lax-Friedrichs (see, e.g., \cite{CrandallMajda}):
\begin{itemize}
\item Consistency:  for all $a\in[0,R]$, $G(a,a) =f(a)$;
\item Monotonicity and Lipschitz continuity: There exists $L>0$ such that for all $(a,b)\in [0,R]^2$ we have
  \begin{equation}
    0\leq \partial_a G(a,b)\leq L, \qquad\qquad -L\leq \partial_b G(a,b)\leq 0.
  \end{equation}
\end{itemize}

\subsubsection{A formulation of the Bardos-LeRoux-N\'ed\'elec boundary condition}\label{ssec:BLN}
In our setting, the main interest in using Godunov flux is related to the following observation (see~\cite{DuboisLeFloch}, see also~\cite{AndreianovSbihi} for a review on this topic). Consider the initial and boundary value problem (IBVP)
\begin{equation}\label{eq:BLN-pb}
  \begin{cases}
    u_t+f(u)_x = 0,\qquad \text{for } (t,x)\:\text{in } \eR_+\times\eR_-\\
    u(t,0) = u_b (t), \\
    u(0,x) = u_0 (x),
  \end{cases}
\end{equation}
and assume that $u$ is a Kruzhkov entropy solution in the interior of the half plane $\eR_+\times\eR_-$. Then $u$ satisfies the boundary condition in the sense of Bardos-LeRoux-N\'ed\'elec (see~\cite{bardos}) if and only if its trace $\gamma u(t)= u (t, 0^-)$ satisfies $f(\gamma u(t)) = G(\gamma u(t), u_b(t))$.

\subsubsection{Strong boundary traces of local entropy solutions}\label{ssec:traces}
Consider \eqref{sclaw} locally, in $\eR_+\times (a,b)$ where $(a,b)$ is an interval of $\eR$.
Assume that $u\in \Lp\infty(\eR_+\times (a,b))$ satisfies the Kruzhkov entropy inequalities (see \cite{Kruzkov} and \eqref{entropycondition} below).
Assume that the Lipschitz flux $f$ in \eqref{sclaw} is non linearly degenerate in the sense that $f'$ is not identically zero on any interval (which follows from $(NLD)$).
Then (see~\cite{Panov}, see also \cite{germes}) the function $u(t,\cdot)$ possesses one-sided limits: e.g., one can define $u(t,b^-):=\gamma u(t)$ where
$(\gamma u)(\cdot)$ is the strong trace of $u$ on $\eR_+\times\{b\}$ in the $\Lploc1$ sense: for all $\xi\in \mathcal D(\eR_+)$,
\begin{equation}
  \label{traces-general}
  \begin{aligned}
    \lim_{k\to 0^+}\frac{1}{k}\int_{\eR_+}\int_{b-k}^b \xi(t) |u(t,x) - \gamma u(t)| \, dx\, dt=0.
    \end{aligned}
\end{equation}
Notice that this property permits to extend the above interpretation of the Bardos-LeRoux-N\'ed\'elec boundary condition for problem \eqref{eq:BLN-pb}
to the case of general $\Lp\infty$ initial and boundary data, beyond the classical $\BV$ framework.

\subsubsection{Functional framework}
Throughout the paper, we are interested in $\Lp\infty$ solutions of \eqref{eq:basic}. We will denote dy $\Gamma$ the graph pictured in Figure~\ref{fig:junction} and use the slightly abusive notation $\Lp\infty(\eR_+\times \Gamma ; [0,R]^{m+n})$ for $(m+n)$-uplets $(\rho_1,\ldots,\rho_m,\rho_{m+1},\ldots,\rho_{m+n})$ of functions such that $\rho_i\in \Lp\infty(\eR_+\times \eR_-; [0,R])$ for $i\in\{1,\ldots,m\}$ and
 $\rho_j\in \Lp\infty(\eR_+\times \eR_+ ; [0,R])$ for $j\in\{m+1,\ldots,m+n\}$. Similarly, $\Lp\infty(\Gamma ; [0,R]^{m+n})$ will denote the space of $[0,R]$-valued initial data on the graph $\Gamma$.

\subsection{The notion of admissible solution and the outline of the paper}
Our goal is to re-visit and complement the work \cite{coclitegaravello}, which studies vanishing viscosity limits for problem \eqref{eq:basic}.
The property of being a vanishing viscosity limit can be seen as a specific admissibility condition for a weak solution of \eqref{eq:basic}, which boils down to
\begin{itemize}
  \item the standard Kruzhkov entropy conditions on each of the roads $\Omega_h$, $h\in \{1,\ldots,m+n\}$;
  \item a specific ``coupling'' condition at the junction, whose description is the main object of the present paper.
\end{itemize}
An intermediate significant result of our work is the intrinsic characterization of the vanishing viscosity limits for \eqref{eq:basic}: this is done either in terms of the Riemann solver at the junction, or in terms of $m+n$ Dirichlet problems on $\Omega_h$, $h\in \{1,\ldots,m+n\}$ coupled by a simple transmission condition, or in terms od ``adapted'' entropy inequalities.

The notion of solution we aim at using is roughly speaking the following. We consider  $\vec \rho=(\rho_1, \ldots, \rho_{m+n})$ in $\Lp\infty(\eR_+\times \Gamma ; [0,R]^{m+n}) $ as an admissible solution if, first, for any $h\in\{1,\ldots, m+n\}$,  $\rho_h$ is  a weak entropy solution in the sense of Kruzhkov in the interior of $\Omega_h$. Second, recalling that $\rho_i$ (resp., $\rho_j$) admits a strong trace $\rho_i(\cdot,0^-)=\gamma_i\rho_i(\cdot)$ (resp., $\rho_j(\cdot,0^+)=\gamma_j \rho_j(\cdot)$) at $x=0$, i.e.
\begin{equation}
  \label{traces}
  \begin{aligned}
    \lim_{k\to 0^-}\frac{1}{k}\int_{\eR_+}\int_{k}^0 \xi(t) |\rho_i(t,x) - \gamma_i \rho_i(t)| \, dx\, dt=0, & \qquad \text{for } i =1,\ldots, m, \\
    \lim_{k\to 0^+}\frac{1}{k}\int_{\eR_+}\int_0^{k} \xi(t) |\rho_j(t,x) - \gamma_j \rho_j(t)| \, dx\, dt=0,&  \qquad \text{for } j =m+1,\ldots, m+n,
  \end{aligned}
\end{equation}
we require that the $(m+n)$-uple of traces satisfies condition \eqref{conservation} for a.e. $t\in\eR_+$ and, moreover,
for a.e. $t\in\eR_+$ the values of the traces ``coincide up to boundary layers''.
This choice is made in accordance with the fact that the vanishing viscosity approximation of \eqref{eq:basic} prescribes, for every viscosity parameter
$\eps>0$, the coincidence of all $\rho^\eps_h(t,\cdot)$, $h=1,\ldots,m+n$, at $x=0$; and that taking the limit $\eps\to 0$ relaxes this condition analogously to the way in which the boundary condition in \eqref{eq:BLN-pb} is relaxed.

 In order to give a more precise statement, which is the aim of this section, we need to introduce some notation.
 In Section~\ref{sec:formulations}, we will reformulate the problem in two different forms, suitable for proving the uniqueness and the existence, respectively.

\subsubsection{The junction as a collection of IBVPs}

Given an initial condition $\vec u_0 \in \Lp\infty(\Gamma ; [0,R]^{m+n})$,  $\vec u_0 = (u^0_1, \ldots, u^0_{m+n})$, we look for a function $\vec \rho=(\rho_1, \ldots, \rho_{m+n})$ in $\Lp\infty(\eR_+\times \Gamma ; [0,R]^{m+n}) $ such that for any $h\in \{1, \ldots, m+n\}$, $\rho_h$ is a weak entropy solution of the initial and boundary value problem (IBVP)
\begin{equation}\label{IBVProad}
        \begin{cases}
          \rho_{h,t} + f_h(\rho_h)_x =0, & \qquad \text{on }  ]0,T[\times\Omega_h,\\
           \rho_h(t,0) = v_h(t), & \qquad \text{on }  ]0,T[,\\
           \rho_h(0,x) = u^0_h(x), & \qquad \text{on } \Omega_h,
        \end{cases}
\end{equation}
where the set of boundary conditions $\vec v: \eR_+\to [0,R]^{m+n}$ is to be fixed in the sequel so to guarantee that, in particular, the conservativity condition \eqref{conservation} holds. Let us stress that at this point, different choices are possible, and each choice reflects a modeling assumption at
the junction.
 \begin{definition}
\label{solutionjunction}
        We say that the function $\rho_h$ is an entropy weak solution of the initial and boundary values problem \eqref{IBVProad} if
        \begin{itemize}
        \item For any test function $\xi$ in $\mathcal{D}(\eR_+\times\Omega_h ; \eR_+)$, $\xi\vert_{\partial \Omega_h} = 0$, and for any $k\in [0,R]$ there holds
            \begin{equation}
              \label{entropycondition}
              \int_{\eR_+}\int_{\Omega_h} \left\{ |\rho_h-k|\xi_t + q_h(\rho_h,k)\xi_x\right\}\,dx\,dt + \int_{ \Omega_h} |u^0_h(x)-k|\xi(0,x) \, dx\geq 0,
            \end{equation}
$q_h(u,k):=\sign(u-k)(f_h(u)-f_h(k))$ being the Kruzhkov entropy flux associated to $f_h$.
          \item For a.e. $t \in \eR_+$, $\gamma_h \rho_h(t)$ satisfies the boundary condition in the sense of Bardos-Le Roux-N\'ed\'elec (BLN), which we express under the form (cf. Section~\ref{sec:Prelim})
\begin{align}
   f_i(\gamma_i\rho_i) = G_i(\gamma_i\rho_i, v_i)&\quad\text{if } i\in \{1, \ldots, m\} ; \label{eq:God-i}\\
   f_j(\gamma_j\rho_j) = G_j(v_j, \gamma_j\rho_j)&\quad\text{if } j\in \{m+1, \ldots, m+n\} \label{eq:God-j},
\end{align}
where $G_i$ and $G_j$ are the Godunov fluxes associated to $f_i$ and $f_j$ respectively.
        \end{itemize}
      \end{definition}
 In order to describe the solutions of \eqref{eq:basic}  which can be obtained as vanishing viscosity limit, we postulate that the artificial Dirichlet values $v_h$ at the junction need to be the same for all $h$: 
 \begin{equation}\label{eq:vh-coincide}
\text{for all $h\in \{1,\ldots,m+n\}$, for a.e. $t\in\eR_+$}\;\;   v_h(t) = p(t).
 \end{equation}
We refer to \cite{AC_transmission,AM15} for detailed motivations, in the discontinuous-flux setting.
   The criterion for the choice of $p$ is the conservativity condition \eqref{conservation}; due to \eqref{eq:God-i} and \eqref{eq:God-j}, we can now express it in the form
\begin{equation}
\label{eq:defBNL}
  \sum_{i=1}^{m} G(\gamma_i\rho_i(t), p(t)) = \sum_{j=m+1}^{m+n}G(p(t), \gamma_j\rho_j(t)), \quad\text{for a.e. } t\in\eR_+.
\end{equation}
Observe that formally, \eqref{eq:vh-coincide} and \eqref{eq:defBNL} close the coupled system of IBVP's \eqref{IBVProad}.
\begin{definition}
  \label{defBLN}
Given
 an initial condition $\vec u_0\in \Lp\infty(\Gamma ; [0,R]^{m+n})$, we call $\vec \rho = (\rho_1, \ldots, \rho_{m+n})$ in $\Lp\infty \left(\eR_+\times \Gamma ; [0,R]^{m+n}\right)$ an admissible solution of \eqref{eq:basic} associated with $\vec u_0$ if there exists a function $p$ in $\Lp\infty(\eR_+; [0,R])$ such that  for any $h \in\{1,\ldots, m+n\}$ $\rho_h$ is a solution of IBVP \eqref{IBVProad} in the sense of Definition~\ref{solutionjunction} with  $v_h$, $h\in\{1,\ldots,m+n\}$ chosen to fulfill \eqref{eq:vh-coincide}, and such that $\vec \rho$, $p$ fulfill \eqref{eq:defBNL}.
\end{definition}

\subsubsection{Outline of the remaining Sections}
We will reformulate Definition~\ref{defBLN} in Section~\ref{sec:formulations}, both in terms of the  Riemann solver at the junction and in terms of adapted entropy inequalities that (unlike the ``per road'' Kruzhkov entropy inequalities \eqref{entropycondition}) account for the admissibility of $\vec \rho$ at the junction. We will establish well-posedness of problem \eqref{eq:basic} in the frame of the so defined admissible solutions in Section~\ref{sec:well-posed}. Finally, we will justify the adequacy of this definition of admissibility for intrinsic characterization of vanishing viscosity limits in Section~\ref{sec:viscosity}.

\section{Equivalent formulations of admissibility and the underlying  Riemann solver at the junction}\label{sec:formulations}
Observe that in the special case where $m=n$ and $f_h = f$ for all $h\in \{1, \ldots , 2m\}$ the constant vector function $\vec k = (k, \ldots, k)\in\eR^{2m}$ satisfies the conditions above with $p(t) = k$. This kind of stationary solution is employed in \cite{coclitegaravello} to construct a family of Kruzhkov like entropies. In general, however, other stationary solutions may
be of interest. For example in the case $m=n=1$ all vectors $\vec k = (k_1, k_2)$ such that $k_1$ and $k_2$ are respectively the left and the right state of a Kruzhkov admissible jump are admissible stationary solutions to the problem. In what follows, we introduce the vanishing viscosity germ which
 will be identified later on
 with the set of all possible stationary admissible solutions to \eqref{eq:basic} on $\eR_+\times\Gamma$ constant on each road of $\Gamma$.
 This definition will permit us to describe the Riemann solver and the associated fluxes at the junction defined in Lemma~\ref{pexists-Part2}.

\subsection{Definition of the vanishing viscosity germ}
In this section we describe the stationary admissible solutions of \eqref{eq:basic} that are constant on each road of $\Gamma$.
Because of the analogy with the discontinuous-flux setting of \cite{ AC_transmission, germes} we will use similar notation and terminology (cf. Remark~\ref{rem:linkstoDFSCL} for a brief summary).
\begin{definition}
  We call vanishing viscosity germ the subset of $[0,R]^{m+n}$ defined by
  \begin{equation}\label{definitiongerm}
    \mathcal{G}_{VV} = \left\{\begin{aligned}
    \vec u = (u_1,\ldots,u_{m+n}) : &\: \exists p\in[0,R] \:\text{such that} \\
    & \sum_{i=1}^{m} G_i(u_i, p) = \sum_{j=m+1}^{m+n}G_j(p, u_j) \\
    & \:G_i(u_i, p) = f_i(u_i), \quad G_j(p, u_j) = f_j(u_j), \:\forall i, \,j
    \end{aligned}
    \right\}.
  \end{equation}
\end{definition}
It is immediate to see that $\vec u\in \mathcal G_{VV}$ if and only if, seen as a vector function in $\eR_+\times \Gamma\to [0,R]^{m+n}$, $\vec u$ provides a solution of \eqref{eq:basic} in the sense of Definition~\ref{defBLN}
such that each component $u_h$ of $\vec u$ is constant both in $t$ and $x$.

Following \cite{germes}, see also \cite{diehl}, we can characterize $\mathcal G_{VV}$
by a set of inequalities reminiscent of the celebrated Oleinik condition for scalar conservation laws.
 Here and in the following we  use the notation $I[a,b]$ to indicate the closed interval $[\min\{a,b\}, \max\{a,b\}]$ in $\eR$.
\begin{lemma}\label{Oleinik}
 The vanishing viscosity germ $\mathcal{G}_{VV}$ coincides with the subset
 of $[0,R]^{m+n}$ defined by
  \begin{equation}\label{eq:Oleinik-like}
    \left\{\begin{aligned}
    \vec u = &(u_1,\ldots,u_{m+n}) : \: \exists p\in[0,R] \:\text{such that} \\
    & \sum_{i=1}^{m} G_i(u_i, p) = \sum_{j=m+1}^{m+n}G_j(p, u_j), \\
    & \forall s\in I[u_i,p] \quad (p-u_i)(f_i(s)-f_i(u_i))\geq 0, \:\text{for } i= 1,\ldots, m, \\
    &\forall s\in I[p,u_j] \quad (u_j-p)(f_j(s)-f_j(u_j))\geq 0, \:\text{for } j= m+1,\ldots, m+n
    \end{aligned}
    \right\}.
  \end{equation}
\end{lemma}
\begin{proof}
 Actually, the value $p$ in \eqref{eq:Oleinik-like} coincides with the value $p$ in \eqref{definitiongerm}. One only needs to show that
\[
\forall s\in I[u_i,p]\colon \quad (u_i-p)(f_i(s)-f_i(u_i))\geq 0 \Leftrightarrow G_i(u_i,p)=f_i(u_i),
\]
for any $i$. This readily comes from the definition of the Godunov flux, as the relation $G_i(u_i,p)= f_i(u_i)$ rewrites as
\begin{equation}
  \begin{aligned}
    & f_i(u_i) = \min_{s\in[u_i,p] } f_i(s), \quad\text{if } (p-u_i)\geq 0, \\
    & f_i(u_i) = \max_{s\in[p, u_i] } f_i(s), \quad\text{if } (p-u_i)\leq 0.
  \end{aligned}
\end{equation}
The proof for the $j$-index case is analogous.
\end{proof}

In order to exhibit the key properties of $\mathcal G_{VV}$, we start with the following technical lemma which is crucial for the existence theory. It relies upon the monotonicity properties of the  Godunov fluxes $G_h$, $h\in\{1,\ldots,m+n\}$; we defer to Remark~\ref{rem:junctionGodunov} for its interpretation in terms of the Godunov fluxes for the junction.
\begin{lemma}\label{pexists}
  Given $\vec u= (u_1,\ldots,u_{m+n})$ in $[0,R]^{m+n}$, consider the problem
  \begin{equation}\label{eq:pu}
   \text{find $p_{\vec u}\in[0,R]$ such that\;\;} \sum_{i=1}^{m} G_i(u_i, p_{\vec u}) = \sum_{j=m+1}^{m+n}G_j(p_{\vec u}, u_j).
  \end{equation}
  \begin{itemize}
    \item[(i)] The set $\mathcal P_{\vec u}$ of solutions of \eqref{eq:pu} is non-empty.
    \item[(ii)] The values $(G_i(u_i, p_{\vec u}))_{i\in\{1,\ldots,m\}}$, $(G_j(p_{\vec u},u_j))_{j\in\{m+1,\ldots,m+n\}}$
    do not depend on the choice of the value $p_{\vec u}\in \mathcal P_{\vec u}$.
   \end{itemize}
  \end{lemma}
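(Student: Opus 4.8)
The plan is to reduce both claims to the study of the single scalar function
\[
\Phi(p) := \sum_{i=1}^{m} G_i(u_i, p) - \sum_{j=m+1}^{m+n} G_j(p, u_j), \qquad p\in[0,R],
\]
whose zeros are precisely the elements of $\mathcal P_{\vec u}$. Two structural properties of $\Phi$ will carry the whole argument. First, since each Godunov flux $G_h$ is Lipschitz continuous, $\Phi$ is continuous on $[0,R]$. Second, by the monotonicity of the Godunov flux recalled in \S\ref{ssec:Godunov}, the map $p\mapsto G_i(u_i,p)$ is non-increasing (because $\partial_b G_i\leq 0$) while $p\mapsto G_j(p,u_j)$ is non-decreasing (because $\partial_a G_j\geq 0$); hence $\Phi$, being a sum of $m+n$ non-increasing functions of $p$, is itself non-increasing.

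For part (i) I would evaluate $\Phi$ at the endpoints using hypothesis $(F)$. At $p=0$, the identity $G_j(0,u_j)=\min_{s\in[0,u_j]}f_j(s)=0$ (attained at $s=0$ since $f_j\geq 0$ and $f_j(0)=0$) gives $\Phi(0)=\sum_i G_i(u_i,0)\geq 0$, the summands being nonnegative as $G_i(u_i,0)=\max_{s\in[0,u_i]}f_i(s)\geq 0$. Symmetrically, at $p=R$ the relation $G_i(u_i,R)=\min_{s\in[u_i,R]}f_i(s)=0$ (since $f_i(R)=0$, $f_i\geq 0$) yields $\Phi(R)=-\sum_j G_j(R,u_j)\leq 0$. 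Since $\Phi$ is continuous with $\Phi(0)\geq 0\geq\Phi(R)$, the intermediate value theorem produces a zero $p_{\vec u}\in[0,R]$, so $\mathcal P_{\vec u}\neq\emptyset$. In fact, being the zero set of a continuous non-increasing function, $\mathcal P_{\vec u}$ is a closed subinterval of $[0,R]$.

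For part (ii) I would take any two $p_1<p_2$ in $\mathcal P_{\vec u}$; since $\Phi$ is non-increasing with $\Phi(p_1)=\Phi(p_2)=0$, it vanishes identically on $[p_1,p_2]$. Writing $\Phi$ as a sum of the $m+n$ non-increasing functions $p\mapsto G_i(u_i,p)$ and $p\mapsto -G_j(p,u_j)$, the key elementary observation is that a finite sum of non-increasing functions which is constant on an interval forces each summand to be constant there: for $p_1\leq s<t\leq p_2$ every difference $G_i(u_i,s)-G_i(u_i,t)$ and $G_j(t,u_j)-G_j(s,u_j)$ is nonnegative, yet they sum to $\Phi(s)-\Phi(t)=0$, so each of them vanishes. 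Consequently each $G_i(u_i,\cdot)$ and each $G_j(\cdot,u_j)$ is constant on $\mathcal P_{\vec u}$, which is exactly the assertion.

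The argument is short, and the only genuinely delicate point is part (ii): the conclusion is not merely that $\Phi$ is constant (which is immediate), but that \emph{each individual} flux value is independent of the choice of $p_{\vec u}$, and this rests precisely on the fact that every summand has the same monotonicity sign. The boundary evaluations in part (i) also merit care, since they use both $f_h\geq 0$ and the vanishing of $f_h$ at the endpoints $0$ and $R$ from $(F)$; without this sign information the intermediate value argument would not close.
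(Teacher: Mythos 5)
Your proof is correct and follows essentially the same route as the paper: the paper works with $\Phi^{in}_{\vec u}-\Phi^{out}_{\vec u}$ (your $\Phi$), obtains existence from the same endpoint evaluations $\Phi^{in}_{\vec u}(0)\geq 0=\Phi^{out}_{\vec u}(0)$, $\Phi^{in}_{\vec u}(R)=0\leq\Phi^{out}_{\vec u}(R)$ together with continuity and the intermediate value theorem, and deduces (ii) from the observation that all summands share the same monotonicity, so they must each be constant on the interval $\mathcal P_{\vec u}$. Your write-up merely makes explicit the elementary step (a sum of non-increasing functions that is constant forces each summand to be constant) that the paper states in one line.
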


  \begin{example}
   To give an example, consider a junction consisting of two incoming and one outgoing roads, on which the traffic is described through the following flux functions:
   \begin{equation}
     f_h(\rho) = -h\rho^2+h, \qquad \text{for } h = 1,2,3.
   \end{equation}
Remark that, for the reader's convenience, we consider now $\rho\in[-1,1]$, $f_h (-1) = f_h(1) = 0$ and $\bar \rho_h = 0$. This does not change our results but allows for cleaner computations.
As above we call $G_h$ the Godunov flux corresponding to $f_h$ and $u_h$ is the constant initial condition on the $h$-th road. If $u_h\neq 0$, let $\hat u_h\neq u_h$ be the only solution to $f_h(u_h) = f_h(\hat u_h)$.
By using the standard Riemann Solver, see~\cite{HoldenRisebro}, it is easy to check that the values of $G_i(u_i,\cdot)$, $i= 1,\,2$, as functions of $p$, are the following
\begin{itemize}
\item If $u_i\leq 0$, then $G_i(u_i,p) = f_i(u_i)$ for all $p \leq \hat u_i $ and $G_i(u_i,p) = f_i(p)$ for all $p \geq \hat u_i $;
\item If $u_i\geq 0$, then $G_i(u_i,p) = f_i(0)$ for all $p \leq 0 $ and $G_i(u_i,p) = f_i(p)$ for all $p \geq 0$.
\end{itemize}
Similarly, the values of $G_3(\cdot, u_3)$, as a function of $p$ are
\begin{itemize}
\item If $u_3\leq 0$, then $G_3(p, u_3) = f_3(p)$ for all $p \leq 0$ and $G_3(p, u_3) = f_3(0)$ for all $p \geq 0 $;
\item If $u_3\geq 0$, then $G_3(p, u_3) = f_3(p)$ for all $p \leq \hat u_3$ and $G_3(p, u_3) = f_3(u_3)$ for all $p \geq \hat u_3 $.
\end{itemize}
One can check that if we take, as an example, $u_1 = -\sqrt{1/2}$, $u_2= 1/4$ and $u_3 = \sqrt{1/6}$, then all the values of $p$ between $[-\sqrt{1/6}, 0]$ satisfy the relation
\begin{equation}
  G_1(u_1,p) + G_2(u_2, p) = G_3(p,u_3),
\end{equation}
and that for all these values of $p$, the collection $\vec G^*(\vec u)$ of fluxes at the junction will be given by
$$
\vec G^*(\vec u)=\Bigl( G_1(u_1,p), G_2(u_2,p), G_3(p,u_3)\Bigr)=(1/2,2,5/2).
$$
Remark that, as explained in the paper \cite{lebacque1996godunov}, the functions $G_i(u_i, \cdot)$ and $G_j(\cdot, u_j)$ are closely related to the equilibrium supply/demand functions introduced in the work by Lebacque and his collaborators. In particular we have that the equilibrium demand function $\Delta_i$ of the $i$-th incoming road  and the equilibrium supply function $\Sigma_j$ of the $j$-th outgoing road can be defined as
\begin{equation}
  u_i\mapsto \Delta_i(u_i) = \max_p \{G_i (u_i, p)\}, \qquad u_j\mapsto \Sigma_j (u_j) = \max_p \{G_j(p, u_j)\}.
\end{equation}
  \end{example}

Now, we prove Lemma~\ref{pexists}.
\begin{proof}
~\\
(i)  Given $\vec u$ we define the functions $\Phi^{in}_{\vec u}$ and $\Phi^{out}_{\vec u}$ from $[0,R]$ to $\eR$ by
  \begin{equation}
    \label{phis}
\Phi^{in}_{\vec u}:p\mapsto \sum_{i=1}^{m} G_i(u_i, p), \qquad \Phi^{out}_{\vec u}:p\mapsto \sum_{j=m+1}^{m+n}G_j(p, u_j).
  \end{equation}
A quick direct calculation gives us
\begin{equation}
  \begin{aligned}
G_i(u_i, 0) = \max_{s\in[0,u_i]} f_i(s)\geq 0 , &\quad G_i(u_i, R) =\min_{s\in[u_i,R]} f_i(s)=0,\\
  G_j(0,u_j) = \min_{s\in[0,u_j]} f_j(s) =0 , &\quad G_j(R,u_j) =\max_{s\in[u_j,R]} f_i(s)\geq 0,
  \end{aligned}
\end{equation}
so that
\begin{equation}
  \Phi^{in}_{\vec u}(0)\geq 0=\Phi^{out}_{\vec u}(0) \quad\text{and } \Phi^{in}_{\vec u}(R)=0\leq \Phi^{out}_{\vec u}(R).
\end{equation}
The existence of at least one solution $p_{\vec u}$ of \eqref{eq:pu} is ensured by the continuity of $\Phi^{in}_{\vec u} - \Phi^{out}_{\vec u}$ on $[0,R]$ and the intermediate value theorem.

\smallskip
\noindent
(ii) It may happen that there exist several values of $p$ such that $\Phi^{in}_{\vec u}(p)$ and $\Phi^{out}_{\vec u}(p)$ coincide.
From the Lipschitz continuity and monotonicity property of Godunov flux (see \S~\ref{ssec:Godunov}) we have
\begin{equation}\label{monotonePhi}
\forall i\in\{1,\ldots,m\}\;\; \partial_b G_i(u_i, p) \leq 0,\;\;
\forall j\in\{m+1,\ldots,m+n\}\;\; \partial_a G_j(p, u_j) \geq 0,
\end{equation}
which means in particular that $\Phi^{in}_{\vec u} - \Phi^{out}_{\vec u}$ is non-strictly decreasing. Therefore, the set $\mathcal P_{\vec u}$ of solutions of \eqref{eq:pu} is a closed sub-interval of $[0,R]$. Since, moreover, each term of the sums defining $\Phi^{in}_{\vec u}-\Phi^{out}_{\vec u}$ has the same monotonicity,
we find that all these terms are constant on $\mathcal P_{\vec u}$.
\end{proof}

Next, consider the map $\vec G^*:[0,R]^{m+n}\mapsto \eR^{m+n}$ which is well defined, thanks to (ii):
   \begin{equation}\label{eq:junctionsolver-def}
\vec G^*(\vec u)=\Bigl(G_1^*(\vec u),\ldots,G_{m+n}^*(\vec u)\Bigr),\;\;   \begin{array}{l}
     G^*_i(\vec u):=G_i(u_i, p_{\vec u}),\; i\in\{1,\ldots,m\},\\[2pt]
     G^*_j(\vec u):=G_j(p_{\vec u},u_j),\; j\in\{m+1,\ldots,m+n\}
   \end{array}
   \end{equation}
   and the map $F^*:[0,R]^{m+n}\mapsto \eR$ defined by
   \begin{equation}\label{defF}
  F^*(\vec u):=\sum_{i=1}^{m} G^*_i(\vec u)\equiv \sum_{j=m+1}^{m+n}G^*_j(\vec u).
\end{equation}
\begin{lemma}\label{pexists-Part2}
With the above definitions, the following properties hold.
  \begin{itemize}
    \item[(i)] For each $i\in\{1,\ldots,m\}$, the map $\vec u\mapsto G^*_i(\vec u)$ fulfills
    $$
    \partial_{u_i} G^*_i \leq L_i\;\;\;\text{and}\;\;\; \forall h\in\{1,\ldots,m+n\},\, h\neq i\colon\;\; \partial_{u_h} G^*_i \leq 0.
    $$
     For each $j\in\{m+1,\ldots,m+n\}$, the map $\vec u\mapsto G^*_j(\vec u)$ fulfills
     $$
  \partial_{u_j} G^*_j \geq - L_j  \;\;\;\text{and}\;\;\; \forall h\in\{1,\ldots,m+n\},\, h\neq j\colon \;\; \partial_{u_h} G^*_j \geq 0.
    $$
    \item[(ii)] The map $\vec u\mapsto F^*(\vec u)$ fulfills
    $$
    \forall i\in\{1,\ldots,m\}\colon\;\; \partial_{u_i} F^* \geq 0 \;\;\;\text{and}\;\;\;
    \forall j\in\{m+1,\ldots,m+n\}\colon\;\; \partial_{u_j} F^* \leq 0.
    $$
  \end{itemize}
\end{lemma}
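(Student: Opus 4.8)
The plan is to reduce both claims to two ingredients: a monotone dependence of the crossing value $p_{\vec u}$ on the datum $\vec u$, and the monotonicity and Lipschitz bounds of the individual Godunov fluxes recalled in \S~\ref{ssec:Godunov}. The essential trick, which circumvents the main obstacle (that $p_{\vec u}$ is in general neither unique nor regular), is to work exclusively with one monotone \emph{selection} out of the solution interval and to read off all the bounds from one-sided difference quotients, so that $p_{\vec u}$ itself is never differentiated. Set $H(\vec u, p) := \Phi^{in}_{\vec u}(p) - \Phi^{out}_{\vec u}(p)$. By \eqref{monotonePhi} the map $p\mapsto H(\vec u,p)$ is non-increasing, and by the boundary values computed in the proof of Lemma~\ref{pexists} one has $H(\vec u,0)\geq 0\geq H(\vec u,R)$, so $\mathcal P_{\vec u}=\{H(\vec u,\cdot)=0\}=:[p^-_{\vec u},p^+_{\vec u}]$ is a non-empty compact interval; by Lemma~\ref{pexists}(ii) the values $G^*_h(\vec u)$ are independent of the point of $\mathcal P_{\vec u}$ chosen, hence I am free to evaluate them at $p^-_{\vec u}:=\min\mathcal P_{\vec u}$.

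First I would prove that $p^-_{\vec u}$ (and likewise $p^+_{\vec u}$) is non-decreasing in every component $u_h$. Raising a single component $u_h$ raises $H(\vec u,\cdot)$ pointwise on $[0,R]$: if $h$ is incoming this is because $\partial_a G_h\geq 0$, if $h$ is outgoing because $\partial_b G_h\leq 0$ enters $H$ with a minus sign. Comparing two non-increasing functions $H\leq H'$ that both vanish somewhere then forces their zero sets to be ordered: for $p<p^-_{\vec u}$ one has $H(p)>0$ (as $H$ is non-increasing, vanishes at $p^-_{\vec u}$, and $p$ lies left of the minimal zero), whence $H'(p)\geq H(p)>0$ and so $p<p^-_{\vec u'}$; this gives $p^-_{\vec u}\leq p^-_{\vec u'}$, and symmetrically for $p^+$.

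With this monotonicity in hand, claim (i) is a finite-difference computation. Fix an incoming index $i$ and let $\vec u'$ differ from $\vec u$ only in a component $h$ with $u'_h\geq u_h$; write $p=p^-_{\vec u}$ and $p'=p^-_{\vec u'}\geq p$. If $h\neq i$ then $G^*_i(\vec u')-G^*_i(\vec u)=G_i(u_i,p')-G_i(u_i,p)\leq 0$ since $G_i$ is non-increasing in its second argument, giving $\partial_{u_h}G^*_i\leq 0$. If $h=i$ I split $G^*_i(\vec u')-G^*_i(\vec u)=\bigl[G_i(u'_i,p')-G_i(u_i,p')\bigr]+\bigl[G_i(u_i,p')-G_i(u_i,p)\bigr]$, where the first bracket is $\leq L_i(u'_i-u_i)$ by $\partial_a G_i\leq L_i$ and the second is $\leq 0$ as before; dividing by $u'_i-u_i\to 0^+$ yields $\partial_{u_i}G^*_i\leq L_i$. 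The statements for an outgoing index $j$ are symmetric, using that $G_j$ is non-decreasing in its first argument (so $\partial_{u_h}G^*_j\geq 0$ for $h\neq j$) and the lower bound $\partial_b G_j\geq -L_j$ (so $\partial_{u_j}G^*_j\geq -L_j$).

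Finally, claim (ii) uses the \emph{two} equivalent expressions of $F^*$ in \eqref{defF}. To bound $\partial_{u_i}F^*$ from below I use $F^*=\sum_{j} G_j(p^-_{\vec u},u_j)$: increasing an incoming datum $u_i$ leaves every $u_j$ fixed while raising $p^-_{\vec u}$, and each $G_j$ is non-decreasing in its first argument, so $F^*$ does not decrease, i.e. $\partial_{u_i}F^*\geq 0$; dually, $F^*=\sum_i G_i(u_i,p^-_{\vec u})$ together with $p^-_{\vec u}$ non-decreasing in $u_j$ and $G_i$ non-increasing in its second argument gives $\partial_{u_j}F^*\leq 0$. (Equivalently one may note $F^*(\vec u)=\min_{p\in[0,R]}\max\bigl(\Phi^{in}_{\vec u}(p),\Phi^{out}_{\vec u}(p)\bigr)$ and read the monotonicity off directly.) Each inequality above is exactly the monotonicity of a one-variable function obtained by subtracting an affine term, so the stated partial derivatives exist a.e.\ (differentiability of monotone functions along each line, plus Fubini) and satisfy the asserted bounds; the only real difficulty, the irregularity of $p_{\vec u}$, has been absorbed into the selection-independence of Lemma~\ref{pexists}(ii).
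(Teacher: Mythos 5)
Your proof is correct and follows essentially the same route as the paper's: fix the selection $p_{\vec u}=\min\mathcal P_{\vec u}$, prove that this selection is non-decreasing in every component $u_h$, then obtain (i) by the identical two-term splitting $G_i(u_i',p')-G_i(u_i,p')+G_i(u_i,p')-G_i(u_i,p)$ and (ii) by playing the two equivalent expressions of $F^*$ in \eqref{defF} against the monotonicity of $p_{\vec u}$. The only difference is cosmetic: your ordering argument for the minimal zeros (strict positivity of $H(\vec u,\cdot)$ to the left of its first zero, transferred to $H(\vec u',\cdot)$) spells out a step the paper states more tersely, which is a welcome clarification rather than a deviation.
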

The above differential inequalities should be understood in the sense of distributions, e.g., ``$\partial_{u_i} F^* \geq 0$'' means that $F^*$ is non-decreasing in the variable $u_i$.
\begin{proof}~\\
(i) Without loss of generality, we can fix the normalization $p_{\vec u}:=\min \mathcal P_{\vec u}$. Observe that
\begin{equation}\label{eq:p(u)-monotonicity}
  \text{the map $\vec u\mapsto p_{\vec u}$ is monotone non-decreasing in each component $u_h$, $h\in\{1,\ldots,m+n\}$}.
\end{equation}
Indeed, let $\vec u \leq \vec v$ in the sense of the per component order: for all $h\in\{1,\ldots,m+n\}$ $u_h\leq v_h$. Bearing in mind formula \eqref{phis}, the monotonicity properties of the Godunov fluxes $G_i(\cdot,p)$ and $G_j(p,\cdot)$ and the definition of $p_{\vec u}$ yield
$$
0=\Phi^{in}_{\vec u}(p_{\vec u}) - \Phi^{out}_{\vec u}(p_{\vec u})\leq  \Phi^{in}_{\vec v}(p_{\vec u}) - \Phi^{out}_{\vec v}(p_{\vec u}).
$$
Hence the monotonicity of $\Phi^{in}_{\vec v} - \Phi^{out}_{\vec v}$ exhibited in the proof of Lemma~\ref{pexists}(ii) and the normalization of $p_{\vec u}$ ensure that $p_{\vec u}\leq p_{\vec v}$, which proves \eqref{eq:p(u)-monotonicity}.

Now the monotonicity claims of (i) are immediate from the monotonicity properties of $G_h$ and from \eqref{eq:p(u)-monotonicity}. To prove the one-sided Lipschitz continuity properties of  $G^*_h$ claimed in (i), let us focus on $h=1$. The other cases are proved in the same way. Given $\vec u=(u_1,u_2,\ldots,u_{m+n})$ and $\vec v=(v_1,u_2,\ldots,u_{m+n})$ with $v_1>u_1$, we have
\begin{align*}
 G^*_1(\vec v)-G^*_1(\vec u) & = G_1(v_1,p_{\vec v})- G_1(u_1,p_{\vec u})\\
 & = G_1(v_1,p_{\vec v})- G_1(u_1,p_{\vec v}) + G_1(u_1,p_{\vec v})- G_1(u_1,p_{\vec u})\\
 & \leq G_1(v_1,p_{\vec v})- G_1(u_1,p_{\vec v}) \leq L_1(v_1-u_1),
\end{align*}
which proves the claim.

\noindent
(ii) The monotonicity properties of $F^*$ readily stem from \eqref{eq:p(u)-monotonicity}. If we look at the dependence of $F^*$ in $u_i$, $i\in\{1,\ldots,m\}$, it is enough to represent $F^*$ with the last expression in \eqref{defF} and combine \eqref{eq:p(u)-monotonicity} with the monotonicity of $G_j(\cdot,b)$, $j\in \{m+1,\ldots,m+n\}$. If we look at the dependence of $F^*$ in $u_j$, $j\in\{m+1,\ldots,m+n\}$, then we  represent $F^*$ with the first expression in \eqref{defF} and use  the monotonicity of $G_i(a,\cdot)$, $i\in \{1,\ldots,m\}$. 
\end{proof}

\begin{remark}
  Actually, in the context of Lemma~\ref{pexists-Part2}(i) we can also prove that $G^*_i$ (resp., $G^*_j$) is monotone non-decreasing (resp., non-increasing) in the argument $u_i$ (resp., $u_j$), and therefore it is $L_i$ (resp., $L_j$) Lipschitz continuous. Indeed, it is enough to represent, e.g.,
  $G^*_1$ by the following expression derived from \eqref{eq:pu}:
  $$
  G^*_1(p_{\vec u})=-\sum_{i=2}^m G_i(u_i,p_{\vec u}) + \sum_{j=m+1}^{m+n} G_j(p_{\vec u},u_j),
  $$
  and exploit \eqref{eq:p(u)-monotonicity}.
However, we do not stress these finer properties of dependence of $G^*_i$ on $u_i$ because they are not essential for the subsequent analysis.
\end{remark}

\smallskip
Now, we are ready to explore the crucial ``dissipativity'' properties of $\mathcal G_{VV}$.
For any $h\in \{1,\ldots, m+n\}$ let $q_h: [0,R]^2\to \eR$ denote the Kruzhkov entropy flux
\begin{equation}
  q_h(u,v) = \sign(u-v)(f_h(u)-f_h(v)).
\end{equation}
\begin{lemma}\label{lem:L1D}
  For any $\vec k_1 $, $\vec k_2$ in $\mathcal{G}_{VV}$  with $\vec k_\ell = (k_1^\ell, \ldots, k_{m+n}^\ell)$, $\ell = 1, \, 2$, there holds
  \begin{equation}\label{toward_kato}
  \Delta(\vec k_1,\vec k_2):= \sum_{i=1}^{m} q_i(k_i^1,k_i^2) - \sum_{j=m+1}^{m+n}q_j(k_j^1,k_j^2)\geq 0.
  \end{equation}
\end{lemma}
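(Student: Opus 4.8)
The plan is to reduce to a convenient ordering of the two germ parameters, rewrite the Kruzhkov flux in a form adapted to a componentwise max/min splitting, and then control each half by the monotonicity of the Godunov fluxes together with the germ relations.

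\smallskip\noindent\textbf{Setup and reduction.} Since $\vec k_1,\vec k_2\in\mathcal G_{VV}$, I first fix witnesses $p_1,p_2\in[0,R]$ as in \eqref{definitiongerm}, so that $f_i(k_i^\ell)=G_i(k_i^\ell,p_\ell)$, $f_j(k_j^\ell)=G_j(p_\ell,k_j^\ell)$, and the conservation relation $\sum_i f_i(k_i^\ell)=\sum_j f_j(k_j^\ell)=:F_\ell$ holds for $\ell=1,2$. Because each $q_h$ is symmetric in its two arguments, $\Delta(\vec k_1,\vec k_2)=\Delta(\vec k_2,\vec k_1)$; relabeling if necessary, I may assume $p_1\le p_2$.

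\smallskip\noindent\textbf{The key identity and splitting.} The device I would use is the elementary identity $q_h(a,b)=f_h(a\vee b)-f_h(a\wedge b)$, where $a\vee b:=\max\{a,b\}$ and $a\wedge b:=\min\{a,b\}$ (immediate, since $q_h(a,b)=q_h(a\vee b,a\wedge b)$). Writing $k_h^\vee:=k_h^1\vee k_h^2$ and $k_h^\wedge:=k_h^1\wedge k_h^2$, this turns $\Delta(\vec k_1,\vec k_2)$ into the sum of
\[
B_1:=\sum_{i=1}^m f_i(k_i^\vee)-\sum_{j=m+1}^{m+n} f_j(k_j^\vee),
\qquad
B_2:=\sum_{j=m+1}^{m+n} f_j(k_j^\wedge)-\sum_{i=1}^m f_i(k_i^\wedge),
\]
and I would prove $B_1\ge 0$ and $B_2\ge 0$ separately. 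The point worth stressing, and the genuine difficulty, is that the naive roadwise bound $q_i(k_i^1,k_i^2)\ge f_i(k_i^1)-f_i(k_i^2)$ is \emph{false} (a free incoming state and a congested one give an easy counterexample), so the inequality cannot be split road by road; the max/min regrouping is exactly what makes the coupling through conservation usable.

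\smallskip\noindent\textbf{Bounding $B_1$.} For each incoming $i$, if $k_i^\vee=k_i^2$ then $f_i(k_i^\vee)=G_i(k_i^\vee,p_2)$ by the germ relation; if $k_i^\vee=k_i^1\ (\ge k_i^2)$ then $f_i(k_i^\vee)=G_i(k_i^1,p_1)\ge G_i(k_i^1,p_2)=G_i(k_i^\vee,p_2)$, using that $G_i$ is non-increasing in its second argument and $p_1\le p_2$ (cf. \S~\ref{ssec:Godunov}). Summing gives $\sum_i f_i(k_i^\vee)\ge \Phi^{in}_{\vec k^\vee}(p_2)$, with $\Phi^{in},\Phi^{out}$ as in \eqref{phis}. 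Symmetrically, using that $G_j$ is non-decreasing in its first argument, $\sum_j f_j(k_j^\vee)\le \Phi^{out}_{\vec k^\vee}(p_2)$. Finally, since $\vec k^\vee\ge \vec k_2$ componentwise, the germ relations of $\vec k_2$ at $p_2$ and the same monotonicities yield $\Phi^{in}_{\vec k^\vee}(p_2)=\sum_i G_i(k_i^\vee,p_2)\ge\sum_i G_i(k_i^2,p_2)=F_2$ and $\Phi^{out}_{\vec k^\vee}(p_2)\le\sum_j G_j(p_2,k_j^2)=F_2$. Chaining these, $B_1\ge \Phi^{in}_{\vec k^\vee}(p_2)-\Phi^{out}_{\vec k^\vee}(p_2)\ge F_2-F_2=0$.

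\smallskip\noindent\textbf{$B_2$ and conclusion.} The term $B_2$ is handled in the same way with the roles of $p_1/p_2$, of $\vee/\wedge$, and of the incoming/outgoing monotonicities interchanged: one shows $\sum_i f_i(k_i^\wedge)\le \Phi^{in}_{\vec k^\wedge}(p_1)$ and $\sum_j f_j(k_j^\wedge)\ge \Phi^{out}_{\vec k^\wedge}(p_1)$, and then, since $\vec k^\wedge\le \vec k_1$ componentwise, the germ of $\vec k_1$ at $p_1$ gives $\Phi^{out}_{\vec k^\wedge}(p_1)\ge F_1\ge \Phi^{in}_{\vec k^\wedge}(p_1)$, whence $B_2\ge 0$. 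Adding the two estimates yields $\Delta(\vec k_1,\vec k_2)\ge 0$. Apart from the conceptual step of choosing the max/min regrouping and comparing each regrouped configuration with the \emph{opposite} solution's germ value $F_2$ (resp.\ $F_1$), the rest is routine bookkeeping with the monotonicity of the Godunov fluxes; notably, the argument does not even require the explicit Oleinik description of Lemma~\ref{Oleinik}, only the relations \eqref{definitiongerm} and the sign of $\partial_a G$, $\partial_b G$.
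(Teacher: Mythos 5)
Your proof is correct, and it takes a genuinely different route from the paper's. The paper argues by a three-way case analysis on the sign pattern $s_h=\sign(k_h^1-k_h^2)$; its general case introduces the componentwise minimum $\vec w$ and compares everything against the coupling value $p_{\vec w}$, which requires the solvability of \eqref{eq:pu} (Lemma~\ref{pexists}) and the monotonicity \eqref{eq:p(u)-monotonicity} of $\vec u\mapsto p_{\vec u}$ from Lemma~\ref{pexists-Part2}. You avoid all of that: after ordering the witnesses $p_1\le p_2$ (legitimate, since $q_h$ and hence $\Delta$ are symmetric), the identity $q_h(a,b)=f_h(a\vee b)-f_h(a\wedge b)$ splits $\Delta$ into the two conservation defects $B_1$, $B_2$ of the max and min configurations, and each is controlled using only the germ relations \eqref{definitiongerm} at $p_2$ (resp.\ $p_1$) and the elementary signs of $\partial_a G$, $\partial_b G$ --- every step checks out, including the two chains $\sum_i f_i(k_i^\vee)\ge\Phi^{in}_{\vec k^\vee}(p_2)\ge F_2\ge \Phi^{out}_{\vec k^\vee}(p_2)\ge \sum_j f_j(k_j^\vee)$ and its mirror image for $B_2$. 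What your version buys is brevity and self-containedness: no case analysis, no need to solve the coupling equation for any auxiliary state, hence no dependence on Lemmas~\ref{pexists} and~\ref{pexists-Part2}. What the paper's version buys is that its machinery ($p_{\vec u}$, $F^*$, $\vec G^*$ and their monotonicity) is exactly what gets reused for the Riemann solver and the finite volume scheme; note also that your $\vee/\wedge$ regrouping is the continuous analogue of the $\top/\bot$ manipulations in the discrete Kato inequality of Proposition~\ref{prop:discreteKato}, so your argument fits naturally into the paper's broader framework even though the paper does not use it for this lemma.
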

 Inequality \eqref{toward_kato} will be exploited in this work to prove that the solutions we consider satisfy a generalized Kato's inequality. The $\Lp1$-contraction property and uniqueness will follow.
 \begin{proof}
To increase readability we use the shorter notation $s_h= \sign(k_h^1-k_h^2)$, for $h = 1, \ldots, m+n$, and we adopt the convention $\sign(0)=0$. Up to reordering the incoming and outgoing roads we can assume that $s_i \geq 0 $ for $i\in\{1,\ldots, \alpha\}$, $s_i=-1$ for $i=\alpha+1$,  $s_i\leq 0$ for $i\in\{\alpha+1,\ldots, m\}$, $s_j \geq 0 $ for $j\in\{m+1,\ldots, \beta\}$, $s_j=-1$ for $i=\beta+1$ and $s_i\leq 0$ for $i\in\{\beta+1,\ldots, m+n\}$.

Intuitively we deal with three cases. Actually, the third case is more general and includes the first two.
   \begin{description}
     \item[Case 1] Assume that $s_i s_j\geq 0$ for all $(i,j)\in\{1,\ldots, m\}\times \{m+1,\ldots, m+n\}$.
\item[Case 2] Assume that $s_i s_j\leq 0$ for all $(i,j)\in\{1,\ldots, m\}\times \{m+1,\ldots, m+n\}$.
\item[Case 3] Let $\alpha\neq 1,\, m$ and $\beta\neq m+1, \,  m+n$.
   \end{description}

\vspace{0.2cm}

\noindent \textbf{Case 1} This is the easiest case. The definition of $\mathcal{G}_{VV}$ implies that
\begin{equation}
  \begin{aligned}
&    \sum_{i=1}^{m} q_i(k_i^1,k_i^2) - \sum_{j=m+1}^{m+n}q_j(k_j^1,k_j^2) = \pm\sum_{i=1}^{m} \left(f_i(k_i^1) - f_i(k_i^2)\right) \mp \sum_{j=m+1}^{m+n} \left(f_j(k_j^1) - f_j(k_j^2)\right) \\
&= \pm\sum_{i=1}^{m}G_i(k_i^1, p_1) \mp \sum_{j=m+1}^{m+n}G_j(p_1, k_j^1) \mp\sum_{i=1}^{m}G_i(k_i^2, p_2) \pm \sum_{j=m+1}^{m+n}G_j(p_2, k_j^2) =0,
  \end{aligned}
\end{equation}
where we call $p_1$ and $p_2$ the values of $p$ associated respectively to $\vec k_1 $ and $\vec k_2$.

\vspace{0.2cm}

\noindent \textbf{Case 2}
\begin{equation}
  \begin{aligned}
&    \sum_{i=1}^{m} q_i(k_i^1,k_i^2) - \sum_{j=m+1}^{m+n}q_j(k_j^1,k_j^2) = \pm\sum_{i=1}^{m} \left(f_i(k_i^1) - f_i(k_i^2)\right) \pm \sum_{j=m+1}^{m+n} \left(f_j(k_j^1) - f_j(k_j^2)\right) \\
& =\pm2\sum_{i=1}^{m}G_i(k_i^1, p_1) \mp 2\sum_{i=1}^{m}G_i(k_i^2, p_2).
  \end{aligned}
\end{equation}
Assume, to fix the ideas, that $s_i>0$, and $s_j<0$. Then the expression above writes as
\begin{equation}
 2 F^*(\vec k^1) - 2F^*(\vec k^2)
\end{equation}
with the notation \eqref{defF}, and this expression is for sure non negative thanks to the monotonicity properties of the function $F^*$ established in Lemma~\ref{pexists-Part2}(ii).

\vspace{0.2cm}

\noindent \textbf{Case 3} In this case we define the vector $\vec w $, whose components satisfy $w_h = \min\{k_h^1,\,k_h^2\}$. It is important to notice that the vector $\vec w$ does not belong (in general) to $\mathcal{G}_{VV}$.
From the proof of Lemma~\ref{pexists-Part2} (see \eqref{eq:p(u)-monotonicity}) we deduce that $p_{\vec w}\leq \min\{p_1,p_2\}$ where we denoted $p_s:=p_{\vec k^s}$, $s=1,2$. We have, using the notation \eqref{phis} and the fact that $w_i=k_i^1$ for $i\in\{1,\ldots,m\}$ and $w_j=k_j^2$ for $j\in\{m+1,\ldots,m+n\}$,
\begin{equation*}
  \begin{aligned}
   \sum_{i=1}^{m} q_i(k_i^1,k_i^2) = & \sum_{i=1}^{m}s_i \left(G_i(k_i^1, p_1) - G_i(k_i^2, p_2)\right) \\
 =&\sum_{i=1}^{\alpha}s_i \left(G_i(k_i^1, p_1) - G_i(w_i, p_2)\right) + \sum_{i=\alpha+1}^{m}s_i \left(G_i(w_i, p_1) - G_i(k_i^2, p_2)\right)\\
\geq& \sum_{i=1}^{\alpha}s_i \left(G_i(k_i^1, p_1) - G_i(w_i, p_{\vec w})\right) + \sum_{i=\alpha+1}^{m}s_i \left(G_i(w_i, p_{\vec w}) - G_i(k_i^2, p_2)\right)\\
=& \sum_{i=1}^{m} \sign(k_i^1- w_i) \left(G_i(k_i^1, p_1) - G_i(w_i, p_{\vec w})\right) \\
&+\sum_{i=1}^{m} \sign(k_i^2- w_i) \left(G_i(k_i^2, p_2) - G_i(w_i, p_{\vec w})\right)\\
=& \Phi^{in}_{\vec k^1}(p_1) - \Phi^{in}_{\vec w}(p_{\vec w}) - \sum_{i=\alpha+1}^{m} \left(G_i(k_i^1, p_1) - G_i(k_i^2, p_{\vec w})\right)\\
& + \Phi^{in}_{\vec k^2}(p_2) - \Phi^{in}_{\vec w}(p_{\vec w}) - \sum_{i=1}^{\alpha} \left(G_i(k_i^2, p_2) - G_i(k_i^2, p_{\vec w})\right).
  \end{aligned}
\end{equation*}
 Analogous computations on the sum involving the outgoing roads give a similar result. Therefore we obtain
 \begin{equation*}
  \begin{aligned}
&\sum_{i=1}^{m} q_i(k_i^1,k_i^2) - \sum_{j=m+1}^{m+n} q_j(k_j^1,k_j^2)  \\
&\geq - \sum_{i=\alpha+1}^{m} \left(G_i(k_i^1, p_1) - G_i(k_i^i, p_w)\right) - \sum_{i=1}^{\alpha} \left(G_i(k_i^2, p_2) - G_i(k_i^2, p_w)\right)\\
&\quad+ \sum_{j=\beta+1}^{m+n} \left(G_j(k_j^1, p_1) - G_i(k_j^1, p_w)\right) + \sum_{j=m+1}^{\beta} \left(G_j(k_j^2, p_2) - G_j(k_j^2, p_w)\right).
  \end{aligned}
\end{equation*}
Each of the four summands in the right hand side is positive, therefore the desired inequality holds.
 \end{proof}

\subsection{Riemann problem at the junction}\label{ssec:Riemann} 
In this section we discuss the Riemann solver at the junction associated to the vanishing viscosity limit, i.e. to the Definition~\ref{defBLN} of admissible solution. A very general definition of Riemann solver at junctions is provided in \cite[Def.~2.3]{BressanCanicEtAl}. In the framework of the present paper, such definition specializes as follows
\begin{definition}\label{RS}
  The Riemann solver associated to the vanishing viscosity limit is a function
  \begin{equation}
    \mathcal{RS} : [0,R]^{m+n} \to[0,R]^{m+n}, \qquad \mathcal{RS}(\vec u_0) = \vec u,
  \end{equation}
with the following properties:
\begin{enumerate}
\item  There exists $p\in [0,R]$ such that
\begin{align}
  & f_i(u_i) = G_i(u^0_i, p)\quad\text{if } i\in \{1, \ldots, m\} ; \\
  & f_j(u_j) = G_j(p, u^0_j)\quad\text{if } j\in \{m+1, \ldots, m+n\},
\end{align}
where $G_i$ and $G_j$ are the Godunov fluxes associated to $f_i$ and $f_j$ respectively, and  $\sum_{i=1}^{m} f_i(u_i) = \sum_{j=m+1}^{m+n} f_j(u_j)$.
\item The consistency condition $\mathcal{RS}\left(\mathcal{RS}(\vec u_0)\right) = \mathcal{RS}(\vec u_0)$ holds for all $\vec u_0$ in $[0,R]^{m+n}$.
\end{enumerate}
\end{definition}
It is clear from the definition above that $\mathcal G_{VV}$ coincides with the set of \emph{equilibria}, see\cite[Def.\,2.5]{BressanCanicEtAl}, for the Riemann Solver obtained as vanishing viscosity limit. The next lemma claim that a Riemann solver in the sense of Definition~\ref{RS} exists.

\begin{lemma}\label{complete}
  For any given initial condition $\vec u_0$ in $[0,R]^{m+n}$ there exists a self-similar function $\vec \rho = (\rho_1, \ldots, \rho_{m+n})$ in $\Lp\infty \left(\eR_+\times\Gamma ; [0,R]^{m+n}\right)$  which is an admissible entropy solution of the Riemann problem at the junction in the sense of Definition~\ref{defBLN}. In particular, this means that the vector $\vec{\gamma\rho}$ of traces of this solution at the junction belongs to $\mathcal{G}_{VV}$.
\end{lemma}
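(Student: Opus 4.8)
The plan is to construct the solution road by road, using as common Dirichlet datum at the junction the value furnished by Lemma~\ref{pexists}. First I would apply Lemma~\ref{pexists}(i) to select $p := p_{\vec u_0}\in\mathcal P_{\vec u_0}$, so that
$\sum_{i=1}^m G_i(u^0_i,p)=\sum_{j=m+1}^{m+n} G_j(p,u^0_j)$.
This single value $p$ is meant to serve as the artificial boundary value $v_h\equiv p$ on every road, thereby enforcing \eqref{eq:vh-coincide} by construction.

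Next, on each road separately I would solve the \emph{boundary} Riemann problem: on $\Omega_i=\eR_-$ the IBVP with constant interior datum $u^0_i$ and Dirichlet value $p$ at $x=0$, and on $\Omega_j=\eR_+$ the IBVP with datum $u^0_j$ and Dirichlet value $p$. For constant data these admit a self-similar Kruzhkov entropy solution $\rho_h$, obtained as the restriction to $\Omega_h$ of the full-line Riemann solution with states $(u^0_i,p)$ (resp.\ $(p,u^0_j)$). The crucial structural fact — and the step I expect to require the most care — is the identification of the boundary trace. On the incoming side, the $x=0^-$ trace of the full-line Riemann solver realizes the Godunov value, $f_i(\gamma_i\rho_i)=G_i(u^0_i,p)$, and at the same time satisfies the Bardos--LeRoux--N\'ed\'elec condition recalled in \S~\ref{ssec:BLN}, namely $G_i(\gamma_i\rho_i,p)=f_i(\gamma_i\rho_i)$; symmetrically on the outgoing side one gets $f_j(\gamma_j\rho_j)=G_j(p,u^0_j)$ together with $G_j(p,\gamma_j\rho_j)=f_j(\gamma_j\rho_j)$. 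This is the classical compatibility of the Godunov flux with the BLN trace condition, and it is precisely where the explicit form \eqref{godunovflux} and the monotonicity of $G_h$ enter.

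Once the traces are identified, verifying Definition~\ref{defBLN} is immediate: each $\rho_h$ is a Kruzhkov solution in the interior of $\Omega_h$ and satisfies \eqref{eq:God-i}--\eqref{eq:God-j} with $v_h\equiv p$, so \eqref{eq:vh-coincide} holds, while the conservativity \eqref{eq:defBNL} follows by combining the trace identities with the choice of $p$:
\[
\sum_{i=1}^m G_i(\gamma_i\rho_i,p)=\sum_{i=1}^m f_i(\gamma_i\rho_i)=\sum_{i=1}^m G_i(u^0_i,p)=\sum_{j=m+1}^{m+n} G_j(p,u^0_j)=\sum_{j=m+1}^{m+n} f_j(\gamma_j\rho_j)=\sum_{j=m+1}^{m+n} G_j(p,\gamma_j\rho_j),
\]
the central equality being exactly the defining relation of $p\in\mathcal P_{\vec u_0}$. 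Finally, the same three facts — existence of a common $p$, the flux identities $G_i(\gamma_i\rho_i,p)=f_i(\gamma_i\rho_i)$ and $G_j(p,\gamma_j\rho_j)=f_j(\gamma_j\rho_j)$, and the balance of the two sums — are precisely the membership conditions \eqref{definitiongerm}, whence $\vec{\gamma\rho}\in\mathcal G_{VV}$. Self-similarity of $\vec\rho$ is inherited from the self-similarity of each road-wise Riemann solution, which completes the construction.
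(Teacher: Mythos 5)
Your proposal is correct and follows essentially the same route as the paper's own proof: select $p=p_{\vec u_0}$ via Lemma~\ref{pexists}, solve the constant-data IBVP \eqref{IBVProad} road by road with Dirichlet value $p$, identify the traces through the Dubois--LeFloch compatibility of the Godunov flux with the BLN condition (the paper's equalities \eqref{BCBLN-modified}), and read off membership in $\mathcal{G}_{VV}$ from \eqref{definitiongerm}. The only cosmetic difference is that the paper deduces self-similarity from uniqueness plus scaling invariance of the IBVP, while you inherit it from the full-line Riemann solution, and the paper spells out the wave-speed juxtaposition argument you invoke as classical.
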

\begin{proof}
  Given $\vec u_0$ in $[0,R]^{m+n}$ we apply Lemma~\ref{pexists} and find $p:=p_{\vec u_0}\in[0,R]$ such that $\sum_{i=1}^m G_i(u_i^0, p) = \sum_{j=m+1}^{m+n} G_j(p, u_j^0) $. Then we consider the $h$ initial boundary value problems with constant data
  \begin{equation}\label{IBVPcd}
    \begin{cases}
      \rho_{h,t} + f_h(\rho_h)_x =0, &\qquad \text{on }  ]0,T[\times\Omega_h,\\
\rho_h(t,0) = p, &\qquad \text{on }  ]0,T[,\\
\rho_h(0,x) = u^0_h,  &\qquad \text{on } \Omega_h.
    \end{cases}
  \end{equation}
Call $\rho_h$ the Kruzhkov entropy weak solution to \eqref{IBVPcd} and $\gamma_h \rho_h$ its (strong) trace at $\partial\Omega_h$, satisfying the boundary condition in the sense of Bardos-Le Roux-N\'ed\'elec.
Because the solution is unique and the problem is invariant under the scaling $(t,x)\mapsto (ct,cx)$ for all $c>0$, the solution is self-similar, i.e., each of the components $\rho_h$ depends only on the ratio $\frac xt$.
To conclude the proof, it is enough to observe that
\begin{equation}\label{BCBLN-modified}
  \begin{aligned}
    G_h(\gamma_h \rho_h, p)&= f_h(\gamma_h \rho_h) = G_h(u_h^0, p), \quad\text{if } h\leq m, \\
    G_h(p, \gamma_h \rho_h)&= f_h(\gamma_h \rho_h) = G_h(p, u_h^0), \quad\text{otherwise},
  \end{aligned}
\end{equation}
because in this case $\vec{\gamma\rho}$ fulfills the definition of $\mathcal{G}_{VV}$ with $p=p_{\vec u_0}$ and consequently, one sees that $\vec{\rho}$ is an admissible solution of the Riemann problem at the junction.
Equalities \eqref{BCBLN-modified} follow from the observations of \cite{DuboisLeFloch}. For the sake of completeness,
let's point out that, e.g., for all $i\in\{1,\ldots,m\}$,
\begin{equation}\label{BCBLN-states}
\gamma_i \rho_i =\left\{  \begin{aligned}
   & \text{argmin}_{[u^0_i,p]} f_i , \quad\text{if } u^0_i\leq p, \\
   & \text{argmax}_{[p,u^0_i]} f_i, \quad\text{if } u^0_i\geq p.
  \end{aligned}
  \right.
\end{equation}
 With this value $\gamma_i \rho_i$, the classical Riemann problem
with endstates $u^0_i$ and $\gamma_i \rho_i$ is solved only with the waves of negative speed while the
classical Riemann problem
with endstates  $\gamma_i \rho_i$ and $p$ is solved only with the waves of positive speed, so that the classical Riemann problem
with endstates $u^0_i$ and $p$ is solved by juxtaposition of the two. The definition of the Godunov flux in \S~\ref{ssec:Godunov} ensures that the values $f_i(\gamma_i \rho_i)$, $G_i(u^0_i,\gamma_i \rho_i)$, $G_i(\gamma_i \rho_i,p)$ and $G_i(u^0_i,p)$ coincide.
\end{proof}
\begin{remark}\label{rem:junctionGodunov}
Given $\vec u\in [0,R]^{m+n}$, let $p_{\vec u}$ be defined by \eqref{eq:pu}.
According to the above proof, the self-similar admissible solution $\vec{\rho}$ of the Riemann problem fulfills, with the notation \eqref{eq:junctionsolver-def},
\begin{align*}
f_i(\gamma_i\rho_i(t)) = G_i(u_i, p_{\vec u}) = G^*_i(\vec u) & \;\;\text{for all}\; i\in\{1, \ldots, m\},\\
f_j(\gamma_j\rho_j(t)) = G_j(p_{\vec u}, u_j) = G^*_j(\vec u) & \;\; \text{for all}\; j\in\{1+m, \ldots, m+n\}
\end{align*}
(recall that while $p_{\vec u}$ may not be unique, the above flux values are uniquely defined). We see that given $\vec u$,
 the collection of values $\vec G^*(\vec u)\in \eR^{m+n}$  defines, road per road, the Godunov flux at the junction associated with the Riemann solver at the junction described by Lemma~\ref{complete} (the flux is outgoing from $\Omega_i$ for $i\in\{1,\ldots,m\}$ and incoming into $\Omega_j$ for $j\in\{m+1,\ldots,m+n\}$).
 
 Lemma~\ref{pexists-Part2}(i) states that each flux $G^*_h$ is  one-sided Lipschitz with respect to $u_h$ and monotone with respect to $u_{h'}$ for all $h'\neq h$.
 Observe that the last equality in \eqref{defF} expresses the conservation property at junction.
 These properties will permit us to formulate a monotone, in the sense of \cite{CrandallMajda}, conservative finite volume scheme for approximation of admissible solutions of \eqref{eq:basic}.
\end{remark}

\begin{lemma}\label{maximal}
  Let $\vec u = (u_1, \ldots, u_{m+n})$ in $[0,R]^{m+n}$ be such that  the following family of inequalities holds
  \begin{equation}\label{eq:dual-def}
   \forall \vec k = (k_1, \ldots, k_{m+n})\in \mathcal{G}_{VV}\colon\;\;\; \Delta(\vec u,\vec k)=\sum_{i=1}^{m} q_i(u_i,k_i) - \!\sum_{j=m+1}^{m+n}q_j(u_j,k_j)\geq 0.
  \end{equation}
Then $\vec u$ is in $\mathcal{G}_{VV}$.
Moreover, being understood that the fluxes $f_h$, $h\in\{1,\ldots,m+n\}$, fulfill the condition \textbf{(F)}, the conclusion ``$\vec u\in \mathcal G_{VV}$''  still holds if in \eqref{eq:dual-def}, the condition ``$\vec k\in \mathcal G_{VV}$'' is replaced by the condition
``$\vec k\in \mathcal G_{VV}^o$'', where $\mathcal G_{VV}^o$ is the subset of $\mathcal G_{VV}$ described by
\begin{equation}\label{eq:GVV-o}
   \mathcal{G}_{VV}^o =
    \left\{\begin{aligned}
    \vec k = &(k_1,\ldots,k_{m+n}) \in   \mathcal{G}_{VV} : \: \exists p\in[0,R] \:\text{such that} \\
    & \sum_{i=1}^{m} G_i(k_i, p) = \sum_{j=m+1}^{m+n}G_j(p, k_j), \\
    & \forall s\in I[k_i,p]\setminus\{k_i\} \quad (p-k_i)(f_i(s)-f_i(k_i))> 0, \:\text{for } i= 1,\ldots, m, \\
    &\forall s\in I[p,k_j]\setminus\{k_j\} \quad (k_j-p)(f_j(s)-f_j(k_j))> 0, \:\text{for } j= m+1,\ldots, m+n
    \end{aligned}
    \right\}.
  \end{equation}
\end{lemma}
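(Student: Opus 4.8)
The plan is to reduce the statement to the Oleinik-type characterization of Lemma~\ref{Oleinik} by testing the hypothesis \eqref{eq:dual-def} against a single, carefully chosen element of $\mathcal G_{VV}$: the ``projection'' of $\vec u$ onto the germ furnished by the Riemann solver. First I would fix $p_{\vec u}\in\mathcal P_{\vec u}$ as in Lemma~\ref{pexists} and let $\vec k^*=(k_1^*,\ldots,k_{m+n}^*)$ be the trace vector $\vec{\gamma\rho}$ of the self-similar solution built in Lemma~\ref{complete} from the datum $\vec u$; by that lemma $\vec k^*\in\mathcal G_{VV}$, it is associated with the \emph{same} value $p_{\vec u}$, and $f_h(k_h^*)=G^*_h(\vec u)$ for all $h$ (Remark~\ref{rem:junctionGodunov}). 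The decisive point is that for this particular $\vec k^*$ the explicit trace formula \eqref{BCBLN-states} pins the sign of each Kruzhkov term: for an incoming road, $k_i^*$ is the minimizer of $f_i$ on $[u_i,p_{\vec u}]$ when $u_i\le p_{\vec u}$ and the maximizer on $[p_{\vec u},u_i]$ when $u_i\ge p_{\vec u}$, so $\sign(u_i-k_i^*)$ and $f_i(u_i)-f_i(k_i^*)$ always carry opposite signs and hence $q_i(u_i,k_i^*)\le 0$; the mirror computation for an outgoing road gives $q_j(u_j,k_j^*)\ge 0$. Summing, $\Delta(\vec u,\vec k^*)=\sum_i q_i(u_i,k_i^*)-\sum_j q_j(u_j,k_j^*)\le 0$.

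Combining this with the hypothesis $\Delta(\vec u,\vec k^*)\ge 0$ (legitimate since $\vec k^*\in\mathcal G_{VV}$) yields $\Delta(\vec u,\vec k^*)=0$; as each incoming summand is $\le 0$ and each outgoing summand is $\ge 0$, every term vanishes separately. I would then unpack $q_h(u_h,k_h^*)=0$ road by road: e.g.\ for $u_i\le p_{\vec u}$ it forces $f_i(u_i)=\min_{[u_i,p_{\vec u}]}f_i$ (either because $u_i=k_i^*$, or because $f_i(u_i)=f_i(k_i^*)$ already realizes that minimum), which is exactly the inequality $(p_{\vec u}-u_i)(f_i(s)-f_i(u_i))\ge 0$ for all $s\in I[u_i,p_{\vec u}]$; the case $u_i\ge p_{\vec u}$ and the outgoing roads are analogous. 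Since the conservation equality in \eqref{eq:pu} holds by the very choice of $p_{\vec u}$, the vector $\vec u$ meets all requirements in \eqref{eq:Oleinik-like} with $p=p_{\vec u}$, and Lemma~\ref{Oleinik} gives $\vec u\in\mathcal G_{VV}$. This settles the first assertion.

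For the strengthening to $\mathcal G_{VV}^o$, the plan is to recover the full hypothesis (testing against all of $\mathcal G_{VV}$) from the restricted one by density and continuity, and then apply the first part. The map $\vec k\mapsto\Delta(\vec u,\vec k)$ is continuous on $[0,R]^{m+n}$, since each $q_h(u_h,\cdot)$ is continuous (the jump of $\sign$ is killed by the factor $f_h(u_h)-f_h(\cdot)$, which vanishes on the diagonal). Hence it suffices to show that $\mathcal G_{VV}^o$ is dense in $\mathcal G_{VV}$: then $\Delta(\vec u,\cdot)\ge 0$ on $\mathcal G_{VV}^o$ passes to the limit to give $\Delta(\vec u,\cdot)\ge 0$ on all of $\mathcal G_{VV}$, and the first part applies. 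Using condition \textbf{(F)} (which renders each $f_h$ strictly monotone on $[0,\bar\rho_h]$ and on $[\bar\rho_h,R]$), a direct inspection of \eqref{eq:GVV-o} shows that for a fixed admissible $p$ the components failing the \emph{strict} inequalities are isolated: an incoming $k_i$ is critical exactly when $p>\bar\rho_i$ and $k_i=\hat p_i$, the increasing-branch point with $f_i(\hat p_i)=f_i(p)$ (its flux being the ``$p$-capacity'' $f_i(p)$), and symmetrically for outgoing roots on the decreasing branch. Given $\vec k\in\mathcal G_{VV}$ I would nudge each critical component by $\varepsilon>0$ along its branch toward strictly smaller flux, thereby entering $\mathcal G_{VV}^o$, and restore the single identity $\sum_i G_i(k_i,p)=\sum_j G_j(p,k_j)$, the exact amounts being fixed by the intermediate value theorem; letting $\varepsilon\to0$ yields the approximating sequence.

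I expect this density step to be the main obstacle. The perturbation must make \emph{every} critical component strict while simultaneously preserving the global conservation identity, and the bookkeeping forces a case distinction according to which roads are critical: a critical incoming nudge can be balanced against a critical outgoing one, but when only one type is present the slack must be absorbed by a non-critical road whose flux depends strictly monotonically on its state, and in the degenerate configurations where no such ``slack'' road exists at fixed $p$ one is forced into a joint perturbation of $p$ together with the states (tracking each $k_h$ so as to keep it admissible and, through the strict monotonicity of \textbf{(F)} and the intermediate value theorem, to restore the balance). By contrast, the sign computation and the passage from $\Delta(\vec u,\vec k^*)=0$ to the Oleinik inequalities in the first part are routine once the projection $\vec k^*$ and formula \eqref{BCBLN-states} are available.
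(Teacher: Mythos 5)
Your proof of the first assertion is correct and is essentially the paper's own argument: you test \eqref{eq:dual-def} against the trace vector $\vec k^*$ of the Riemann solution at the junction with datum $\vec u$ (Lemma~\ref{complete}), use \eqref{BCBLN-states} to fix the sign of each term ($q_i(u_i,k_i^*)\le 0$ on incoming, $q_j(u_j,k_j^*)\ge 0$ on outgoing roads), deduce that all terms vanish, and read off germ membership with $p=p_{\vec u}$; whether one then invokes \eqref{definitiongerm} directly (as the paper does) or the Oleinik characterization of Lemma~\ref{Oleinik} (as you do) is immaterial.

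The second assertion is where your proposal has a genuine gap. You reduce it to two ingredients: continuity of $\vec k\mapsto \Delta(\vec u,\vec k)$ (fine) and density of $\mathcal G_{VV}^o$ in $\mathcal G_{VV}$ --- and the density is precisely what you never establish. Your sketch (``nudge the critical components toward smaller flux, rebalance by the intermediate value theorem'') explicitly leaves unresolved the configurations in which the slack cannot be absorbed at fixed $p$, and that case analysis is not a routine detail. The direction of the perturbation is already delicate: for $m=n=1$, if $k_1$ lies on the increasing branch of $f_1$, $k_2$ on the decreasing branch of $f_2$, $f_1(k_1)=f_2(k_2)$, and the two conjugate states coincide (this common value being the unique $p$ admissible in \eqref{definitiongerm}), then perturbing toward \emph{larger} flux leaves no value of $p$ satisfying the constraints of \eqref{definitiongerm} at all, so the perturbed vector exits $\mathcal G_{VV}$; in configurations with roads pinned to $p$ (incoming states on the decreasing branch, outgoing states on the increasing branch force $p$ to equal them), decreasing the critical fluxes breaks conservation unless $p$ itself is moved, and one must check road by road, branch by branch (including the boundary cases $f_h(k_h)\in\{0,\max f_h\}$), that this joint move restores balance while keeping every Oleinik inequality strict. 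None of this is carried out, and it is the heart of the matter. The paper needs no density argument: assuming for definiteness that only the event $f_1(k_1)=f_1(p)$, $k_1<p$ occurs, it replaces $k_1$ by $p$ itself, notes that $\vec k'=(p,k_2,\ldots,k_{m+n})$ lies in $\mathcal G_{VV}^o$ (same $p$, same flux values, and $I[p,p]\setminus\{p\}=\emptyset$), and uses the pointwise comparison $q_1(u_1,k_1)\ge q_1(u_1,p)$ for every $u_1$ (valid because $f_1\ge f_1(k_1)=f_1(p)$ on $[k_1,p]$) to get $\Delta(\vec u,\vec k)\ge\Delta(\vec u,\vec k')\ge 0$, thereby reducing the weakened hypothesis to the full one. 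Note that the substitution $k_1\mapsto p$ is not a small perturbation: it is a monotonicity (comparison) argument, not a topological one, and that is exactly what lets the paper bypass the bookkeeping that blocks your route. To repair your proof you must either prove the density lemma in full (the branch-by-branch analysis does appear feasible, but it is a real proof with several cases, not a remark), or observe that your first part only ever tests against the single element $\vec k^*$, so it suffices to dominate $\Delta(\vec u,\vec k^*)$ from below by $\Delta(\vec u,\cdot)$ evaluated at a point of $\mathcal G_{VV}^o$ --- which is exactly the paper's substitution trick.
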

\begin{proof}
   First, let us prove the result under the assumption \eqref{eq:dual-def}. Take $\vec u = (u_1, \ldots, u_{m+n})$ as initial condition for a Riemann problem at the junction. Then consider the associated solution  $\vec v = (v_1, \ldots, v_{m+n})$   and the traces $\vec{\gamma v}$, as in the proof on Lemma~\ref{complete}. We know that $\vec{\gamma v}$ is in $\mathcal{G}_{VV}$, and therefore by the assumption \eqref{eq:dual-def},
\begin{equation}
    \sum_{i=1}^{m} q_i(u_i,\gamma_iv_i) - \sum_{j=m+1}^{m+n}q_j(u_j,\gamma_jv_j)\geq 0.
  \end{equation}
By construction we have (see the proof of Lemma~\ref{complete}) $f_h(\gamma_hv_h) = G_h(u_h, p)=G_h(\gamma_hv_h, p)$ if $h\leq m$, $f_h(\gamma_hv_h) = G_h(p, u_h)=G_h(p, \gamma_hv_h)$ if $h> m$. Moreover, by maximum principle, $\gamma_hv_h$ in between $u_h$ and $p$. This means that
\begin{equation}
  f_i(u_i) - f_i(\gamma_iv_i) =f_i(u_i) - G_i(u_i, p) =
  \begin{cases}
    f_i(u_i) -\min_{s\in[u_i, p]} f_i(s)\geq 0 &\quad \text{if } u_i\leq p, \\
    f_i(u_i) -\max_{s\in[p, u_i]} f_i(s)\leq 0 &\quad \text{if } u_i\geq p,
  \end{cases}
\end{equation}
then $f_i(u_i) - f_i(\gamma_iv_i)$ is non negative when $u_i\leq \gamma_iv_i$ and non positive when $u_i\geq \gamma_iv_i$. This means that
\begin{equation}
    \sum_{i=1}^{m} q_i(u_i,\gamma_iv_i) = - \sum_{i=1}^{m}\vert f_i(u_i) - f_i(\gamma_iv_i)\vert.
  \end{equation}
In the same way we show that
\begin{equation}
  \sum_{j=m+1}^{m+n}q_j(u_j,\gamma_jv_j) = \sum_{j=m+1}^{m+n} \vert f_j(u_j)-f_j(\gamma_jv_j)\vert.
\end{equation}
The sum of non positive terms can be non negative only if all the terms vanish. Therefore for any $h\in\{1,\ldots,m+n\}$ we have  $f_h(u_h) = f_h(\gamma_hv_h) =G_h(u_h, p)$.
In view of \eqref{definitiongerm}, this shows that $\vec v$ belongs to $\mathcal{G}_{VV}$.

Now, let us prove the last claim of the lemma. It is easily seen from the comparison of \eqref{eq:Oleinik-like} and \eqref{eq:GVV-o} that due to assumption \textbf{(F)} on the shape of the fluxes, the difference between the subsets $\mathcal G_{VV}$ and $\mathcal G_{VV}^o$ of $[0,R]^{m+n}$ consists in $m+n$-uplets $\vec k$ for which at least one of the following $(m+n)$ events occurs:
\begin{itemize}
  \item[$(A_i)$] $f_i(k_i)=f_i(p)$ and $k_i< p$;
  \item[$(B_j)$] $f_j(k_j)=f_j(p)$ and $k_j> p$.
\end{itemize}
Indeed, if, for instance, there holds
\begin{equation}\label{eq:A1-case}
\forall s\in I[k_1,p]\colon \; (p-u_1)(f_1(s)-f_1(k_1))\geq 0 \;\;\text{and}\;\; \exists s_0\in I[k_1,p]\setminus\{k_1\}\;\,\text{s.t.}\;\, f_1(s_0)=f_1(k_1),
\end{equation}
then the shape assumption $\textbf{(F)}$ tells us that $s_0=p$ and, moreover, $k_1<p$.

For the sake of being definite, assume that among $(A_i)_{i\in\{1,\ldots,m\}}$,$(B_j)_{j\in\{m+1,\ldots,m+n\}}$ the only event that occurs is $(A_1)$, namely, $f_1(k_1)=f_1(p)$, $k_1< p$ but neither $(A_i)$, $i\in\{2,\ldots,m\}$ nor $(B_j)$, $j\in\{m+1,\ldots,m+n\}$ occur.

Observe that in this case, the $(m+n)$-uplet ${\vec k}':=(p,k_2,\ldots,k_{m+n})$ belongs to $\mathcal G_{VV}^o$. Indeed, it belongs to $\mathcal G_{VV}$ since it corresponds to the same value $p$, while the event \eqref{eq:A1-case} does not occur any more since for $k_1'=p$, $I[k_1',p]\setminus\{k_1'\}$ is empty.
Moreover, whatever be $u_1\in [0,R]$, we have
$$q_1(u_1,k_1)=\sign(u_1- k_1)(f_1(u_1)-f_1(k_1)) \geq \sign(u_1- p)(f_1(u_1)-f_1( p)) =q_1(u_1, p).$$
 Consequently,
$\Delta(\vec u,\vec k) \geq
\Delta(\vec u,\vec k')\geq 0,
$
where we have used the assumption of the last claim of the lemma and the fact that ${\vec k}'\in\mathcal G_{VV}^o$.

The general case is fully analogous,
so that we find $\Delta(\vec u,\vec k)\geq 0$ not only for $\vec k\in \mathcal G_{VV}^o$, but for all $\vec k\in \mathcal G_{VV}$.
 We are reduced to the first claim of the lemma. This ends the proof.
\end{proof}

\subsection{Reformulation of admissibility in terms of traces at the junction}
Now, we are ready to give an alternative formulation of admissibility for \eqref{eq:basic}.
To this end, recall (see Section~\ref{sec:Prelim}) that local Kruzhkov entropy solutions admit boundary traces in the strong $\Lp1$ sense. Therefore the following definition makes sense.
\begin{definition}\label{Gentropysolution}
Given $m+n$ fluxes $f_h$ satisfying \textbf{(F)} and an initial condition $\vec u_0$ in $[0,R]^{m+n}$, we call $\vec \rho = (\rho_1, \ldots, \rho_{m+n})$ in $\Lp\infty \left(\eR_+\times \Gamma ; [0,R]^{m+n}\right)$ a $\mathcal{G}_{VV}$-entropy solution of \eqref{eq:basic} associated with $\vec u_0$ if
\begin{itemize}
\item the first item of Definition~\ref{solutionjunction} holds, i.e., the Kruzhkov entropy inequalities \eqref{entropycondition} hold for any $h \in\{1,\ldots, m+n\}$;
\item for a.e. $t$  in $\eR_+$,the vector $\vec{\gamma \rho }(t):=(\gamma_1 \rho_1(t),\ldots,\gamma_{m+n}\rho_{m+n}(t))$ of traces at the junction belongs to $\mathcal{G}_{VV}$.
\end{itemize}
\end{definition}
This formulation will lead to the uniqueness proof.
Before turning to the proof of equivalence of Definitions~\ref{defBLN} and~\ref{Gentropysolution}, we propose another reformulation, which will be useful for proving existence of solutions.

\subsection{Adapted entropies and another reformulation of admissibility}
Recall that, except very special cases like $n=m$ with $f_h = f$ for all $h\in\{1,\ldots, 2m\}$,
we cannot expect that constants (seen as $\vec k=(k,\ldots,k)$, $k\in [0,R]$) be solutions of \eqref{eq:basic}. Moreover, the above analysis provides us with
a wide set of stationary, constant per road solutions associated to $\vec k\in \mathcal G_{VV}$. Therefore, following \cite{BaitiJenssen,AudussePerthame05}
it is natural to express global (including junction) admissibility in terms of \emph{adapted} entropy inequalities, where the constants of the Kruzhkov-like formulation proposed in \cite{coclitegaravello} are replaced by the stationary solutions associated with states in $\mathcal G_{VV}$. This leads to the following definition.
\begin{definition}
\label{adaptedentropysolution}
Given $m+n$ fluxes $f_h$ satisfying \textbf{(F)} and an initial condition $\vec u_0$ in $[0,R]^{m+n}$, we call $\vec \rho = (\rho_1, \ldots, \rho_{m+n})$ in $\Lp\infty \left(\eR_+\times \Gamma ; [0,R]^{m+n}\right)$ an adapted entropy solution of \eqref{eq:basic} associated with $\vec u_0$ if
\begin{itemize}
\item The first item of Definitions~\ref{solutionjunction},~\ref{Gentropysolution} holds, i.e., the Kruzhkov entropy inequalities \eqref{entropycondition} hold for any $h \in\{1,\ldots, m+n\}$.
\item For any $\vec k \in \mathcal{G}_{VV}$ (which should be seen as a road-wise constant solution to the Riemann problem at the junction), $\vec\rho$ satisfies the \emph{adapted entropy inequality} on the network,
     namely,
 for any non negative test function $\xi\in\mathcal{D}(]0,+\infty[\times\eR)$
 \begin{equation}
 \label{eq:adp}
     \sum_{h=1}^{m+n}\left(\int_{\eR_+}\int_{\Omega_h} \left\{|\rho_h - k_h|\xi_{t} +q_h(\rho_h ,k_h)\xi_{x}\right\}\,dx\,dt \right)
     \geq 0.
 \end{equation}
\end{itemize}
\end{definition}

\begin{remark}
  We can consider as a first example the case in which $m=n=1$  and $f_1 = f_2 = f$. This example is not totally trivial, as the family of adapted entropies we consider is \emph{a priori} larger than the family of standard Kruzhkov entropies, because the latter correspond to the case $\vec k = (k,k)$ and $\mathcal{G}_{VV}$ contains, in addition, all vectors $\vec k = (k_1, k_2)$ such that $k_1$ and $k_2$ are respectively the left and the right state of a stationary Kruzhkov admissible jump.  Analogously, in the case $m=n$ and $f_h = f$ for all $h\in\{1,\ldots, 2m\}$ our definition seems more restrictive than the one given in the paper \cite{coclitegaravello}. In fact, the two approaches give the same result, for reasons similar to those that permit to reduce
  the second claim of Lemma~\ref{maximal} to its first claim.
\end{remark}

\begin{remark}
  In the references devoted to the theory of conservation laws with discontinuous flux (case $n=m=1$), see in particular \cite{germes, AC_transmission}, the adapted entropy inequality is sometimes written in a differential form, which is equivalent to the integral form \eqref{eq:adp} for junctions with one incoming and one outgoing road.
   The integral form \eqref{eq:adp} is the appropriate expression of adapted entropy inequalities in the case of general junctions.
\end{remark}

\subsection{Equivalence of the three formulations of admissibility}
As we already announced, the three definitions of solution admissibility actually describe one and the same notion.
\begin{theorem}\label{thm:TFAE}
  Definitions \ref{defBLN}, \ref{Gentropysolution} and \ref{adaptedentropysolution} are equivalent.
  Moreover, in Definition~\ref{adaptedentropysolution}, the set of adapted entropies can be restricted
  to $\vec k\in \mathcal G_{VV}^o$, without changing the resulting notion of solution.
\end{theorem}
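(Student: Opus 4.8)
The plan is to prove the three equivalences through a short cycle, handling first the two ``trace-based'' formulations (Definitions~\ref{defBLN} and~\ref{Gentropysolution}), which differ only superficially, and then linking them to the adapted-entropy formulation (Definition~\ref{adaptedentropysolution}) by an integration-by-parts identity at the junction. Since all three definitions include the same per-road Kruzhkov inequalities \eqref{entropycondition}, throughout I only need to reconcile the respective junction conditions.

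\emph{Equivalence of Definitions~\ref{defBLN} and~\ref{Gentropysolution}.} By the very definition \eqref{definitiongerm} of $\mathcal G_{VV}$, the requirement ``$\vec{\gamma\rho}(t)\in\mathcal G_{VV}$ for a.e.\ $t$'' is, pointwise in $t$, exactly the existence of a value $p(t)\in[0,R]$ for which the Bardos--LeRoux--N\'ed\'elec relations \eqref{eq:God-i}--\eqref{eq:God-j} and the conservativity \eqref{eq:defBNL} hold. The only genuine point is to upgrade this pointwise existence into a single measurable selection $p\in\Lp\infty(\eR_+;[0,R])$, as demanded by Definition~\ref{defBLN}. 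I would set $p(t):=\min\mathcal P_{\vec{\gamma\rho}(t)}$, which is well defined by Lemma~\ref{pexists}(i); since $(\vec u,p)\mapsto\Phi^{in}_{\vec u}(p)-\Phi^{out}_{\vec u}(p)$ is jointly continuous and non-increasing in $p$ (so that $\vec u\mapsto p_{\vec u}$ is monotone in each component by \eqref{eq:p(u)-monotonicity}), this selection is measurable, and composing with the measurable map $t\mapsto\vec{\gamma\rho}(t)$ yields the required $p$. The reverse implication is immediate.

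\emph{From Definition~\ref{Gentropysolution} to Definition~\ref{adaptedentropysolution}.} Fix $\vec k\in\mathcal G_{VV}$ and a nonnegative $\xi\in\mathcal D(]0,\infty[\times\eR)$, not necessarily vanishing at $x=0$. The first step is to apply \eqref{entropycondition} with test functions $\xi\chi_\delta$ cut off near the junction and to pass to the limit using the strong $\Lp1$ traces \eqref{traces}; this produces, road by road, the one-sided bounds
\[
\int_{\eR_+}\int_{\Omega_i}\bigl(|\rho_i-k_i|\xi_t+q_i(\rho_i,k_i)\xi_x\bigr)\,dx\,dt\geq\int_{\eR_+}q_i(\gamma_i\rho_i,k_i)\,\xi(t,0)\,dt
\]
for incoming roads, and the analogous inequality with the opposite boundary sign for outgoing roads. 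Summing over all roads gives the master inequality
\[
\sum_{h=1}^{m+n}\int_{\eR_+}\int_{\Omega_h}\bigl(|\rho_h-k_h|\xi_t+q_h(\rho_h,k_h)\xi_x\bigr)\,dx\,dt\geq\int_{\eR_+}\Delta(\vec{\gamma\rho}(t),\vec k)\,\xi(t,0)\,dt,
\]
which holds for any solution in the sense of either definition, as it uses only the interior inequalities and the traces. Because $\vec{\gamma\rho}(t)$ and $\vec k$ both lie in $\mathcal G_{VV}$, Lemma~\ref{lem:L1D} yields $\Delta(\vec{\gamma\rho}(t),\vec k)\geq0$; since $\xi(t,0)\geq0$ the right-hand side is nonnegative, and this is precisely \eqref{eq:adp}.

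\emph{From Definition~\ref{adaptedentropysolution} back to Definition~\ref{Gentropysolution}, and the restriction to $\mathcal G_{VV}^o$.} Here I would run the same identity backwards. Choosing $\xi(t,x)=\phi(t)\zeta_\eps(x)$ with $\phi\geq0$ and $\zeta_\eps$ a cutoff equal to $1$ near $x=0$ and supported in $[-2\eps,2\eps]$, and letting $\eps\to0$ in \eqref{eq:adp}, the $\xi_t$-terms vanish (bounded integrands on a shrinking support) while the $\xi_x=\phi\zeta_\eps'$-terms concentrate at the junction and converge, via \eqref{traces}, to the trace fluxes, leaving $\int_{\eR_+}\Delta(\vec{\gamma\rho}(t),\vec k)\,\phi(t)\,dt\geq0$ for every admissible $\phi$, hence $\Delta(\vec{\gamma\rho}(t),\vec k)\geq0$ for a.e.\ $t$ and each fixed $\vec k$. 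To pass from ``for each $\vec k$, for a.e.\ $t$'' to ``for a.e.\ $t$, for all $\vec k$'', I would use the continuity of $\vec k\mapsto\Delta(\vec u,\vec k)$ and the separability of the closed set $\mathcal G_{VV}$, testing only against a countable dense family. Fixing such a good $t$, the hypothesis \eqref{eq:dual-def} of the maximality Lemma~\ref{maximal} holds for $\vec u=\vec{\gamma\rho}(t)$, whence $\vec{\gamma\rho}(t)\in\mathcal G_{VV}$, the missing junction condition. For the final ``Moreover'' claim, one inclusion is trivial since $\mathcal G_{VV}^o\subset\mathcal G_{VV}$; for the other, the same concentration argument shows that testing \eqref{eq:adp} only against $\vec k\in\mathcal G_{VV}^o$ already forces $\Delta(\vec{\gamma\rho}(t),\vec k)\geq0$ for all such $\vec k$, and the \emph{second} claim of Lemma~\ref{maximal} was designed precisely to recover $\vec{\gamma\rho}(t)\in\mathcal G_{VV}$ from this weaker family, after which the forward implication restores \eqref{eq:adp} for all of $\mathcal G_{VV}$. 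I expect the genuine obstacle to be the rigorous control of the two limit passages resting on the strong trace property \eqref{traces}, in particular treating the $\xi_x$-terms as measures concentrating on $\{x=0\}$, together with the measure-theoretic ``a.e.\ $t$, for all $\vec k$'' reduction; the algebraic heart of the statement is already contained in Lemmas~\ref{lem:L1D} and~\ref{maximal}.
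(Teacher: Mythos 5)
Your proposal is correct and follows essentially the same route as the paper's proof: the same cutoff/concentration technique at the junction, Lemma~\ref{lem:L1D} to pass from the trace condition to the adapted inequalities \eqref{eq:adp}, Lemma~\ref{maximal} (including its second claim for the $\mathcal G_{VV}^o$ restriction) for the converse, and the measurable selection $p(t)=\min\mathcal P_{\vec{\gamma\rho}(t)}$ to recover Definition~\ref{defBLN}. If anything, your treatment is slightly more careful than the paper's on the measure-theoretic point of upgrading ``for each $\vec k$, for a.e.\ $t$'' to ``for a.e.\ $t$, for all $\vec k$'' via continuity of $\vec k\mapsto\Delta(\vec u,\vec k)$ and a countable dense family, a step the paper passes over silently.
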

\begin{proof}
We only need to establish equivalence between
the second items of the three definitions for functions $\vec \rho$ satisfying the Kruzhkov inequalities \eqref{entropycondition}
(which is a common condition for all of them).

\noindent$\bullet$ We prove that Definition \ref{adaptedentropysolution}
(even in the weaker version, where the choice of $\vec k$
for adapted entropy inequalities is restricted to $\mathcal G_{VV}^o$) 
implies Definition \ref{Gentropysolution}:

Let $\vec{\rho}$ be a solution in the sense of Definition \ref{adaptedentropysolution}.
We consider a non negative test function $\xi\in\mathcal{D}(\eR_+)$  and, for every $\alpha>0$,
$\chi_\alpha\in \Cc\infty(\R)$, such that
\begin{equation}\label{eq:chialpha}
0\le\chi_\alpha\le 1,\qquad |\chi_\alpha'|\le \frac{1}{\alpha},\qquad \chi_\alpha(x)=\begin{cases}
1,&\text{if $|x|\le\alpha$},\\ 0,&\text{if $|x|\ge 2\alpha$}.
\end{cases}
\end{equation}
With the test function $\xi\chi_\alpha$, inequality \eqref{eq:adp} becomes
 \begin{equation}
   \begin{aligned}
&     \sum_{h=1}^{m+n} \int_{\eR_+}\int_{\Omega_h} \left\{|\rho_h - k_h|\xi'(t)\chi_\alpha(x) +q_h(\rho_h ,k_h)\xi(t)\chi_\alpha'(x)\right\}\,dx\,dt  \\
&+\sum_{h=1}^{m+n} \int_{\Omega_h} |u^0_h(x) - k_h|\xi(0)\chi_\alpha(x)\,dx  \geq 0.
   \end{aligned}
 \end{equation}
 As $\alpha\to0$ we get
\begin{equation}
 - \int_{\eR_+} \left(\sum_{i=1}^{m} q_i(\gamma^i\rho_i ,k_i)-\sum_{j=m+1}^{m+n}q_j(\gamma^j\rho_j ,k_j)\right)\xi(t)\,dt\ge0.
 \end{equation}
Localization then yields, for a.e. $t\in\eR_+$, $\Delta(\vec{\gamma \rho}(t),\vec k)\geq 0$, in the notation of \eqref{eq:dual-def}. By Lemma~\ref{maximal},
\begin{equation}
\vec{\gamma \rho}(t)=(\gamma_1\rho_1(t),\ldots,\gamma_{m+n}\rho_{m+n}(t))\in\mathcal{G}_{VV},\qquad \text{a.e. $t\ge0$},
\end{equation}
and then $\vec{\rho}$ is a solution in the sense of Definition \ref{Gentropysolution}.

Moreover, if we require that adapted entropy inequalities of Definition~\ref{adaptedentropysolution} hold only with $\vec k\in \mathcal G_{VV}^o$,
we still get the same conclusion due to the last claim of Lemma~\ref{maximal}. This point justifies the last claim of the theorem.

\noindent$\bullet$ We prove that Definition \ref{Gentropysolution} 
implies Definition \ref{adaptedentropysolution}:

Let $\vec{\rho}$ be a solution in the sense of Definition \ref{Gentropysolution}. For every non negative test function $\xi\in\mathcal{D}(]0,+\infty[\times\eR)$, we get
\begin{equation}
 \begin{aligned}
&     \sum_{h=1}^{m+n} \int_{\eR_+}\int_{\Omega_h} \left\{|\rho_h - k_h|\xi_{t} +\left(q_h(\rho_h ,k_h)\right)\xi_{x}\right\}\,dx\,dt \\
&     -\int_{\eR_+} \biggr( \sum_{i=1}^{m}q_i(\gamma_i\rho_i (t),k_i) -\sum_{j=1}^{m+n} q_j(\gamma_j\rho_j (t),k_j) \biggr)\xi(t,0)\,dt
 \geq 0.
   \end{aligned}
 \end{equation}
Since
\begin{equation}
\vec{\gamma \rho}(t)=(\gamma_1\rho_1(t),\ldots,\gamma_{m+n}\rho_{m+n}(t))\in\mathcal{G}_{VV},\qquad \text{a.e. $t\ge0$},
\end{equation}
we have
\begin{equation}
\sum_{i=1}^{m}q_i(\gamma_i\rho_i (t),k_i) \ge\sum_{j=1}^{m+n} q_j(\gamma_j\rho_j (t),k_j) ,\qquad \text{a.e. $t\ge0$}.
\end{equation}
Therefore, $\vec{\rho}$ is a solution in the sense of Definition \ref{adaptedentropysolution}.

\noindent$\bullet$ We prove that Definition \ref{Gentropysolution}
implies Definition \ref{defBLN}:

Let $\vec{\rho}$ be a solution in the sense of Definition \ref{Gentropysolution}.
Since
\begin{equation}
\vec{\gamma \rho}(t)=(\gamma_1\rho_1(t),\ldots,\gamma_{m+n}\rho_{m+n}(t))\in\mathcal{G}_{VV},\qquad \text{a.e. $t\ge0$},
\end{equation}
there exists $p\in \Lp\infty(\R_+,[0,R])$ such that \eqref{eq:defBNL} holds for a.e. $t\ge0$. Let us point out that $p$ is indeed measurable. This results, first, from the measurability of the trace vector $\vec{\gamma \rho}:\eR_+\to [0,R]^{m+n}$; and second, given $\vec{\gamma \rho}(t)$, from the definition of $p(t)$ by equation \eqref{eq:defBNL} for which we can systematically take the smallest solution (cf. the proof of Lemma~\ref{pexists-Part2}).
Then $\vec{\rho}$ is a solution in the sense of Definition \ref{defBLN}.

\noindent$\bullet$ Finally, Definition \ref{defBLN}
implies Definition \ref{Gentropysolution}.

This follows from the definition of $\mathcal{G}_{VV}$ in \eqref{definitiongerm}, in view of \eqref{eq:God-i},\eqref{eq:God-j}.
\end{proof}


\section{Well-posedness of \eqref{eq:basic} in the frame of admissible solutions}~\label{sec:well-posed}
The goal of this section is to prove the following result.
\begin{theorem}\label{th:well-posed}
For any given initial condition $\vec u_0 = (u^0_1, \ldots, u^0_{m+n})$ in $\Lp\infty(\Gamma; \eR^{m+n})$ the problem \eqref{eq:basic} admits one and only one solution  $\rho$ in  $\Lp\infty(\eR_+\times\Gamma ; [0,R]^{m+n})$ in the sense of the equivalent Definitions~\ref{defBLN},~\ref{Gentropysolution},~\ref{adaptedentropysolution}.

Moreover, such solutions depend continuously on the initial data in the localized $\Lp1$ sense. If  $\vec \rho$ and $\vec {\hat \rho}$ are the admissible solutions corresponding respectively to the initial conditions $\vec u_0$ and $\vec v_0$, then for all $M>0$ and $t<M/L$, where $L=\max \{ \|f_h'\|_{\Lp\infty([0,R];\eR)}\,|\, h=1,\ldots,m+n\}$,
 \begin{equation}
  \begin{aligned}
   & \sum_{i=1}^{m} \int_{-(M-Lt)}^0 |\rho_{i}(t,x) - \hat \rho_{i}(t,x)|\,dx + \sum_{j=m+1}^{m+n} \int_0^{M-Lt} |\rho_{j}(t,x) - \hat \rho_{j}(t,x)|\,dx \\
        & \qquad\qquad \leq   \sum_{i=1}^{m} \int_{-M}^0 |u^0_{i}(x) - v^0_{i}(x)|\,dx + \sum_{j=m+1}^{m+n} \int_0^M |u^0_{j}(x) - v^0_{j}(x)|\,dx. \label{eq:L1loc-contraction}
  \end{aligned}
\end{equation}
In particular, the map that associates to $\vec{u}_0$ the unique corresponding admissible profile $\vec{\rho}(t)$, is non-expansive w.r.t. the $\Lp1$ distance for all $t>0$.
\end{theorem}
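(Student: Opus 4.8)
The plan is to split the statement into three assertions—existence, uniqueness, and the localized $\Lp1$-contraction \eqref{eq:L1loc-contraction}—and to treat stability and uniqueness jointly, since uniqueness is the special case $\vec u_0=\vec v_0$ of the contraction estimate.

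For the contraction I would work with the $\mathcal G_{VV}$-entropy formulation (Definition~\ref{Gentropysolution}) and derive a global Kato inequality by Kruzhkov's doubling of variables. Away from the junction each $\rho_h$ and $\hat\rho_h$ is a Kruzhkov entropy solution, so doubling the $(t,x)$ variables on the interior of each $\Omega_h$ yields the local dissipation inequality $\pt|\rho_h-\hat\rho_h| + \px q_h(\rho_h,\hat\rho_h)\leq 0$ in $\mathcal D'(\eR_+\times \mathrm{int}\,\Omega_h)$. Testing against $\xi(t)\chi_\alpha(x)$ with $\chi_\alpha$ as in \eqref{eq:chialpha} and letting $\alpha\to 0$, the strong traces \eqref{traces} let me collect the junction contribution as
\[
\int_{\eR_+}\Delta(\vec{\gamma\rho}(t),\vec{\gamma\hat\rho}(t))\,\xi(t)\,dt,\qquad
\Delta(\vec{\gamma\rho},\vec{\gamma\hat\rho})=\sum_{i=1}^{m}q_i(\gamma_i\rho_i,\gamma_i\hat\rho_i)-\sum_{j=m+1}^{m+n}q_j(\gamma_j\rho_j,\gamma_j\hat\rho_j),
\]
the signs being produced by the outward normals ($+$ at $x=0^-$ on the incoming roads, $-$ at $x=0^+$ on the outgoing ones). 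The decisive step is that, by Definition~\ref{Gentropysolution}, $\vec{\gamma\rho}(t),\vec{\gamma\hat\rho}(t)\in\mathcal G_{VV}$ for a.e.\ $t$, so Lemma~\ref{lem:L1D} gives $\Delta\geq 0$ and the junction term has the favourable sign. This yields the global Kato inequality; integrating it against a test function supported in the backward cone $\{|x|\le M-Lt\}$, by finite propagation speed with $L=\max_h\|f_h'\|_{\infty}$, produces exactly \eqref{eq:L1loc-contraction}, and uniqueness is the case $\vec u_0=\vec v_0$.

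For existence I would construct the solution as the limit of a monotone conservative finite volume (Godunov-type) scheme built on the junction fluxes $\vec G^*$ of \eqref{eq:junctionsolver-def}. Lemma~\ref{pexists-Part2}(i) ensures that each $G^*_h$ is one-sided Lipschitz in its own variable and monotone in the others, while the last identity in \eqref{defF} encodes exact conservation at the junction; together with consistency this makes the scheme monotone in the sense of \cite{CrandallMajda} and shows that $[0,R]$ is an invariant region, giving a uniform $\Lp\infty$ bound. Discrete entropy inequalities for monotone schemes, combined with a discrete analogue of Lemma~\ref{lem:L1D} at the junction, should yield the adapted entropy inequalities \eqref{eq:adp} in the limit, so that any limit point is an adapted entropy solution (Definition~\ref{adaptedentropysolution}); compactness of the approximations (via a $\BV$ estimate, or an $\Lp1$-compactness argument of Panov type exploiting $(NLD)$, cf.\ \cite{Panov}) provides such a limit point. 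By Theorem~\ref{thm:TFAE} the constructed function is admissible in all three equivalent senses, which completes existence.

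The main obstacle I expect is the rigorous passage to the traces in the Kato inequality: isolating the junction term as $\alpha\to0$ requires the strong $\Lp1$ traces \eqref{traces} (available thanks to $(NLD)$ and \cite{Panov}) together with careful bookkeeping of the boundary orientations, so that the incoming/outgoing sign pattern matches exactly the definition of $\Delta$ in \eqref{toward_kato}. Once this is done, everything hinges on the sign of $\Delta$, which is precisely the content of Lemma~\ref{lem:L1D}. On the existence side, the analogous delicate point is verifying that the discrete junction coupling dissipates the adapted entropies, i.e.\ reproducing Lemma~\ref{lem:L1D} at the discrete level for the scheme fluxes $\vec G^*$.
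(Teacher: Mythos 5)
Your proposal follows essentially the same two-part strategy as the paper: uniqueness and \eqref{eq:L1loc-contraction} are obtained from a Kato inequality (doubling of variables on each road, junction term isolated via the strong traces and made nonnegative by Lemma~\ref{lem:L1D}, then a test function adapted to the cone $|x|\le M-Lt$), while existence comes from the monotone, conservative Godunov-type scheme built on $\vec G^*$, whose well-balancedness and discrete Kato inequalities (with $\hat{\vec u}_0=\vec k\in\mathcal G_{VV}$, an exact discrete solution) yield the adapted entropy inequalities \eqref{eq:adp} in the limit — exactly the content of Proposition~\ref{thm:uniqueness}, Lemma~\ref{lem:well-balance}, Proposition~\ref{prop:discreteKato} and Theorems~\ref{thm:convergence_scheme},~\ref{thm:TFAE}. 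The one technical slip is the cutoff: the interior dissipation inequality cannot be tested against $\xi(t)\chi_\alpha(x)$ from \eqref{eq:chialpha}, which equals $1$ at the junction and so is not an admissible test function for inequalities holding only in $\mathcal D'(\eR_+\times\mathrm{int}\,\Omega_h)$; one must instead use the complementary truncation \eqref{eq:truncation-test}, which vanishes near $x=0$, and recover the junction term $\int_{\eR_+}\Delta(\vec{\gamma\rho}(t),\vec{\gamma\hat\rho}(t))\,\xi(t,0)\,dt$ from the singular part of $\xi_{k,x}$ as $k\to0$ — a purely cosmetic repair that leaves your argument intact.
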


We justify the uniqueness of a solution admissible in the sense of Definitions~\ref{defBLN},~\ref{Gentropysolution},~\ref{adaptedentropysolution} using
Definition~\ref{Gentropysolution}. The existence of an admissible solution (which is also justified in Section~\ref{sec:viscosity} using the more technical vanishing viscosity method) is proved using Definition~\ref{adaptedentropysolution}, on the basis of a straightforward finite volume approximation with Godunov fluxes (including the Godunov fluxes at the junction discussed in Remark~\ref{rem:junctionGodunov}).

\subsection{Uniqueness of admissible solutions of \eqref{eq:basic}}
The goal of this section is to prove the following stability result
\begin{proposition}\label{thm:uniqueness}
    For any given initial condition $\vec u_0 = (u^0_1, \ldots, u^0_{m+n})$ in $\Lp\infty(
     \Gamma; \eR^{m+n})$ there exists at most one $\mathcal{G}_{VV}$-entropy solution $\rho$ in  $\Lp\infty(\eR_+\times
     \Gamma ; [0,R]^{m+n})$.

    Moreover, if  $\vec \rho$ and $\vec {\hat \rho}$ are the $\mathcal{G}_{VV}$-entropy solutions corresponding respectively to the initial conditions $\vec u_0$ and $\vec v_0$, \eqref{eq:L1loc-contraction} holds and in particular, the $\Lp1$-contraction estimate holds whenever the right-hand side is finite:
    \begin{equation}\label{adaptedcontraction}
      \begin{aligned}
        \sum_{i=1}^{m} \norma{\rho_{i}(t) - \hat \rho_{i}(t)}_{\Lp1(\eR_- ; \eR)} &+ \sum_{j=m+1}^{m+n} \norma{\rho_{j}(t) - \hat \rho_{j}(t)}_{\Lp1(\eR_+ ; \eR)}\\
        & \leq   \sum_{i=1}^{m} \norma{u^0_{i} - v^0_{i}}_{\Lp1(\eR_- ; \eR)} + \sum_{j=m+1}^{m+n} \norma{u^0_{j} - v^0_{j}}_{\Lp1(\eR_+ ; \eR)}.
      \end{aligned}
    \end{equation}
  \end{proposition}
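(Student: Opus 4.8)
The plan is to prove the $\Lp1$-contraction estimate \eqref{eq:L1loc-contraction} via a Kato-type inequality obtained by the doubling-of-variables technique, using the $\mathcal G_{VV}$-entropy formulation (Definition~\ref{Gentropysolution}) because it places the full burden of junction-admissibility on the membership $\vec{\gamma\rho}(t)\in\mathcal G_{VV}$, which is exactly where Lemma~\ref{lem:L1D} applies. Uniqueness is the special case $\vec u_0=\vec v_0$. First I would fix two admissible solutions $\vec\rho$ and $\vec{\hat\rho}$ and, for each road $h$, run Kruzhkov's doubling of the time/space variables on the Kruzhkov inequalities \eqref{entropycondition} satisfied by $\rho_h$ and $\hat\rho_h$ \emph{separately on each $\Omega_h$}. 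Away from the junction this is the classical argument and yields, after passing to the diagonal, the local inequality
\begin{equation*}
\pt |\rho_h-\hat\rho_h| + \px q_h(\rho_h,\hat\rho_h) \leq 0 \qquad\text{in } \mathcal D'(\eR_+\times\mathrm{int}\,\Omega_h),
\end{equation*}
where $q_h(u,v)=\sign(u-v)(f_h(u)-f_h(v))$ is the Kruzhkov entropy flux already introduced in the paper.

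The crux is to handle the junction $x=0$, and this is where the strong-trace property \eqref{traces} and Lemma~\ref{lem:L1D} enter. The idea is to test the above road-wise inequalities with a test function concentrating near $x=0$, exactly as in the equivalence proof of Theorem~\ref{thm:TFAE}: using $\xi(t)\chi_\alpha(x)$ with $\chi_\alpha$ as in \eqref{eq:chialpha} and letting $\alpha\to 0^+$, the interior entropy dissipation inequalities produce, for a.e.\ $t$, a boundary defect term carried by the traces, namely
\begin{equation*}
\int_{\eR_+}\Biggl(\sum_{i=1}^{m} q_i(\gamma_i\rho_i(t),\gamma_i\hat\rho_i(t)) - \sum_{j=m+1}^{m+n} q_j(\gamma_j\rho_j(t),\gamma_j\hat\rho_j(t))\Biggr)\xi(t)\,dt \;\geq\; 0.
\end{equation*}
Now both trace vectors $\vec{\gamma\rho}(t)$ and $\vec{\gamma\hat\rho}(t)$ lie in $\mathcal G_{VV}$ for a.e.\ $t$, so Lemma~\ref{lem:L1D} applied with $\vec k_1=\vec{\gamma\rho}(t)$, $\vec k_2=\vec{\gamma\hat\rho}(t)$ gives precisely that the integrand is nonnegative, i.e.\ $\Delta(\vec{\gamma\rho}(t),\vec{\gamma\hat\rho}(t))\geq 0$. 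This is the Kato inequality \eqref{eq:Kato} announced in Remark~\ref{rem:linkstoDFSCL}: the junction produces no adverse contribution, so the combined entropy flux through $x=0$ dissipates the $\Lp1$ discrepancy rather than creating it.

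Having assembled the local dissipation on each road together with the favorable sign of the junction term, I would integrate against the standard finite-speed-of-propagation test function adapted to the graph: take $\xi(t,x)=\min\{1,(M-Lt-|x|)^+/\delta\}$ (or a smooth analogue), supported in the backward cone $|x|\leq M-Lt$, with $L=\max_h\|f_h'\|_\infty$ as in the statement. Summing the road-wise inequalities and using $|q_h(u,v)|\leq L|u-v|$ to control the spatial-flux terms by the slope of the cone, the cone geometry absorbs the flux contributions and one obtains \eqref{eq:L1loc-contraction} after a Gronwall/monotonicity argument in $t$; sending $M\to\infty$ when the data difference is globally integrable yields the global $\Lp1$-contraction \eqref{adaptedcontraction}, and uniqueness follows by taking $\vec u_0=\vec v_0$. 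The main obstacle I anticipate is the rigorous justification of the boundary-trace limit $\alpha\to 0^+$: one must legitimately replace $\rho_h(t,x)$ near $x=0$ by its strong trace $\gamma_h\rho_h(t)$ inside the entropy-flux term, which is exactly what the strong $\Lp1$-trace property \eqref{traces} (valid thanks to the non-degeneracy \textbf{(NLD)}) guarantees, but care is needed because the doubled entropy flux $q_h(\rho_h,\hat\rho_h)$ involves two solutions, so one must first carry the trace limit through the doubled-variables integrals before collapsing to the diagonal, ensuring the traces of both $\rho_h$ and $\hat\rho_h$ are reached simultaneously.
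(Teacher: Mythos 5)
Your proposal follows, in substance, exactly the paper's own route: Kruzhkov doubling on each road to get the interior inequalities $\pt|\rho_h-\hat\rho_h|+\px q_h(\rho_h,\hat\rho_h)\le 0$, extraction of a junction term carried by the strong traces, Lemma~\ref{lem:L1D} applied to $\vec{\gamma\rho}(t),\,\vec{\gamma\hat\rho}(t)\in\mathcal G_{VV}$ to give that term the favourable sign (this is precisely the paper's Kato inequality \eqref{eq:Kato}), and finally the Kruzhkov cone argument yielding \eqref{eq:L1loc-contraction}, then \eqref{adaptedcontraction} as $M\to\infty$, and uniqueness by taking $\vec u_0=\vec v_0$. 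One step would fail as literally written, though: the doubled inequalities hold only in $\mathcal D'(\eR_+\times \mathrm{int}\,\Omega_h)$, so you cannot test them with $\xi(t)\chi_\alpha(x)$, which does not vanish near $x=0$; doing so is either inadmissible or circular, since validity of the inequality for test functions nonzero at the junction is exactly what has to be proved. The cutoff must be inverted: one tests with a function vanishing at the junction, as the paper does with $\xi_k=\xi\,\min\{1,(|x|-k)^+/k\}$ in \eqref{eq:truncation-test}, and the trace term then arises from the derivative of the cutoff as $k\to0^+$; the choice $\xi\chi_\alpha$ is appropriate in the proof of Theorem~\ref{thm:TFAE} only because there one starts from the adapted inequality \eqref{eq:adp}, which already admits test functions supported across the junction. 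Finally, the obstacle you anticipate about carrying trace limits through the doubled-variable integrals does not arise if you follow the paper's order of operations: collapse to the diagonal first, in the interior of each road, and only then extract traces of the diagonal flux $q_h(\rho_h,\hat\rho_h)$; since $q_h$ is Lipschitz in both arguments, the strong traces of $\rho_h$ and $\hat\rho_h$ provided by \eqref{traces} immediately give $q_h(\gamma_h\rho_h,\gamma_h\hat\rho_h)$ as the strong trace of $q_h(\rho_h,\hat\rho_h)$.
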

\begin{proof}
  The first part of the proof is devoted to establish a Kato's type inequality for $\mathcal{G}_{VV}$-entropy solutions.

Let $\xi$ be a test function in $\mathcal{D}(\eR_+\times\eR ; \eR_+)$ and define, for $k>0$,
\begin{equation}\label{eq:truncation-test}
  \xi_k(t,x) = \xi(t,x)\min\left\{1,\frac{(|x| - k)^+}{k}\right\}.
\end{equation}
Then, as $\rho_h$ and $\hat \rho_h$ are Kruzhkov entropy weak solutions in the interior of $\Omega_h$, by a standard doubling of variable argument we get
\begin{equation}\label{presquekato}
  \sum_{h=1}^{m+n}\left(-\int_{\eR_+}\!\!\int_{\Omega_h} \!\!\left\{|\rho_h - \hat \rho_h|\xi_{k,t} +q_h(\rho_h ,\hat \rho_h)\xi_{k,x}\right\}\,dx\,dt - \int_{\Omega_h} \!\!|u^0_h(x) - v^0_h(x)|\xi_k(0, x)\,dx \right)\leq 0.
\end{equation}
An explicit computation shows that
\begin{equation}
  \xi_{k,x}(t,x) = \xi_x (t,x)\min\left\{1,\frac{(|x| - k)^+}{k}\right\} + \frac{1}{k}\xi(t,x) (\mathds{1}_{]k, 2k[}(x)- \mathds{1}_{]-2k, -k[}(x) ),
\end{equation}
then \eqref{presquekato} rewrites as
\begin{equation}\label{kkato}
  \begin{aligned}
    &\sum_{h=1}^{m+n}\left(-\int_{\eR_+}\int_{\Omega_h} \left\{|\rho_h - \hat \rho_h|\xi_{k,t} +q_h(\rho_h ,\hat \rho_h)\xi_{x}\min\left\{1,\frac{(|x| - k)^+}{k}\right\}\right\}\,dx\,dt\right) \\
&+\sum_{h=1}^{m+n}\frac{1}{k}\left(\int_{\eR_+}\int_{\Omega_h\cap]-2k, -k[}\!\! q_h(\rho_h ,\hat \rho_h)\xi \,dx\,dt-\int_{\eR_+}\int_{\Omega_h\cap]k, 2k[}\!\! q_h(\rho_h ,\hat \rho_h)\xi \,dx\,dt\right)\\
&+\sum_{h=1}^{m+n}\left(- \int_{\Omega_h} |u^0_h(x) - v^0_h(x)|\xi_k(0, x)\,dx\right)
\leq 0.
  \end{aligned}
\end{equation}
We have that
\begin{equation}
  \begin{aligned}
    \lim_{k\to 0^+} \sum_{h=1}^{m+n}\frac{1}{k}\left(\int_{\eR_+}\int_{\Omega_h\cap]-2k, -k[}\!\! q_h(\rho_h ,\hat \rho_h)\xi \,dx\,dt-\int_{\eR_+}\int_{\Omega_h\cap]k, 2k[}\!\! q_h(\rho_h ,\hat \rho_h)\xi \,dx\,dt\right)\\
    =\int_{\eR_+} \left(\sum_{i=1}^{m} q_i(\gamma^i\rho_i ,\gamma^i\hat \rho_i)-\sum_{j=m+1}^{m+n}q_j(\gamma^j\rho_j ,\gamma^j\hat \rho_j)\right)\xi(t,0)\,dt,
  \end{aligned}
\end{equation}
and this term is positive due to the positivity assumption on $\xi$ and the fact that $\vec{\gamma\rho}$ and $\vec{\gamma\hat \rho}$ are in $\mathcal{G}_{VV}$.
 Therefore by taking the limit as $k$ tends to $0$ of the whole left hand side in \eqref{kkato} one can conclude that
 \begin{equation}\label{eq:Kato}
    \sum_{h=1}^{m+n}\left(-\int_{\eR_+}\int_{\Omega_h} \left\{|\rho_h - \hat \rho_h|\xi_{t} +q_h(\rho_h ,\hat \rho_h)\xi_{x}\right\}\,dx\,dt - \int_{\Omega_h} |u^0_h(x) - v^0_h(x)|\xi(0, x)\,dx\right)
\leq 0,
 \end{equation}
which is Kato's inequality adapted to our setting.

The conclusion of the proof is classical, following \cite{Kruzkov}. One takes $\xi(t,x)$ approximating the characteristic function of the trapezoid
$\{(t,x)\in\eR_+\times\eR;\; |x|\leq M+Lt\}$ where $L=\max_{h\in\{1,\ldots,m+n\}} \max_{u\in[0,R]} |f_h'(u)|$ is the appropriate Lipschitz constant and $M$ is a nonnegative parameter. For a.e. fixed $t\in \eR_+$, this yields for $M>Lt$ the inequality \eqref{eq:L1loc-contraction}.

As $M\to +\infty$, we find \eqref{adaptedcontraction} as soon as its right-hand side is finite.
\end{proof}
\subsection{A finite volume numerical scheme and existence of an admissible solution}

In this section we describe a finite volume numerical scheme for the junction based on Godunov fluxes. Discretizing a fixed initial datum, we prove convergence of the discrete solutions to the unique admissible solution. We stress that there exist other numerical schemes which can be used in practice to approximate admissible solutions of \eqref{eq:basic}: we refer to \cite{AC_transmission} for the case $m=n=1$. The choice of Godunov's fluxes is motivated by the fact that this scheme is well-balanced, i.e., all admissible stationary solutions are scheme's exact solutions. The proofs of our theoretical results are easier in this setting. For other schemes based on monotone numerical fluxes (e.g., Rusanov flux), convergence to an admissible solution can also be proved, but the stationary solutions should replaced by numerical profiles (cf. Section~\ref{sec:viscosity} where analogous viscous profiles are constructed).

We fix a space step $\Delta x$. For $\ell\in \eZ$ and $h\in\{1,\ldots,m+n\}$, set $x^h_\ell:=\ell\Delta x$. We consider the uniform spatial mesh on each $\Omega_h$
\begin{align}
  &\bigcup_{\ell\leq -1} (x^i_\ell,x^i_{\ell+1})  \qquad\text{on } \Omega_i, \quad \text{for } i\leq m, \\
 &\bigcup_{\ell\geq 0} (x^j_\ell,x^j_{\ell+1})  \qquad\text{on } \Omega_j, \quad \text{for } j\geq m+1,
\end{align}
 so that the position of the junction $x=0$ on the road $\Omega_h$ corresponds to $x^h_0$.
 Then we fix a time step $\Delta t$ satisfying the CFL condition
 \begin{equation}\label{eq:CFL}
   \Delta t \leq \frac{\max_h\{L_h\}}{2}\Delta x,
 \end{equation}
 where $L_h$ is the Lipschitz constant of $f_h$.

At all cell interfaces $x^h_\ell$ for $\ell\neq 0$ we consider the standard Godunov flux $G_h$ corresponding to the flux $f_h$. At the junction $x_0^h$ we take on each road $\Omega_h$ the Godunov flux $G_h^*$ corresponding to the admissible solution of the Riemann problem at the junction, see Remark~\ref{rem:junctionGodunov}. More precisely, let $\vec{u_0}= (u^0_1,\ldots, u^0_{m+n})$ be an initial condition in $\Lp\infty(\Gamma; [0,R]^{m+n})$. We initialize our scheme by discretizing the initial conditions
\begin{equation}\label{eq:ICscheme}
  u_{k+\frac{1}{2}}^{h, 0} = \frac{1}{\Delta x} \int_{x^h_\ell}^{x^h_{\ell+1}} u^0_h(x)\, dx ,
\end{equation}
for all $h = 1, \ldots , m+n$ and for $\ell\leq -1$ if $h\leq m$, $\ell\geq 0$ if $h\geq m+1$.

Then $\left( u_{\ell+\frac{1}{2}}^{h, s+1} \right)_{h,\ell}$ is obtained from $\left( u_{\ell+\frac{1}{2}}^{h, s} \right)_{h,\ell}$ through the following scheme
\begin{enumerate}
\item[(I)] We solve
\begin{equation}\label{eq:p-step-scheme}
 \text{find $p^s$ in $[0,R]$ s.t.}\;\; \sum_{i=1}^{m}G_i(u_{-\frac{1}{2}}^{i, s}, p^s) - \sum_{j=m+1}^{m+n}G_j(p^s, u_{\frac{1}{2}}^{j, s})=0,
\end{equation}
i.e., setting
\begin{equation}\label{eq:vec u star}
  \vec u^{\,*,s}:=\Bigl(u_{-\frac{1}{2}}^{1, s},\ldots,u_{-\frac{1}{2}}^{m, s},u_{\frac{1}{2}}^{m+1, s},u_{\frac{1}{2}}^{m+n, s}  \Bigr)
\end{equation}
we take $p^s:=p_{\vec u^{\,*,s}}$ according to the notation of Lemma~\ref{pexists}.
\item[(II)] We compute
  \begin{equation}\label{eq:main-scheme}
    u_{\ell+\frac{1}{2}}^{h, s+1} =  u_{\ell+\frac{1}{2}}^{h, s} -\frac{\Delta t}{\Delta x} \left( \mathcal{F}^{h, s}_{\ell+1}  - \mathcal{F}^{h, s}_{\ell} \right),
  \end{equation}
where
\begin{equation}\label{eq:schemefluxes}
  \mathcal{F}^{h,s}_{\ell}  =
  \begin{cases}
    G_h(u_{\ell-\frac{1}{2}}^{h, s},u_{\ell+\frac{1}{2}}^{h, s}) \quad&\quad\text{if } h\leq m \:\text{ and }\ell\leq -1  \:\text{ or } h\geq m+1 \:\text{ and }\ell\geq 1, \\
    G_h(u_{-\frac{1}{2}}^{h, s}, p^s)\equiv G^*_h(\vec u^{\,*,s}) &\quad\text{if } h\leq m \:\text{ and }\ell=0,  \\
    G_h(p^s,u_{\frac{1}{2}}^{h, s})\equiv G^*_h(\vec u^{\,*,s}) &\quad\text{if } h\geq m+1 \:\text{ and }\ell=0.
  \end{cases}
\end{equation}
\end{enumerate}
The stage (II) is a standard marching scheme, up to the specific definition of the fluxes for $\ell=0$.  The stage (I) is implicit: once per time step, we have to find a zero of a scalar nonlinear function. Moreover, the nonlinear term in (I) is monotone and continuous
(see Lemma~\ref{pexists} and its proof) but it is not everywhere differentiable, since $G_h$ are not everywhere differentiable. In practice,
the value $p^s$ can be efficiently computed using the \emph{regula falsi} method, cf. \cite{AC_transmission}.

We introduce the notation
$$\Gamma_{\rm{discr}}=\Bigl(\{1,\ldots,m\}\times \{\ell\in\eZ,\;\ell\leq -1\}\Bigr)\bigcup \Bigl(\{m+1,\ldots,m+n\}\times\{\ell\in\eZ,\;\ell\geq 0\}\Bigr),$$
for the set of all degrees of freedom at a fixed time step; the notation
$$
U^s= \Bigl( u^{h,s}_{\ell+\frac 12}\Bigr)_{(h,\ell)\in \Gamma_{\rm{discr}}}
$$
for the set of all the unknowns of the scheme at time step $s$; and the notation $\mathcal{S}^{\Delta x}\vec{u_0}$ to indicate the piecewise constant function corresponding to the discrete solution $\left( u_{\ell+\frac{1}{2}}^{h, s} \right)_{h,\ell,s}=(U^s)_s$: $\mathcal{S}^{\Delta x}\vec{u_0}=(u^{1,\Delta x},\ldots,u^{m,\Delta x},u^{m+1,\Delta x},\ldots,u^{m+n,\Delta x})$ where
\begin{equation}
  \begin{aligned}
 & u^{i,\Delta x}= \sum_{s\in\eN, \: \ell\leq -1} u_{\ell+\frac{1}{2}}^{i, s} \mathds{1}_{\Omega_i\cap ]\ell, \ell+1[}(x)\mathds{1}_{]s, s+1[}(t),&\quad i\in\{1,\ldots,m\},\\
 & u^{j,\Delta x}=\sum_{s\in\eN, \: \ell\geq 0} u_{\ell+\frac{1}{2}}^{j, s} \mathds{1}_{\Omega_j\cap ]\ell, \ell+1[}(x)\mathds{1}_{]s, s+1[}(t),&\quad  j\in\{m+1,\ldots,m+n\}.
  \end{aligned}
\end{equation}
\begin{lemma}\label{lem:well-balance}
The above finite volume numerical scheme is well-balanced, i.e., every element of $\mathcal{G}_{VV}$ corresponds to a stationary, constant per road solution of the scheme.
\end{lemma}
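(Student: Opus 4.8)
The plan is to show that each $\vec k=(k_1,\ldots,k_{m+n})\in\mathcal G_{VV}$, used as road-wise constant initial data $u_{\ell+\frac{1}{2}}^{h,0}=k_h$ for all $(h,\ell)\in\Gamma_{\rm{discr}}$, is reproduced exactly by the scheme at every time step. I would argue by induction on the time index $s$, the inductive hypothesis being $u_{\ell+\frac{1}{2}}^{h,s}=k_h$ for all admissible $(h,\ell)$. Since the update \eqref{eq:main-scheme} is conservative, the inductive step reduces to checking that every numerical flux $\mathcal F^{h,s}_\ell$ in \eqref{eq:schemefluxes} equals $f_h(k_h)$, as this forces all flux differences $\mathcal F^{h,s}_{\ell+1}-\mathcal F^{h,s}_\ell$ to vanish.

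First I would treat the interior interfaces $\ell\neq0$: there both arguments of the Godunov flux equal $k_h$ under the inductive hypothesis, so consistency $G_h(k_h,k_h)=f_h(k_h)$ gives $\mathcal F^{h,s}_\ell=f_h(k_h)$ directly. The substantive point is the junction interface $\ell=0$. Here I would first note that, with constant data, the vector $\vec u^{\,*,s}$ of \eqref{eq:vec u star} coincides with $\vec k$. The definition \eqref{definitiongerm} of $\mathcal G_{VV}$ furnishes a $p\in[0,R]$ satisfying $\sum_i G_i(k_i,p)=\sum_j G_j(p,k_j)$, so this $p$ solves the step-(I) equation \eqref{eq:p-step-scheme} and $\mathcal P_{\vec k}$ is nonempty. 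For this same $p$ the germ conditions give $G_i(k_i,p)=f_i(k_i)$ and $G_j(p,k_j)=f_j(k_j)$; combined with Lemma~\ref{pexists}(ii), which ensures that the junction flux values $G^*_h(\vec k)$ are independent of the particular zero $p^s$ selected in step (I), this yields $\mathcal F^{h,s}_0=G^*_h(\vec k)=f_h(k_h)$ for every road $h$.

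With all fluxes identified as $f_h(k_h)$, each difference in \eqref{eq:main-scheme} vanishes and $u_{\ell+\frac{1}{2}}^{h,s+1}=u_{\ell+\frac{1}{2}}^{h,s}=k_h$, closing the induction and exhibiting $\vec k$ as a stationary, constant-per-road discrete solution. Note that no use of the CFL condition \eqref{eq:CFL} is required, since exact preservation holds independently of the time step.

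The one delicate issue, which I would handle explicitly, is that step (I) need not determine $p^s$ uniquely, so one cannot simply set $p^s=p_{\vec k}$. The resolution is precisely Lemma~\ref{pexists}(ii), which guarantees that whichever $p^s\in\mathcal P_{\vec k}$ the scheme selects, the boundary fluxes $G^*_h(\vec k)$ are the same; membership $\vec k\in\mathcal G_{VV}$ then pins these common values to $f_h(k_h)$. Everything else follows from the consistency of the Godunov flux and the conservative form of the marching step.
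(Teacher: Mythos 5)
Your proof is correct and follows essentially the same route as the paper's: interior fluxes collapse by consistency $G_h(k_h,k_h)=f_h(k_h)$, and the junction flux equals $f_h(k_h)$ by the germ conditions in \eqref{definitiongerm} combined with the well-definedness of $\vec G^*$ from Lemma~\ref{pexists}(ii), so all flux differences in \eqref{eq:main-scheme} vanish. Your explicit treatment of the possible non-uniqueness of $p^s$ in step (I) is a point the paper handles only implicitly through the notation $G^*_i(\vec k)$, but it is the same mechanism.
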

\begin{proof}
Let $\vec{k}\in\mathcal{G}_{VV} $ and denote by $\vec k^{\Delta x}$ the associated discrete function with entries $u^{h,\Delta x}\equiv k_h$, $h\in\{1,\ldots,m+n\}$. Then $\mathcal{S}^{\Delta x}\vec k^{\Delta x}=\vec k^{\Delta x}$, i.e., $\vec k^{\Delta x}$ is a stationary solution of the scheme.
Indeed, consider for instance an incoming road $i$.
Obviously, for all $\ell< -1$ the first iteration of \eqref{eq:main-scheme} initialized with constant initial values $u^{i,0}_{\ell+\frac 12}=k_i$
yields
$$u^{i,1}_{\ell+\frac 12}=k_i - \frac{\Delta t}{\Delta x} (G_i(k_i,k_i)-G_i(k_i,k_i))=k_i.$$
Moreover, due to \eqref{eq:p-step-scheme}, by definition of $G^*_i$ and of $\mathcal G_{VV}$ we have
$$
G_i(k_i,k_i)=f(k_i)=G(k_i,p_{\vec k})=G^*_i(\vec k),
$$
so that for  $\ell= -1$ we still find
$$u^i_{1,-\frac 12}=k_i - \frac{\Delta t}{\Delta x} (G_i(k_i,k_i)-G^*_i(\vec k))=k_i.$$
Similarly, we find that  \eqref{eq:p-step-scheme}, \eqref{eq:main-scheme} preserves the constant value $k_j$ on the $j$th outgoing road.
\end{proof}

\begin{remark}\label{rem:monotonicity-of-scheme}
  Due to the classical properties of Godunov fluxes recalled in \S~\ref{ssec:Godunov}, to the CFL condition~\ref{eq:CFL} and to Lemma \ref{pexists-Part2}(i) (see also Remark~\ref{rem:junctionGodunov}), the scheme is monotone in the following sense:
  \begin{equation}\label{eq:scheme-H}
   \forall s\in \eN\; \forall (h,\ell)\in\Gamma_{\rm{discr}}\colon\;\;\;\; u^{h,s+1}_{\ell+\frac 12}=H^{h}_{\ell+\frac 12}( U^{s} )
 \end{equation}
 for some functions $H^h_{\ell+\frac 12}$ that are monotone non-decreasing with respect to each of the arguments (actually each of these functions depends on at most $(m+n+1)$ entries of $U^{s}$).
 This implies in particular the order-preservation property
 \begin{equation}\label{eq:scheme-monotone}
   \forall (h,\ell)\in\Gamma_{\rm{discr}}\colon\;\;\;\; u^{h,s}_{\ell+\frac 12}\geq \hat u^{h,s}_{\ell+\frac 12} \;\;\; \Rightarrow \;\;\; \forall (h,\ell)\in\Gamma_{\rm{discr}}\colon\;\;\;\; u^{h,s+1}_{\ell+\frac 12}\geq \hat u^{h,s+1}_{\ell+\frac 12}.
 \end{equation}

 Since, moreover, the scheme is locally conservative by definition (due, in particular, to the condition \eqref{eq:p-step-scheme}),
 and because of the Lipschitz continuity of $G_h$ for all $h$,
  the scheme is conservative. It follows by the Crandall-Tartar Lemma (see, e.g., \cite{CrandallMajda}) that the scheme is $\Lp1$-contractive in the sense that the discrete analogue
 of \eqref{adaptedcontraction} (with $\vec \rho$ replaced by $\mathcal{S}^{\Delta x}\vec{u_0}$) and the initial condition $\vec u_0$ replaced by the discretized initial condition) holds true. We need a bit more specific property, which is the numerical counterpart of the Kato inequalities \eqref{eq:Kato}, in order to justify convergence to an admissible solution.
 \end{remark}

  Slightly extending the usual formalism (see \cite{EGHbook}), let $\top$, resp., $\bot$ denote (component per component, in the case of vector-valued arguments) the maximum, resp. the minimum operation on real scalars, vectors or sequences: e.g.,
  $$
  (k_1,k_2,k_3)\bot(\hat k_1,\hat k_2,\hat k_3)=(\min\{k_1,\hat k_1\},\min\{k_2,\hat k_2\},\min\{k_3,\hat k_3\}).
  $$
\begin{proposition}\label{prop:discreteKato}
 For any initial conditions $\vec{u_0},\hat {\vec {u}}_0 $ in $\Lp\infty(\Gamma; [0,R]^{m+n})$ the corresponding discrete solutions 
  of the scheme \eqref{eq:ICscheme}--\eqref{eq:schemefluxes} satisfy \emph{discrete Kato inequalities}. Namely,
 for
 $(h,\ell)\in \Gamma_{\rm{discr}}$, let $Q^{h}_\ell[U^s,\hat U^s]$ be defined by
  \begin{align*}
   & Q^{h}_\ell[U^s,\hat U^s]  = G_h\Bigl(u^{h,s}_{\ell-\frac 12}\top \hat u^{h,s}_{\ell-\frac 12}, u^{h,s}_{\ell+\frac 12}\top \hat u^{h,s}_{\ell+\frac 12}\Bigr) -
     G_h\Bigl(u^{h,s}_{\ell-\frac 12}\bot \hat u^{h,s}_{\ell-\frac 12}, u^{h,s}_{\ell+\frac 12}\bot \hat u^{h,s}_{\ell+\frac 12}\Bigr),\;\;
      \ell\neq 0,\\
   & Q^{h}_0[U^s,\hat U^s] =  G^*_h(\vec u^{\,*,s}\top \hat{\vec u}^{\,*,s})-G^*_h(\vec u^{\,*,s}\bot \hat{\vec u}^{\,*,s}).
  \end{align*}
  Then for all $\xi\in \mathcal D(]0,+\infty[\times \eR)$ such that $\xi \geq 0$ and $\partial_x \xi=0$ on $[-\Delta x/2,\Delta x/2]$,
  setting $\xi^s_{\ell+\frac 12}:=\xi(s\Delta t,(\ell+\frac 12)\Delta x)$ we have
 \begin{align*}
  -& \sum_{i=1}^m \sum_{s=1}^{+\infty} \Delta t \sum_{\ell\leq -1} \Delta x |u^{i,s}_{\ell+\frac 12}-
   \hat u^{i,s}_{\ell+\frac 12}| \frac{\xi^{s+1}_{\ell+\frac 12}- \xi^{s}_{\ell+\frac 12}}{\Delta t}\\
& - \sum_{i=1}^m \sum_{s=1}^{+\infty} \Delta t \sum_{\ell\leq -1} \Delta x Q^{i}_\ell[U^s,\hat U^s] \frac{\xi^{s+1}_{\ell+\frac 12}- \xi^{s+1}_{\ell-\frac 12}}{\Delta x}\\
  &-\sum_{j=m+1}^{m+n} \sum_{s=1}^{+\infty} \Delta t \sum_{\ell\geq 0} \Delta x |u^{j,s}_{\ell+\frac 12}-
  \hat u^{j,s}_{\ell+\frac 12}| \frac{\xi^{s+1}_{\ell+\frac 12}- \xi^{s}_{\ell+\frac 12}}{\Delta t}\\
&  - \sum_{j=m+1}^{m+n} \sum_{s=1}^{+\infty} \Delta t \sum_{\ell\geq 1} \Delta x Q^{j}_\ell[U^s,\hat U^s] \frac{\xi^{s+1}_{\ell+\frac 12}- \xi^{s+1}_{\ell-\frac 12}}{\Delta x} \leq 0.
 \end{align*}
\end{proposition}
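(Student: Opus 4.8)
The plan is to deduce the discrete Kato inequalities from a cell-by-cell discrete entropy inequality, obtained through the monotonicity and conservativity of the scheme via the classical Crandall--Majda doubling device, followed by a discrete integration by parts (Abel summation) in which the only nonstandard feature is the treatment of the junction interface $\ell=0$. I would first establish the pointwise inequality and only then sum it against the test function.

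For the pointwise step, I would use that by Remark~\ref{rem:monotonicity-of-scheme} the one-step map has the form \eqref{eq:scheme-H} with each $H^h_{\ell+\frac12}$ non-decreasing in all its (at most $m+n+1$) arguments. Since $U^s\top\hat U^s$ and $U^s\bot\hat U^s$ dominate, resp. are dominated by, both $U^s$ and $\hat U^s$ componentwise, monotonicity gives
$$H^h_{\ell+\frac12}(U^s\top\hat U^s)\geq u^{h,s+1}_{\ell+\frac12}\top\hat u^{h,s+1}_{\ell+\frac12},\qquad H^h_{\ell+\frac12}(U^s\bot\hat U^s)\leq u^{h,s+1}_{\ell+\frac12}\bot\hat u^{h,s+1}_{\ell+\frac12},$$
whose difference bounds $|u^{h,s+1}_{\ell+\frac12}-\hat u^{h,s+1}_{\ell+\frac12}|$ from above. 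On the other hand, evaluating the conservative form \eqref{eq:main-scheme}--\eqref{eq:schemefluxes} at $U^s\top\hat U^s$ and at $U^s\bot\hat U^s$ and subtracting, that same difference equals $|u^{h,s}_{\ell+\frac12}-\hat u^{h,s}_{\ell+\frac12}|-\frac{\Delta t}{\Delta x}\bigl(Q^h_{\ell+1}[U^s,\hat U^s]-Q^h_{\ell}[U^s,\hat U^s]\bigr)$, with $Q^h_\ell$ exactly as in the statement (including the junction expression $Q^h_0$ built from $G^*_h$). Combining the two computations yields the cell-wise discrete entropy inequality
$$|u^{h,s+1}_{\ell+\frac12}-\hat u^{h,s+1}_{\ell+\frac12}|\leq |u^{h,s}_{\ell+\frac12}-\hat u^{h,s}_{\ell+\frac12}|-\frac{\Delta t}{\Delta x}\bigl(Q^h_{\ell+1}-Q^h_{\ell}\bigr),$$
valid at every cell, and in particular at the cells adjacent to the junction: there it is precisely the monotonicity of $G^*_h$ in all its arguments (Lemma~\ref{pexists-Part2}(i)), together with the CFL condition \eqref{eq:CFL}, that makes $H$ monotone.

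I would then multiply this inequality by $\Delta x\,\xi^{s+1}_{\ell+\frac12}\geq 0$ and sum over $(h,\ell)\in\Gamma_{\mathrm{discr}}$ and over $s\geq 0$. Abel summation in the time index turns $\sum_s(a^{s+1}-a^s)\xi^{s+1}$ into $-\sum_{s\geq 1}a^s(\xi^{s+1}-\xi^s)$, the boundary contribution at $s=0$ vanishing because $\xi\in\mathcal{D}(]0,+\infty[\times\eR)$; this produces the discrete time-derivative terms of the statement. Abel summation in the space index along each road telescopes $\sum_\ell \xi^{s+1}_{\ell+\frac12}(Q^h_{\ell+1}-Q^h_\ell)$ into the interior flux terms $-\sum_\ell Q^h_\ell(\xi^{s+1}_{\ell+\frac12}-\xi^{s+1}_{\ell-\frac12})$ (summed over $\ell\leq -1$ for incoming and over $\ell\geq 1$ for outgoing roads, exactly as written), plus one boundary term per road arising at the junction interface $\ell=0$, namely $+\xi^{s+1}_{-\frac12}Q^i_0$ for each incoming road $i$ and $-\xi^{s+1}_{\frac12}Q^j_0$ for each outgoing road $j$.

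The main point, and the only genuinely junction-specific step, is the cancellation of these boundary terms. Using the hypothesis $\partial_x\xi=0$ on $[-\Delta x/2,\Delta x/2]$, the two junction-adjacent nodal values coincide, $\xi^{s+1}_{-\frac12}=\xi^{s+1}_{\frac12}=:\xi^{s+1}_0$, so the accumulated boundary contribution at a given $s$ factors as $\xi^{s+1}_0\bigl(\sum_{i=1}^m Q^i_0-\sum_{j=m+1}^{m+n}Q^j_0\bigr)$. By the definition of $Q^h_0$ and the conservation identity \eqref{defF} for $F^*$, applied separately to $\vec u^{\,*,s}\top\hat{\vec u}^{\,*,s}$ and to $\vec u^{\,*,s}\bot\hat{\vec u}^{\,*,s}$, one has $\sum_i G^*_i=\sum_j G^*_j$ at both arguments, whence $\sum_i Q^i_0-\sum_j Q^j_0=0$. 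Thus the junction boundary terms vanish identically, and what remains is exactly the inequality claimed. I expect the bookkeeping of signs and index shifts in the space summation near $\ell=0$ to be the most error-prone part, but no new idea beyond the conservativity of $\vec G^*$ and the test-function condition is required.
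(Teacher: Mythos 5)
Your proposal is correct and follows essentially the same route as the paper's proof: the per-cell contraction inequality obtained from the monotonicity of the one-step maps $H^h_{\ell+\frac12}$ via the identity $|a-b|=a\top b-a\bot b$, multiplication by $\Delta t\,\Delta x\,\xi^{s+1}_{\ell+\frac12}$ and Abel summation in time and space, and cancellation of the junction boundary terms using $\xi^{s+1}_{-\frac12}=\xi^{s+1}_{\frac12}$ together with the conservativity of $\vec G^*$ evaluated at $\vec u^{\,*,s}\top\hat{\vec u}^{\,*,s}$ and $\vec u^{\,*,s}\bot\hat{\vec u}^{\,*,s}$. The sign bookkeeping in your space summation (boundary term $+\xi^{s+1}_{-\frac12}Q^i_0$ on incoming roads, $-\xi^{s+1}_{\frac12}Q^j_0$ on outgoing roads) matches the paper's identity \eqref{eq:entropyflux-conservativity} exactly.
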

Observe that in Proposition~\ref{prop:discreteKato}, we limit our attention to test functions constant in a neighbourhood of the junction. Thanks to this precaution, borrowed from \cite{AC_transmission}, and to the conservativity of the  Riemann solver at the junction the junction $\ell=0$
 does not contribute to the ``$Q[U,\hat U]\partial_x \xi$'' term of the discrete Kato inequality.
\begin{proof}
The argument is essentially classical in the context of monotone finite volume schemes.

 First, we state the ``per cell contraction principle'': for all $(h,\ell)\in \Gamma_{\rm{discr}}$
 \begin{equation}\label{eq:percell-contraction}
   \frac{|u^{h,s+1}_{\ell+\frac 12}- \hat u^{h,s+1}_{\ell+\frac 12}| - |u^{h,s}_{\ell+\frac 12}- \hat u^{h,s}_{\ell+\frac 12}| }{\Delta t} + \frac{Q^{h}_{\ell+1}[U^s,\hat U^s]-Q^{h}_\ell[U^s,\hat U^s]}{\Delta x} \leq 0,
 \end{equation}
 which readily follows from the observation that for all $a,b\in\eR$, $|a-b|=a\top b - a\bot b$ and from the monotonicity of $H^h_{\ell+\frac 12}$ in \eqref{eq:scheme-H}:
 \begin{align*}
& |u^{h,s+1}_{\ell+\frac 12}- \hat u^{h,s+1}_{\ell+\frac 12}|= u^{h,s+1}_{\ell+\frac 12} \top \hat u^{h,s+1}_{\ell+\frac 12}- u^{h,s+1}_{\ell+\frac 12} \bot \hat u^{h,s+1}_{\ell+\frac 12},\\
& u^{h,s+1}_{\ell+\frac 12} \top \hat u^{h,s+1}_{\ell+\frac 12}= H^{h}_{\ell+\frac 12}( U^{s} )\top H^{h}_{\ell+\frac 12}( \hat U^{s} )\leq H^{h}_{\ell+\frac 12}( U^{s}\top \hat U^{s} ),\\
&  u^{h,s+1}_{\ell+\frac 12} \bot \hat u^{h,s+1}_{\ell+\frac 12}= H^{h}_{\ell+\frac 12}( U^{s} )\bot H^{h}_{\ell+\frac 12}( \hat U^{s} )\geq H^{h}_{\ell+\frac 12}( U^{s}\bot \hat U^{s} ),\\
&  H^{h}_{\ell+\frac 12}( U^{s}\top \hat U^{s} )-H^{h}_{\ell+\frac 12}( U^{s}\bot \hat U^{s} )=|u^{h,s}_{\ell+\frac 12}- \hat u^{h,s}_{\ell+\frac 12}| - \frac{\Delta t}{\Delta x}\Bigl(Q^h_{\ell+1}[U^s,\hat U^s]-Q^h_{\ell}[U^s,\hat U^s] \Bigr),
 \end{align*}
 where the formula \eqref{eq:main-scheme} and the definition of $Q^h_{\ell}[U^s,\hat U^s]$ are used in the last line to express the function $H^h_{\ell+\frac 12}$.

 Second, observe that for all $\vec k,\hat{\vec k}$ we have by the conservativity property underlying the definition of $\vec G^{*}$ (see Remark~\ref{rem:junctionGodunov}):
 $$
 \sum_{i=1}^m G^{*}_h(\vec k\top \hat{\vec k}) - \sum_{j=m+1}^{m+n} G^{*}_h(\vec k\top \hat{\vec k})=0=
  \sum_{i=1}^m G^{*}_h(\vec k\bot \hat{\vec k}) - \sum_{j=m+1}^{m+n} G^{*}_h(\vec k\bot \hat{\vec k}).
 $$
 It remains to multiply the $(h,\ell)$'s inequality in \eqref{eq:percell-contraction} by the nonnegative quantity $\Delta t \Delta x\xi^{s+1}_{\ell+\frac 12}$ and sum up; the sum is finite since the support of $\xi$ is compact.
  Paying attention to the fact that
 $\xi^{s}_{-\frac 12}=\xi(s\Delta t,0)=\xi^{s}_{\frac 12}$,  from the definition of $Q^{h}_0[U^s,\hat U^s]$ we see that for all $s$,
 \begin{equation}\label{eq:entropyflux-conservativity}
  \sum_{i=1}^{m} \Delta x Q^{i}_0[U^s,\hat U^s] \xi^{s+1}_{-\frac 12}-
\sum_{j=m+1}^{m+n} \Delta x Q^{j}_0[U^s,\hat U^s] \xi^{s+1}_{\frac 12}=0.
\end{equation}
 Using the Abel transformation (discrete summation by parts) on each road with respect to the time superscripts $s$ and to the space subscripts $\ell$, bearing in mind
  \eqref{eq:entropyflux-conservativity} and the fact that for all $\ell\in \eZ$, $\xi^0_{\ell+\frac 12}=0$ by the choice of $\xi$, we derive the required discrete Kato inequality.
\end{proof}
\begin{theorem}\label{thm:convergence_scheme}
  Given an initial datum $\vec {u_0} \in \Lp\infty(\Gamma; [0,R]^{m+n})$, the numerical scheme \eqref{eq:ICscheme}--\eqref{eq:schemefluxes} converges to the unique admissible (in the sense of the equivalent Definitions~\ref{defBLN},~\ref{Gentropysolution},~\ref{adaptedentropysolution}) solution $\vec\rho$ of \eqref{eq:basic}, namely, $\mathcal{S}^{\Delta x}\vec{u_0}\to \vec \rho$ as $\Delta x=0$, subject to the CFL restriction \eqref{eq:CFL} on $\Delta t$.
 This ensures, in particular, existence of an admissible solution of \eqref{eq:basic} for every $\Lp\infty(\Gamma; [0,R]^{m+n})$ initial datum.
\end{theorem}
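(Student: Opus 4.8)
The plan is to run the classical convergence analysis for monotone conservative finite volume schemes, but phrased in the \emph{adapted}-entropy form dictated by Definition~\ref{adaptedentropysolution}, and then to identify the limit with the unique admissible solution via Theorem~\ref{thm:TFAE} and Proposition~\ref{thm:uniqueness}. The first step is to secure a uniform $\Lp\infty$ bound. I would observe that the constant states $\vec 0=(0,\ldots,0)$ and $\vec R=(R,\ldots,R)$ belong to $\mathcal G_{VV}$ (take $p=0$, respectively $p=R$, in \eqref{definitiongerm}, using $f_h(0)=0=f_h(R)$ from \textbf{(F)}). By Lemma~\ref{lem:well-balance} these are stationary discrete solutions, so the order-preservation property \eqref{eq:scheme-monotone} of Remark~\ref{rem:monotonicity-of-scheme} forces every discrete solution issued from $[0,R]$-valued data to remain valued in $[0,R]$ at all time steps. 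Hence $\mathcal{S}^{\Delta x}\vec{u_0}$ is bounded in $\Lp\infty$ uniformly in $\Delta x$, under the CFL restriction \eqref{eq:CFL}.

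Next I would extract the discrete adapted entropy inequalities. Fixing $\vec k\in\mathcal G_{VV}$ and letting $\hat U^s$ be the discretization of the stationary solution associated with $\vec k$, Lemma~\ref{lem:well-balance} guarantees $\hat U^s=\vec k^{\Delta x}$ for all $s$. Plugging this particular $\hat U$ into Proposition~\ref{prop:discreteKato} then produces, for every nonnegative test function that is constant near the junction, a discrete analogue of the adapted entropy inequality \eqref{eq:adp} relative to $\vec k$. These inequalities, together with a standard Lax--Wendroff consistency argument, are what will certify admissibility of the limit; in particular the case of general $\vec k$ (not necessarily constant per road equal to a single value) is exactly what encodes the junction coupling, unavailable to the per-road Kruzhkov inequalities alone.

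The crux of the proof, and the step I expect to be the main obstacle, is compactness: we dispose only of an $\Lp\infty$ bound, not of $\BV$ estimates, so Helly's theorem is unavailable and one cannot pass to the limit in the nonlinear flux terms by weak convergence alone. Here I would exploit the non-degeneracy hypothesis \textbf{(NLD)}. Away from the junction, the scheme on each road $\Omega_h$ is an ordinary monotone Godunov scheme for the genuinely nonlinear conservation law $\rho_{h,t}+f_h(\rho_h)_x=0$; combining the discrete entropy dissipation with \textbf{(NLD)} allows one to run a compensated-compactness argument (or, equivalently, to pass through an entropy-process/Young-measure limit and prove it reduces to a Dirac mass) and thereby extract a subsequence along which each component of $\mathcal{S}^{\Delta x}\vec{u_0}$ converges a.e.\ on $\Omega_h$ to some $\rho_h\in[0,R]$. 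The delicate point is the coupling at $\ell=0$: the implicit step (I) determines $p^s$ from the neighbouring traces, so I must verify that the junction fluxes $G_h^*(\vec u^{\,*,s})$ converge consistently. This is controlled by the continuity and one-sided Lipschitz estimates of $\vec G^*$ together with its conservativity (Lemma~\ref{pexists-Part2} and Remark~\ref{rem:junctionGodunov}), which ensure that the junction contributes no spurious entropy production in the limit.

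Finally, I would pass to the limit. The Lax--Wendroff argument shows that the a.e.\ limit $\vec\rho$ is a weak solution of \eqref{eq:basic} satisfying the Kruzhkov entropy inequalities \eqref{entropycondition} on each road (the first item of Definition~\ref{adaptedentropysolution}), while passing to the limit in the discrete inequalities of the second step yields \eqref{eq:adp} for every $\vec k\in\mathcal G_{VV}$. Thus $\vec\rho$ is an adapted entropy solution, hence admissible by Theorem~\ref{thm:TFAE}. Proposition~\ref{thm:uniqueness} guarantees uniqueness of such a solution, so every subsequence of $(\mathcal{S}^{\Delta x}\vec{u_0})_{\Delta x}$ admits a further subsequence converging to the same $\vec\rho$; therefore the whole family converges as $\Delta x\to 0$. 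This simultaneously establishes existence of an admissible solution of \eqref{eq:basic} for an arbitrary $\Lp\infty(\Gamma;[0,R]^{m+n})$ initial datum.
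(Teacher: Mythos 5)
Your proposal is correct, and its core identification argument is exactly the paper's: well-balancedness (Lemma~\ref{lem:well-balance}) plus the discrete Kato inequalities of Proposition~\ref{prop:discreteKato} applied with $\hat U^s\equiv \vec k^{\Delta x}$, $\vec k\in\mathcal G_{VV}$, yield the adapted entropy inequalities \eqref{eq:adp} in the limit, and Theorem~\ref{thm:TFAE} together with Proposition~\ref{thm:uniqueness} identifies the limit and upgrades subsequential to full-family convergence. Where you genuinely diverge is the compactness step and the handling of general data. The paper first treats compactly supported $\BV$ data, obtaining a.e.\ compactness by the $\BV_{loc}$ technique of \cite{BGKT08-JEM} (monotonicity and the Crandall--Tartar lemma give time-$\Lp1$-Lipschitz bounds, hence local spatial $\BV$ bounds away from the junction), and only then reaches $\Lp\infty$ data through two extra steps: density of compactly supported $\BV$ data in $\Lp1$ combined with the $\Lp1$-contractivity of both the continuous and discrete semigroups, and the finite domain of dependence \eqref{eq:L1loc-contraction}. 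You instead run compensated compactness (equivalently, Young-measure/entropy-process reduction to a Dirac mass) using \textbf{(NLD)} on each road, directly for $\Lp\infty$ data, which removes the need for the two-step extension. This route is viable: away from the junction the scheme is the standard monotone Godunov scheme, \textbf{(NLD)} is precisely the no-affine-interval hypothesis required for Tartar's reduction, and the limit on each road then has strong traces by \cite{Panov}. But it costs machinery the paper avoids: you must establish the discrete entropy-dissipation (``weak $\BV$'') estimates giving $H^{-1}_{loc}$-compactness of the scheme's entropy production (standard for monotone schemes, but nowhere stated in the paper), and you use \textbf{(NLD)} for compactness where the paper's $\BV_{loc}$ argument needs no nonlinearity. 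Two details you should make explicit: first, the limit passage gives \eqref{eq:adp} only for test functions whose $x$-derivative vanishes near the junction, so you need their density (in the $\Cp1$ topology) in $\mathcal D(]0,+\infty[\times\eR)$ to conclude, as the paper does; second, your concern about consistency of the junction fluxes $G^*_h$ is moot, since for such test functions the $\ell=0$ contributions cancel exactly by the conservativity identity \eqref{eq:entropyflux-conservativity}, which is the whole point of that restriction on $\xi$.
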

\begin{proof}
The proof follows the lines of the proof given in \cite{AC_transmission} (see also \cite{germes}). Let us only provide a sketch of the key arguments.

\noindent$\bullet$
  We start with compactly supported $\BV$ initial data. The compactness, in the sense of the a.e. convergence, of the family $\Bigl(\mathcal{S}^{\Delta x}\vec{u_0}\Bigr)_{\Delta x\in (0,1)}$ of discrete solutions is obtained with the $\BV_{loc}$ technique introduced in \cite{BGKT08-JEM}. It relies upon the monotonicity and the Crandall-Tartar lemma (see Remark~\ref{rem:monotonicity-of-scheme}).

  The compactness permits to define $\vec \rho$ as a limit of some sequence of discrete solutions $\mathcal{S}^{\Delta x_r}\vec{u_0}$, $\Delta_r\to 0$. At the end of the proof, having proved convergence to the unique admissible solution with datum $\vec {u_0}$, by the classical argument we can bypass the extraction of a sequence and get convergence of  $\mathcal{S}^{\Delta x}\vec{u_0}$, $\Delta x\to 0$.

\noindent$\bullet$ It is classical (see \cite{EGHbook}) to derive that for every $h\in\{1,\ldots,m+n\}$, $\rho_h$ fulfills the first property required in Definitions~\ref{defBLN},~\ref{Gentropysolution},~\ref{adaptedentropysolution}, namely, the Kruzhkov entropy inequalities \eqref{entropycondition} hold. In order to justify that $\vec\rho$ is an admissible solution, we just need to assess the second property in Definition~\ref{adaptedentropysolution}, i.e., the adapted entropy inequalities \eqref{eq:adp} that involve the junction.

\noindent$\bullet$ The main ingredient of the proof is the discrete Kato inequality proved in Proposition~\ref{prop:discreteKato}, where we choose $\hat {\vec {u}}_0 =\vec k$ with $\vec k\in \mathcal G_{VV}$. Observe that by Lemma~\ref{lem:well-balance}, we have  $\mathcal{S}^{\Delta x_r}\hat {\vec {u}}_0\equiv \vec k$; passing to the limit as $\Delta_r\to 0$, we will indeed derive the adapted entropy inequalities \eqref{eq:adp} for $\rho$ and complete the proof for compactly supported data of bounded variation.

  Following \cite{AC_transmission}, observe that test functions whose $x$-derivative vanishes near $x=0$ are dense (e.g., in the $\Cp1$ topology) in $\mathcal D(]0,+\infty[\times \R)$. Starting with the inequalities of Proposition~\ref{prop:discreteKato},
  with the same arguments as in the previous step (see \cite{EGHbook,AC_transmission}) we pass to the limit and find \eqref{eq:adp} first for such specific test functions $\xi$, and then (by density) for general test functions.

\noindent$\bullet$ Finally, as in \cite{germes,AC_transmission}, in two steps we extend the convergence result to general $\Lp\infty$ data $\vec{u_0}$. Extension
  to $\Lp1\cap \Lp\infty$ data follows by the density of compactly supported $\BV$ data in $\Lp1$ topology, with the help of $\Lp1$ contractivity of both the admissible solution semigroup and the discrete solutions semigroups. Extension to $\Lp\infty$ data is due to the property of finite domain of dependence
  \eqref{eq:L1loc-contraction} and to its discrete counterpart that follows from \eqref{eq:main-scheme} and from the CFL condition.
\end{proof}

\section{Vanishing viscosity limits are admissible solutions of \eqref{eq:basic}}\label{sec:viscosity}
This section is devoted to the proof that the solutions obtained as limit of vanishing viscosity approximations are
admissible solutions in the sense of Definitions~\ref{defBLN},~\ref{Gentropysolution},~\ref{adaptedentropysolution}, and are therefore unique.
The result follows from the combination of two ingredients: the construction of suitably many vanishing viscosity profiles, and the $\Lp1$ contraction
property known for the vanishing viscosity approximation. This ensures that vanishing viscosity solutions satisfy a family of adapted entropy inequalities
which is sufficiently large to fit the last claim of Theorem~\ref{thm:TFAE}.

\begin{proposition}\label{steadyviscprofiles}
    For any $\vec k$ in $\mathcal{G}_{VV}^o$,
    there exists  $p\in[0,R]$ and  $\vec{\rho^\eps} = (\rho^\eps_1, \ldots, \rho^\eps_{m+n})$  in $\Lp\infty(
    \Gamma; [0,R]^{m+n})$ such that 
    for all $h\in\{1,\ldots,m+n\}$, $\rho^\eps_h$ solves the ODE problem
    \begin{equation}\label{eq:steadyvisc}
      \begin{cases}
        f_h(\rho^\eps_h)_x = \eps (\rho^\eps_{h})_{xx},\quad \text{ in } \:\Omega_h\\
        \rho^\eps_h(0) = p,\\
        \lim_{x\in\Omega_h,\,|x|\to +\infty} \rho^\eps_h(x) = k_h.
      \end{cases}
    \end{equation}
\end{proposition}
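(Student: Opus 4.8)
The plan is to exploit the fact that, once the common junction value $p$ is fixed, the steady viscous system \eqref{eq:steadyvisc} decouples into $m+n$ independent scalar boundary value problems: the coupling survives only through the requirement $\rho^\eps_h(0)=p$ for all $h$ and through conservation of flux at $x=0$. I would take $p\in[0,R]$ to be precisely the value associated with $\vec k$ in the definition \eqref{eq:GVV-o} of $\mathcal{G}_{VV}^o$. Integrating the equation in \eqref{eq:steadyvisc} once, one finds the first integral $f_h(\rho^\eps_h)-\eps(\rho^\eps_h)_x\equiv C_h$; a bounded solution with $\rho^\eps_h\to k_h$ at infinity forces $C_h=f_h(k_h)$, so the problem reduces to the autonomous first-order ODE
\begin{equation*}
\eps\,(\rho^\eps_h)_x = f_h(\rho^\eps_h)-f_h(k_h), \qquad \rho^\eps_h(0)=p, \qquad \lim_{|x|\to+\infty}\rho^\eps_h=k_h.
\end{equation*}
Note that the first integral $f_h(\rho^\eps_h)-\eps(\rho^\eps_h)_x\equiv f_h(k_h)$ already encodes conservation of flux along each profile.

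For existence of the connecting profile I would run a phase-line analysis of this ODE. The right-hand side $f_h(\cdot)-f_h(k_h)$ vanishes at $k_h$, and the strict inequalities defining $\mathcal{G}_{VV}^o$ guarantee that it keeps a constant, nonzero sign on the open interval bounded by $k_h$ and $p$ (equivalently, $p$ lies strictly between $k_h$ and its flux-conjugate state). Hence the ODE admits a strictly monotone solution taking the value $p$ at $x=0$, which one writes down by separation of variables: for instance, for an incoming road $i$ with $k_i<p$, the profile is defined implicitly by $\int_{\rho^\eps_i(x)}^{p}\frac{\eps\,dr}{f_i(r)-f_i(k_i)}=|x|$ for $x\le 0$. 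The Lipschitz bound $|f_i(r)-f_i(k_i)|\le L_i|r-k_i|$ makes the integrand bounded below by $(\eps/L_i)|r-k_i|^{-1}$ near $k_i$, so the integral diverges as $\rho^\eps_i(x)\to k_i$; this shows at once that the profile is defined on all of $\Omega_i=\eR_-$ and that it attains $k_i$ only as $x\to-\infty$. The remaining sign cases ($k_i>p$, and the outgoing roads with the roles of $x=0$ and $|x|\to+\infty$ exchanged) are treated identically, and the degenerate case $k_h=p$ yields the constant profile $\rho^\eps_h\equiv p$.

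It then remains to check the junction conditions, which hold by construction. Every component satisfies $\rho^\eps_h(0)=p$ with the same $p$, so the profiles match at the junction; and since each $\rho^\eps_h$ is monotone between $k_h$ and $p$, it stays in $[\min\{k_h,p\},\max\{k_h,p\}]\subseteq[0,R]$, giving $\vec{\rho^\eps}\in\Lp\infty(\Gamma;[0,R]^{m+n})$. The flux carried by road $h$ equals the constant $f_h(k_h)$, so the junction balance $\sum_{i=1}^m f_i(k_i)=\sum_{j=m+1}^{m+n}f_j(k_j)$ is exactly the conservation relation built into $\vec k\in\mathcal{G}_{VV}$ (recall $G_i(k_i,p)=f_i(k_i)$ and $G_j(p,k_j)=f_j(k_j)$). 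No further compatibility needs to be imposed.

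The one genuinely delicate point, and the reason the statement is restricted to $\mathcal{G}_{VV}^o$ rather than to all of $\mathcal{G}_{VV}$, is the absence of intermediate equilibria: were $f_h(s)=f_h(k_h)$ to hold for some $s$ strictly between $k_h$ and $p$, the trajectory would stall at $s$ and no profile joining $k_h$ to $p$ could exist. The strict Oleinik-type inequalities in \eqref{eq:GVV-o} are precisely what rules this out, so the substance of the argument is to verify that they translate into the sign and monotonicity statements used above; the rest is the routine scalar ODE bookkeeping sketched here.
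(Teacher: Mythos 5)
Your proof is correct and follows essentially the same route as the paper's: the same choice of $p$ from the definition of $\mathcal{G}_{VV}^o$, the same integration of \eqref{eq:steadyvisc} to the first-order ODE $\eps(\rho^\eps_h)_x = f_h(\rho^\eps_h)-f_h(k_h)$, and the same separation-of-variables construction of the monotone profile, with the strict inequalities in \eqref{eq:GVV-o} ruling out intermediate equilibria exactly as in the paper's observation that $k_h$ is the only zero of $f_h(\cdot)-f_h(k_h)$ on $I[k_h,p]$. Your explicit verification, via the Lipschitz bound, that the implicit integral diverges as $\rho^\eps_h\to k_h$ makes precise a point the paper's proof leaves implicit.
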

\begin{proof}
  We take the value $p\in[0,R]$ that ensures that $\vec k\in \mathcal{G}_{VV}^o$, according to the definition \eqref{eq:GVV-o} of $\mathcal{G}_{VV}^o$.
  We consider here the case $h\leq m$, the other case being analogous. Let us integrate both sides of \eqref{eq:steadyvisc} on $]-\infty, x]$
  \begin{equation}\label{eq:int1}
    \eps(\rho^\eps_{i})_x (x) = F_i(\rho^\eps_i(x)),\;\; \text{where}\; F_i:\rho\in [0,R] \mapsto f_i (\rho) -f_i(k_i),
  \end{equation}
  being understood that one should have $(\rho^\eps_i)_x\to 0$ as $x\to -\infty$.
  Observe that by the definition of $\mathcal G_{VV}^o$, $k_i$ is the only zero of $F_i$ on $I[k_i,p]$. Assume, for the sake of being definite, that $k_i<p$ (the case $k_i>p$ is analogous, while in the case $k_i=p$ we have the obvious solution $\rho^\eps_i=k$).
  Then, the map
  $$
  P: ]k_i,p] \to ]-\infty,0],\;\; r\mapsto - \int_{r}^p \frac{\eps}{f_i (s) -f_i(k_i)}\,ds
  $$
  is well defined and strictly increasing, i.e., it admits the inverse $P^{-1}$ satisfying $P^{-1}(0)=p$, $\lim_{x\to -\infty} P^{-1}(x)=k_i$,
  $\lim_{x\to -\infty} (P^{-1})'(x)=0$. We find that $\rho^\eps_i$ solves \eqref{eq:int1} if and only if
\begin{equation}
 -x= \int_x^0 \frac{\eps\rho^\eps_{i,x}  }{f_i (\rho^\eps_i) -f_i(k_i)}\,dx =
 -P(\rho^\eps_i),
\end{equation}
i.e. $\rho^\eps_i=P^{-1}(x)$ is the required solution.
\end{proof}

Following \cite{coclitegaravello}, we consider the following parabolic regularization of the initial boundary value problem \eqref{eq:basic}
\begin{equation}
  \label{eq:CLv}
  \begin{cases}
    \rho^\eps_{h,t} + f(\rho^\eps_{h})_x = \eps \rho^\eps_{h,xx}, &\quad     t>0, \, x\in\Omega_h,\,  h \in \{1,...,m+n\},\\
    \rho^\eps_{h}(t,0) = \rho^\eps_{h'}(t,0), &\quad t>0,\, h,\,h'\in\{1,...,m+n\},\\
    \sum \limits_{i=1}^m \left( f(\rie(t,0)) - \eps \rho^\eps_{i,x}(t,0) \right) &\quad
    {} \\
    = \sum \limits_{j=m+1}^{m+n} \left( f(\rje(t,0)) - \eps \rho^\eps_{j,x}(t,0)     \right), &\quad t>0,\\
    \rho^\eps_{h}(0,x) = u_{h,\eps}^0(x), &\quad x\in\Omega_h, \,  h \in \{1,...,m+n\},
  \end{cases}
\end{equation}
where $\eps>0$.
Note that, in the spirit of \eqref{conservation}, the third and fourth lines of \eqref{eq:CLv} give the mass conservation
at the junction, since the sum of the incoming parabolic fluxes
is equal to the sum of the outgoing parabolic ones.
On the approximated initial conditions we assume that
\begin{equation}
  \label{eq:assiniteps}
  \begin{split}
    u^0_{h,\eps}\in \Wp{2,1}(\Omega_h)\cap \Cp\infty(\Omega_h),&\qquad     0\le u^0_{h,\eps}\le R,\\
    u^0_{h,\eps}\longrightarrow u^0_{h},&\qquad     \text{a.e. and in $\Lp{\text{\rm $p$}}(\Omega_h)$, $1\le p<\infty$},\quad\text{as $\eps\to0$},\\
    \norm{u^0_{h,\eps}}_{\Lp1(\Omega_h)}\le \norm{u^0_{h}}_{\Lp1(\Omega_h)},&\qquad
    \norm{(u^0_{h,\eps})_x}_{\Lp1(\Omega_h)}\le TV(u^0_{h}),\qquad \eps\norm{(u^0_{h,\eps})_{xx}}_{\Lp1(\Omega_h)}\le C_0,
  \end{split}
\end{equation}
for each $\eps>0,\,h\in\{1,...,m+n\}$, where $C_0$ is a positive constant independent on $\eps,\,h$.

First, notice that profiles constructed in Proposition~\ref{steadyviscprofiles} are solutions of \eqref{eq:CLv}. The obvious scaling property of these profiles, that we will denote $\vec k^\eps$ in the sequel, ensures the convergence of $\vec k^\eps(x)=\vec k^1(\frac x\eps)\to \vec k$ as $\eps \to 0$, for all $x\neq 0$. This readily yields a wide family of vanishing viscosity limits.
\begin{corollary}\label{cor:profiles}
  Any $\vec k\in \mathcal{G}_{VV}^o$ can be obtained as the limit in the $\Lploc1$ sense, as $\eps\to 0$, of a family $\vec k^\eps$ of stationary solutions of \eqref{eq:CLv}.
\end{corollary}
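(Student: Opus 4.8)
The plan is to read the family $\vec k^\eps$ directly off the stationary profiles furnished by Proposition~\ref{steadyviscprofiles}, using the self-similar scaling of the steady equation. First I would fix $\vec k\in\mathcal G_{VV}^o$ together with the associated value $p\in[0,R]$ from the definition \eqref{eq:GVV-o}, and denote by $\vec k^1=(\rho^1_1,\ldots,\rho^1_{m+n})$ the solution of \eqref{eq:steadyvisc} corresponding to $\eps=1$, so that $f_h(\rho^1_h)_x=(\rho^1_h)_{xx}$ on $\Omega_h$, $\rho^1_h(0)=p$ and $\lim_{|x|\to+\infty}\rho^1_h(x)=k_h$. Setting $k^\eps_h(x):=\rho^1_h(x/\eps)$, a chain-rule computation gives $f_h(k^\eps_h)_x=\frac1\eps[f_h(\rho^1_h)]'(x/\eps)=\frac1\eps(\rho^1_h)''(x/\eps)=\eps\,(k^\eps_h)_{xx}$ on $\Omega_h$, with $k^\eps_h(0)=p$ and $\lim_{|x|\to+\infty}k^\eps_h(x)=k_h$; hence $\vec k^\eps$ is exactly the profile of Proposition~\ref{steadyviscprofiles} at parameter $\eps$, and it is a time-independent candidate solution of \eqref{eq:CLv}.

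Next I would check that each $\vec k^\eps$ solves the stationary version of \eqref{eq:CLv}. Time-independence makes the evolution equation hold trivially, and the junction continuity condition holds because $k^\eps_h(0)=p$ for every $h$. The only point that uses the germ structure is the conservativity of the parabolic fluxes: integrating \eqref{eq:steadyvisc} as in \eqref{eq:int1} yields $\eps\,(k^\eps_h)_x(x)=f_h(k^\eps_h(x))-f_h(k_h)$ on each road, whence at $x=0$
\begin{equation*}
f_h(k^\eps_h(0))-\eps\,(k^\eps_h)_x(0)=f_h(p)-\bigl(f_h(p)-f_h(k_h)\bigr)=f_h(k_h).
\end{equation*}
Summing over the incoming and the outgoing roads, the balance required in \eqref{eq:CLv} reduces to $\sum_{i=1}^m f_i(k_i)=\sum_{j=m+1}^{m+n}f_j(k_j)$, which is precisely the membership relation $\vec k\in\mathcal G_{VV}$ read through \eqref{definitiongerm}, since there $G_i(k_i,p)=f_i(k_i)$ and $G_j(p,k_j)=f_j(k_j)$.

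Finally I would pass to the limit $\eps\to0$. For every fixed $x\neq0$ the rescaled argument $x/\eps$ tends to $-\infty$ on the incoming roads (where $x<0$) and to $+\infty$ on the outgoing roads (where $x>0$); by the prescribed behaviour of $\vec k^1$ at infinity this gives the pointwise convergence $k^\eps_h(x)=\rho^1_h(x/\eps)\to k_h$ for a.e.\ $x$. Since all profiles take values in the compact interval $[0,R]$, dominated convergence upgrades this to convergence in $\Lploc1(\Omega_h)$, which is the assertion of the corollary.

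This computation is entirely routine, and no genuine obstacle arises at the level of the corollary: the only conceptual point—also the reason the convergence is merely local in $L^1$—is that the family $\vec k^\eps$ develops a boundary layer of width $O(\eps)$ near $x=0$, across which $k^\eps_h$ sweeps monotonically from $k_h$ to $p$. This layer carries no mass in the limit and is exactly what relaxes the strong junction coupling of \eqref{eq:CLv} into the $\mathcal G_{VV}$ admissibility condition at $x=0$; the construction making this precise has already been carried out in Proposition~\ref{steadyviscprofiles}, so the corollary is immediate from it.
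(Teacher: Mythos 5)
Your proof is correct and takes essentially the same route as the paper, which disposes of the corollary in the sentence preceding it: the scaling $\vec k^\eps(x)=\vec k^1(x/\eps)$ of the profiles from Proposition~\ref{steadyviscprofiles}, the observation that these are stationary solutions of \eqref{eq:CLv} (your verification that the constant parabolic flux $f_h(k^\eps_h)-\eps(k^\eps_h)_x\equiv f_h(k_h)$ reduces the junction balance to $\sum_i f_i(k_i)=\sum_j f_j(k_j)$, guaranteed by $\vec k\in\mathcal{G}_{VV}$, is exactly the implicit content of that remark), and pointwise convergence for $x\neq 0$ upgraded to $\Lploc1$ by boundedness in $[0,R]$. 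You have merely written out in full the details the paper leaves to the reader.
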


In general, using the theory of semigroups the authors of~\cite{coclitegaravello} proved the existence of  a unique solution
  $\vec{\rho^\eps}$  of~\eqref{eq:CLv} such that
  \begin{equation}\label{eq:regularity}
    \rho_h^\eps\in \Cp{}([0,\infty[;\Lp2(\Omega_h))    \cap \Lploc1(]0,+\infty[; \Wp{2,1}(\Omega_h)),    \qquad\eps>0,\quad h\in \{1,\ldots,m+n\},
  \end{equation}
  in particular
  \begin{equation}\label{eq:regularity-bis}
  (\rho_h^\eps)_t\in \Lploc1(]0,+\infty[,\Lp1(\Omega_h)), \qquad\eps>0,\quad h\in \{1,\ldots,m+n\}.
  \end{equation}
  Moreover, if we have two different initial conditions
  \begin{equation*}
    (\widetilde u_{1,\eps}^0,\ldots, \widetilde u_{m+n,\eps}^0),
    \qquad  (u_{1,\eps}^0,\ldots, u_{m+n,\eps}^0)
  \end{equation*}
  for~(\ref{eq:CLv}) satisfying~(\ref{eq:assiniteps}), then
  the corresponding solutions to~(\ref{eq:CLv})
  \begin{equation*}
    (\widetilde\rho_1^\eps,\ldots, \widetilde\rho_{m+n}^\eps),
    \qquad    (\rho_1^\eps,\ldots, \rho_{m+n}^\eps)
  \end{equation*}
 are stable in the following sense
  \begin{equation}
    \label{eq:stability}
    \begin{split}
      \sum\limits_{h=1}^{m+n}&\norm{\rhe(t,\cdot)-\widetilde{\rho}^\eps(t,\cdot)}_{\Lp1(\Omega_h)}
      \le \sum \limits_{h=1}^{m+n}
      \norm{u_{h,\eps}^0-\widetilde u^0_{h,\eps}}_{\Lp1(\Omega_h)},
    \end{split}
  \end{equation}
for every $t \ge 0$.

The compactness argument of \cite{coclitegaravello} is based on the compensated compactness theory \cite{TartarI} and the following
a priori estimates
\begin{align}
\label{eq:linfty}
&0\le \rho^\eps_{h}\le R,\qquad h\in\{1,\ldots,m+n\},\\
\label{eq:l1}
&\sum\limits_{h=1}^{m+n}\norm{\rhe(t,\cdot)}_{\Lp1(\Omega_h)}\le\sum\limits_{h=1}^{m+n}\norm{u^0_{h}}_{\Lp1(\Omega_h)},\\
\label{eq:l2}
&\sum\limits_{h=1}^{m+n}\norm{\rhe(t,\cdot)}_{\Lp2(\Omega_h)}^2
+2\eps\int_0^t\left(\sum\limits_{h=1}^{m+n} \norm{\rho^\eps_{h,x}(s,\cdot)}_{\Lp2(\Omega_h)}^2\right)ds\\
&\notag\qquad\qquad\qquad\le\sum\limits_{h=1}^{m+n}\norm{u_{h,\eps}^0}_{\Lp2(\Omega_h)}^2+2|n-m|\max_h\norm{f_h}_{\Wp{1,\infty}(0,R)}t,\\
\label{eq:l1t}
&\sum\limits_{h=1}^{m+n}\norm{\rho^\eps_{h,t}(t,\cdot)}_{\Lp1(\Omega_h)}\le(m+n)C_0+\max_h\norm{f_h'}_{\Lp\infty(0,R)}\sum\limits_{h=1}^{m+n}TV(u^0_{h}),
 \end{align}
for every $t\ge0$ and $\eps>0$.

The main result in \cite{coclitegaravello} shows that there exist a sequence $\{\eps_\ell\}_{\ell\in\N}\subset(0,\infty)$, $\eps_\ell\to 0$
  and  a solution $\vec{\rho}$   of~\eqref{eq:basic}, in the sense of
  Definition~\ref{solutionjunction}, such that
    \begin{align}
    \label{eq:comp3-th}
    & \rho^{\eps_\ell}_h \longrightarrow \rho_h,\qquad \textrm{a.e. and in } \Lploc{p}(\eR_+\times\Omega_h),\,1\le p<\infty,
  \end{align}
for every $h\in\{1,\ldots,m+n\}$, where $\vec{\rho^{\eps_\ell}}$ is the corresponding solution of \eqref{eq:CLv}.

\begin{remark}\label{CG-extended}
  Actually these results were proved in \cite{coclitegaravello} in the case there all the functions $f_h$ coincide, $h=1,\ldots,m+n$, and moreover the strict concavity of the flux function is assumed. Extension to different fluxes $f_h$ on different roads is straightforward. The strict concavity assumption can be replaced by the nonlinearity assumption $(NLD)$: e.g., the strong precompactness result of \cite{Panov-compactness} can be used on each road in the place of the compensated compactness method.
\end{remark}

Here we improve the result of \cite{coclitegaravello} showing the following:

\begin{theorem}\label{VVsolareGentropysol}
 Assume  \eqref{eq:assiniteps}. Let $\{\vec{\rho^\eps}\}_{\eps>0}$ be the family of solutions of \eqref{eq:CLv}.
 We have that
\begin{equation}
\label{eq:comp3-th.1}
\rho^{\eps}_h \longrightarrow \rho_h,\qquad \textrm{a.e. and in }\Lploc{\text{\rm $p$}}(\R_+\times\Omega_h),\,1\le p<\infty,
 \end{equation}
where $\vec{\rho}$ is the unique
admissible solution of \eqref{eq:basic} in the sense of Definitions~\ref{defBLN},~\ref{Gentropysolution},~\ref{adaptedentropysolution}.
\end{theorem}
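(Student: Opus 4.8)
The plan is to prove that every subsequential a.e.\ limit $\vec\rho$ of the viscous family $\{\vec{\rho^\eps}\}_{\eps>0}$ is \emph{the} admissible solution, and then to remove the subsequence by uniqueness. Since the a priori bounds \eqref{eq:linfty}--\eqref{eq:l1t} are uniform in $\eps$, the argument of \cite{coclitegaravello} (extended as in Remark~\ref{CG-extended}, using \cite{Panov-compactness} road by road) shows that from any sequence $\eps_\ell\to0$ one can extract a further subsequence along which $\rho^{\eps_\ell}_h\to\rho_h$ a.e.\ and in $\Lploc{p}$ for each $h$, the limit $\vec\rho$ satisfying the Kruzhkov inequalities \eqref{entropycondition} on every road. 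It then suffices to check that $\vec\rho$ obeys the adapted entropy inequalities \eqref{eq:adp}; by the last claim of Theorem~\ref{thm:TFAE} it is enough to do so for $\vec k\in\mathcal G_{VV}^o$, which is exactly the class for which Corollary~\ref{cor:profiles} supplies stationary viscous profiles.

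So fix $\vec k\in\mathcal G_{VV}^o$ and let $p$, $\vec k^\eps$ be the junction value and the stationary profile of Proposition~\ref{steadyviscprofiles}; recall that $\vec k^\eps$ is itself a solution of \eqref{eq:CLv} and that $\vec k^\eps\to\vec k$ in $\Lploc1$. On each road both $\rho^\eps_h$ and $k^\eps_h$ solve $\rho_{h,t}+f_h(\rho_h)_x=\eps\rho_{h,xx}$, so $w^\eps_h:=\rho^\eps_h-k^\eps_h$ satisfies the viscous Kato inequality
\[
\pt|w^\eps_h| + \px\Psi^\eps_h \leq 0,\qquad \Psi^\eps_h:=q_h(\rho^\eps_h,k^\eps_h)-\eps\,\px|w^\eps_h|,
\]
the discarded term being a nonnegative measure supported on $\{w^\eps_h=0\}$. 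The decisive structural fact is that the traces at $x=0$ are \emph{road-independent}: the coupling line of \eqref{eq:CLv} gives $\rho^\eps_h(t,0)=p^\eps(t)$ for all $h$, while $k^\eps_h(0)=p$ for all $h$; hence $w^\eps_h(t,0)=p^\eps(t)-p$ is independent of $h$ and $\sigma(t):=\sign(p^\eps(t)-p)$ is common to all roads.

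Summing over $h$, testing against $0\le\xi\in\mathcal D(]0,+\infty[\times\R)$ and integrating by parts isolates the junction contribution $\int_{\R_+}\xi(t,0)\bigl[\sum_{i}\Psi^\eps_i-\sum_{j}\Psi^\eps_j\bigr](t,0)\,dt$. Because $\sigma(t)$ factors out, this bracket equals $\sigma(t)$ times the incoming-minus-outgoing difference of the parabolic fluxes $f_h(\cdot)-\eps(\cdot)_x$ at $x=0$ for $\vec{\rho^\eps}$, minus the same difference for $\vec k^\eps$. Both $\vec{\rho^\eps}$ and $\vec k^\eps$ solve \eqref{eq:CLv}, so each of these two differences vanishes by the mass-conservation line of \eqref{eq:CLv}; thus the junction term is $0$. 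The only leftover boundary contribution comes from the viscous part of $\Psi^\eps_h$ and carries the factor $\eps\,|p^\eps(t)-p|\le\eps R$, hence is $O(\eps)$, while the bulk viscous term is bounded by $\eps R\|\xi_{xx}\|_{\Lp1}$. Letting $\eps_\ell\to0$ and using $\rho^{\eps_\ell}_h\to\rho_h$, $k^{\eps_\ell}_h\to k_h$ together with the continuity of $q_h$ yields precisely \eqref{eq:adp}.

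Consequently $\vec\rho$ is an adapted entropy solution, hence admissible by Theorem~\ref{thm:TFAE}; by Proposition~\ref{thm:uniqueness} it is unique and therefore independent of the chosen subsequence, which upgrades subsequential convergence to convergence of the whole family and gives \eqref{eq:comp3-th.1}. I expect the main obstacle to be the vanishing of the junction boundary term: it is here that the road-independence of the two junction values $p^\eps(t)$ and $p$ must be combined with the two \emph{separate} conservation relations carried by $\vec{\rho^\eps}$ and by $\vec k^\eps$, and it is exactly this cancellation that encodes the admissibility germ $\mathcal G_{VV}$.
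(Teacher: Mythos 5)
Your proposal is correct and is essentially the paper's own proof: compactness of the viscous family from \cite{coclitegaravello}, reduction via the last claim of Theorem~\ref{thm:TFAE} to the adapted entropy inequalities \eqref{eq:adp} for $\vec k\in\mathcal G_{VV}^o$, use of the stationary viscous profiles of Proposition~\ref{steadyviscprofiles}/Corollary~\ref{cor:profiles}, cancellation of the junction terms through the road-independence of the traces $\rho^\eps_h(t,0)$ and $k^\eps_h(0)$ combined with the two separate conservation relations in \eqref{eq:CLv}, and finally removal of the subsequence by uniqueness (Proposition~\ref{thm:uniqueness}). The only minor technical divergence is in disposing of the viscous remainder: you integrate by parts a second time (getting $O(\eps)$ bulk and boundary terms bounded via $\norma{\xi_{xx}}_{\Lp1}$ and $\eps R$), whereas the paper invokes the a priori gradient estimate \eqref{eq:l2}; both work, and like the paper you gloss over the negligible set $\{t: p^\eps(t)=p\}$ where the common sign factor degenerates but the junction bracket still has the favorable sign.
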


\begin{proof}
Let $\{\vec{\rho\,}^{\eps_\ell}\}_{\ell\in\N}$ be the solutions of \eqref{eq:CLv} converging to $\vec{\rho}$ as in \eqref{eq:comp3-th}.
 According to Theorem~\ref{thm:TFAE}, it is enough to justify that $\vec \rho$ satisfies the per road Kruzhkov inequalities \eqref{entropycondition}
for all $k\in [0,R]$ and the adapted entropy inequalities \eqref{eq:adp} for all $\vec k\in \mathcal G_{VV}^o$. The first claim is classical, see, e.g., \cite{coclitegaravello}. We only need to justify the second claim.

According to Corollary~\ref{cor:profiles}, given $\vec k \in \mathcal{G}_{VV}^o$ there exist $\vec k^\eps$ stationary viscosity profiles converging to $\vec k$, as $\eps\to 0$. Now, arguing as in \cite[p.\,1773]{coclitegaravello} but inserting in addition a non-negative test function $\xi\in\mathcal{D}(]0,+\infty[\times\eR)$, we find the following Kato inequality:
 \begin{equation}
 \label{eq:adp-epsilon}
     \sum_{h=1}^{m+n}\left(\int_{\eR_+}\int_{\Omega_h} \left\{|\rho^{\eps_\ell}_h - k^{\eps_\ell}_h|\xi_{t} +q_h(\rho^{\eps_\ell}_h ,k^{\eps_\ell}_h)\xi_{x}+{\eps_\ell}|\rho_h^{\eps_\ell} - k^{{\eps_\ell}}_h|_{x}\xi_{x} \right\}\,dx\,dt \right)
     \geq 0.
 \end{equation}
Let us give the details of the calculation. Bearing in mind the regularity \eqref{eq:regularity},\eqref{eq:regularity-bis} of solutions and
the Lipschitz regularity of $f_h$, it is a classical matter to obtain the per road Kato inequalities:
\begin{equation}
\label{eq:GMCbasic}
\int_{\eR_+}\!\!\int_{\Omega_h}\left(|\rho_h^{\eps_\ell} - k^{{\eps_\ell}}_h|\xi_{t}
+q_h(\rho^{\eps_\ell}_h ,k^{{\eps_\ell}}_h)\xi_{x}+{\eps_\ell}|\rho_h^{\eps_\ell} - k^{{\eps_\ell}}_h|_{x}\xi_{x}\right) \,dx\,dt \ge 0,
\end{equation}
$h\in\{1,\ldots,m+n\}$, for all $\xi\in \mathcal{D}(]0,+\infty[\times(\eR\setminus\{0\}))$. Moreover, bearing in mind the existence
of strong traces of $q_h(\rho^{\eps_\ell}_h ,k^{{\eps_\ell}}_h)+{\eps_\ell}|\rho_h^{\eps_\ell} - k^{{\eps_\ell}}_h|_{x}$ as $x\to 0$,
using the truncations defined by \eqref{eq:truncation-test}, with the same argument as in the proof of Proposition~\ref{thm:uniqueness}
we can generalize \eqref{eq:GMCbasic} to test functions $\xi$ not necessarily vanishing near the junction; the appropriate boundary terms appear. Summing up the resulting inequalities, and using the conservativity conditions contained in \eqref{eq:CLv} both for solutions $\vec{\rho\,}^{\eps_\ell}$ and $\vec{k}^{\eps_\ell}$, we find\begin{align*}
0\ge
&-\sum_{h=1}^{m+n}\int_{\eR_+}\!\!\int_{\Omega_h}\left(|\rho_h^{\eps_\ell} - k^{{\eps_\ell}}_h|\xi_{t}
+q_h(\rho^{\eps_\ell}_h ,k^{{\eps_\ell}}_h)\xi_{x}+{\eps_\ell}|\rho_h^{\eps_\ell} - k^{{\eps_\ell}}_h|_{x}\xi_{x}\right) \,dx\,dt\\ 
&+\sum_{h=1}^{m}\int_{\eR_+}\left(q_h(\rho^{\eps_\ell}_h(t,0) ,k^{{\eps_\ell}}_h)\xi(t,0)-{\eps_\ell}|\rho_h^{\eps_\ell}(t,0) - k^{{\eps_\ell}}_h|_{x}\xi(t,0)\right) \,dt\\
&-\sum_{h=m+1}^{m+n}\int_{\eR_+}\left(q_h(\rho^{\eps_\ell}_h(t,0) ,k^{{\eps_\ell}}_h)\xi(t,0)-{\eps_\ell}|\rho_h^{\eps_\ell}(t,0) - k^{{\eps_\ell}}_h|_{x}\xi(t,0)\right) \,dt\\
=&-\sum_{h=1}^{m+n}\int_{\eR_+}\!\!\int_{\Omega_h}\left(|\rho_h^{\eps_\ell} - k^{{\eps_\ell}}_h|\xi_{t}
+q_h(\rho^{\eps_\ell}_h ,k^{{\eps_\ell}}_h)\xi_{x}+{\eps_\ell}|\rho_h^{\eps_\ell} - k^{{\eps_\ell}}_h|_{x}\xi_{x}\right) \,dx\,dt\\
&+\int_{\eR_+} \sgn{\rho_h^{\eps_\ell}(t,0) - k^{{\eps_\ell}}_h}   \left(
\sum_{i=1}^{m}(f_i(\rho^{\eps_\ell}_i(t,0)) -{\eps_\ell}\rho_{i,x}^{\eps_\ell}(t,0))\right.\\
&\qquad\left.-\sum_{j=m+1}^{m+n}(f_j(\rho^{\eps_\ell}_j(t,0)) -{\eps_\ell}\rho_{j,x}^{\eps_\ell}(t,0))\right) \xi(t,0)\,dt\\
&-   \left(\sum_{i=1}^{m}f_i(k^{{\eps_\ell}}_h) -\sum_{j=m+1}^{m+n}f_j(k^{\eps_\ell}_j)\right) \int_{\eR_+} \sgn{\rho_h^{\eps_\ell}(t,0) - k^{{\eps_\ell}}_h}\xi(t,0)\,dt\\
=&-\sum_{h=1}^{m+n}\int_{\eR_+}\!\!\int_{\Omega_h}\left(|\rho_h^{\eps_\ell} - k^{{\eps_\ell}}_h|\xi_{t}
+q_h(\rho^{\eps_\ell}_h ,k^{{\eps_\ell}}_h)\xi_{x}+{\eps_\ell}|\rho_h^{\eps_\ell} - k^{{\eps_\ell}}_h|_{x}\xi_{x}\right) \,dx\,dt.
 \end{align*}

 Passing to the limit in \eqref{eq:adp-epsilon} as $\eps_\ell\to 0$, keeping in mind the second term of estimate \eqref{eq:l2}, we find inequality \eqref{eq:adp}. This concludes the proof.
\end{proof}

\bibliographystyle{acm}
    \bibliography{biblio-ACDjunction}

\end{document}